\documentclass[article]{amsart}
\usepackage{cases}
\usepackage{caption}
\usepackage{amsmath, amsfonts, pifont, amssymb}
\usepackage{verbatim}
\usepackage[bookmarks=true]{hyperref}
\usepackage{tikz}
\usetikzlibrary{positioning}
\usetikzlibrary{calc}
\usetikzlibrary{snakes,arrows}
\usetikzlibrary{quotes,angles}
\usepackage{geometry}
\usepackage{color}
\usepackage[nameinlink,noabbrev]{cleveref}
\newtheorem{theorem}{Theorem}[section]

\newtheorem{lemma}[theorem]{Lemma}
\newtheorem{proposition}[theorem]{Proposition}
\newtheorem{corollary}[theorem]{Corollary}

\newcommand{\R}{\mathbb{R}}

\newcommand{\C}{\mathbb{C}}

\newcommand{\beq}{\begin{equation}}
	\newcommand{\eeq}{\end{equation}}
\newcommand{\beqq}{\begin{equation*}}
	\newcommand{\eeqq}{\end{equation*}}

\newcommand{\bfu}{\mathbf{u}}
\theoremstyle{definition}
\newtheorem{definition}[theorem]{Definition}

\theoremstyle{remark}
\newtheorem{remark}[theorem]{Remark}

\numberwithin{equation}{section}
\newcommand{\T}{\mathbb{T}}
\newcommand{\Z}{\mathbb{Z}}
\newcommand{\dd}{\,\mathrm d}
\newcommand{\abs}[1]{\lvert#1\rvert}
\newcommand{\norm}[1]{\|#1\|}

\DeclareDocumentCommand{\abs}{s m}{
	\operatorname{}
	\IfBooleanTF{#1}{#2}{\left|#2\right|}}

\DeclareDocumentCommand{\norm}{s m}{
	\operatorname{}
	\IfBooleanTF{#1}{#2} {\left\| #2\right\|}}

\DeclareDocumentCommand{\inner}{s m}{
	\operatorname{}
	\IfBooleanTF{#1}{#2}{\left \langle#2\right \rangle}}

\DeclareDocumentCommand{\parenthese}{s m}{
	\operatorname{}
	\IfBooleanTF{#1}{#2}{\left(#2\right)}}

\DeclareDocumentCommand{\square}{s m}{
	\operatorname{}
	\IfBooleanTF{#1} {#2}{\left[#2\right]}}

\DeclareDocumentCommand{\bracket}{s m}{
	\operatorname{}
	\IfBooleanTF{#1}{#2}{\left\{#2\right\}}}

\begin{document}
	\author{Yilin Song}
	\address{Yilin Song
	\newline \indent  Graduate School of China Academy of Engineering Physics,
	 \newline \indent Beijing 100088,\ People’s Republic of China.}
	 \email{songyilin21@gscaep.ac.cn}
     	\author{Yan Xue}
	\address{Yan Xue
	\newline \indent Laboratory of Mathematics and Complex Systems,
\newline \indent Ministry of Education,
\newline \indent School of Mathematical Sciences,
\newline \indent Beijing Normal University,
\newline \indent Beijing 100875,\ People’s Republic of China.}
	 \email{yanxue0789@gmail.com}
    	\author{Ruixiao Zhang}
	\address{Ruixiao Zhang
	\newline \indent Department of Mathematics,
\newline \indent Northeastern University,
\newline \indent Shenyang 110819,\ People’s Republic of China.}
	 \email{zhangrx@mail.neu.edu.cn} 
	
\title[quadratic Schr\"odinger system]{ The quadratic Schr\"odinger system on $\mathbb{T}^6$
 }
\begin{abstract}
	In this article,  we study the global well-posedness for 
      the quadratic Schr\"odinger system on $\mathbb{T}^6$. 
      To achieve this, we utilize the concentration-compactness/rigitdity argument initiated by Ionescu and Pausader \cite{IPT3}. 
      Our result is also regarded as a periodic analogue of Gao-Meng-Xu-Zheng \cite{GMXZ}.
\end{abstract}
\maketitle
\begin{center}
	\begin{minipage}{100mm}
		{ \small {{\bf Key Words:}  Quadratic Schr\"odinger system, nonlinear Schr\"{o}dinger equation, critical norm, concentration-compactness, 
            compact manifold.}
		}\\
		{ \small {\bf AMS Classification:}
			{35Q55, 35R01}
		}
	\end{minipage}
\end{center}
\tableofcontents
\section{Introduction}
In this article, let $\mathbb T^6=\mathbb R^6/(2\pi\mathbb Z)^6$, we consider
\begin{equation}\label{eq:system}
\begin{cases}
(i\partial_t+\Delta)u_1=-u_2\overline{u_1},\\
(i\partial_t+\kappa \Delta)u_2=-u_1^2,\\
(u_1(0),u_2(0))=(u_{1,0},u_{2,0})\in H^1(\mathbb T^6;\mathbb C^2).
\end{cases}
\end{equation}
When $\kappa=\frac{1}{2}$, we notice that
\eqref{eq:system} is left invariant under the
Galilean transformation
\[
\begin{pmatrix}
u_1(t,x)\\
u_2(t,x)
\end{pmatrix}
\longmapsto
\begin{pmatrix}
e^{ix\cdot\xi}e^{-it|\xi|^2}u_1(t,x-2t\xi)\\
e^{2ix\cdot\xi}e^{-2it|\xi|^2}u_2(t,x-2t\xi)
\end{pmatrix}
\]
for any $\xi\in\mathbb{R}^d$. In this situation, we say  that \eqref{eq:system} satisfies the
mass-resonance condition. In this paper, we set $\kappa =\frac{1}{2}$, i.e. mass-resonance. For this setting, one can refer to \cite{HOT2013} to get more details.

In vector expressions, we define the solution ${\bf u}:I\times\T^6\to\mathbb{C}^2$ by $\mathbf{u}=(u_1,u_2)$ and $F(\mathbf{u})=(u_1\overline{u_2},u_1^2)$.
Thus, we rewrite \eqref{eq:system} as
\begin{equation}\label{eq:system1}
\begin{cases}
(i\partial_t+\Delta)\mathbf{u}=-F(\mathbf{u}),\\
\mathbf{u}(0)=\mathbf{u}_0\in H^1(\mathbb T^6;\mathbb C^2).
\end{cases}
\end{equation}
\subsection{Background}
We consider the nonlinear Schrödinger equation
\begin{equation}\label{eq:nls}
\begin{cases}
(i\partial_t+\Delta)u=\mu |u|^{p-1}u,\qquad t \in \mathbb{R}^d,x\in \mathcal{M}^d,\\
u(0,x)=u_0(x) \in \dot{H}^s(\mathcal{M}^d)
\end{cases}
\end{equation}
where $\mathcal{M}^d$ is a compact manifold in dimension $d$. We mainly focusing on $\mathcal{M}= \mathbb{R}$ or $\mathbb{T}$ in this paper.
We call the \eqref{eq:nls} is focusing if $\mu =-1$ and defocusing otherwise. Under scaling $u(t,x) \mapsto u_{\lambda}:=\lambda^{\frac{2}{p-1}}u(\lambda^2t,\lambda x)$, we know that the critical exponent of \eqref{eq:nls} in $\mathbb{R}^d$ is 
\[s_c=\frac{d}{2}-\frac{2}{p-1},\]
which satisfies $\|u\|_{\dot{H}^{s_c}}=\|u_{\lambda}\|_{\dot{H}^{s_c}}$.
We continue to use this exponent in $\mathbb{T}^d$. We call \eqref{eq:nls} is a critical problem if $s=s_c$, and sub-critical if $s <s_c$.
By this convention, it is easy to check that \eqref{eq:system1} is $\dot{H}^1(\mathbb{T}^6)$-critical. In this paper, we focus on global wellposedness of $\eqref{eq:system1}$. Before starting with this, 
let's recall some history of the nonlinear Schrödinger equation and the quadratic Schr\"odinger system.
\subsubsection{Well-posedness results in $\mathbb R^d$}
For the local well-posedness theory of nonlinear Schrödinger equations on \(\mathbb R^d\), one can refer to Cazenave \cite{Cazenave2003}. For the defocusing energy-critical NLS on $\mathbb R^d$,
global well-posedness and scattering for arbitrary initial data
in $\dot H^1(\mathbb R^d)$ are known in dimensions $d\ge3$.
Early results for radial data were obtained by Bourgain
\cite{Bourgain1999}.
Without the radial assumption, the problem was resolved by
Colliander, Keel, Staffilani, Takaoka, and Tao \cite{CKSTT2008}
in dimension $d=3$, Ryckman and Vi\c{s}an \cite{RyckmanVisan2007}
in dimension $d=4$, and Vi\c{s}an \cite{Visan2007}
in dimensions $d\ge 5$.
For the focusing energy-critical equation in the radial setting, Kenig and Merle \cite{KenigMerle2006} introduced the concentration-compactness and rigidity approach to prove scattering below the ground state. 
For nonradial data, global well-posedness and scattering below the
ground-state threshold were established by Killip and Vi\c{s}an
\cite{KillipVisan2010} in dimensions $d\ge5$ and by Dodson
\cite{Dodson2019} in dimension $d=4$. Very recently, Su and Zhao \cite{SuZhao2026} solved this problem in dimension $d=3$.

For the quadratic system, the scaling $\mathbf u(t,x)\mapsto\lambda^2\mathbf u(\lambda^2t,\lambda x)$ gives the critical index $s_c=d/2-2$. 
Thus, $d=4$ and $d=6$ correspond to the mass-critical and energy-critical problems, respectively. 
Hayashi, Ozawa and Tanaka \cite{HOT2013} established local well-posedness for the quadratic system in $L^2(\mathbb R^d;\mathbb C^2)$ for $1\le d\le4$ and in $H^1(\mathbb R^d;\mathbb C^2)$ for $1\le d\le6$. 

At critical regularity, Hayashi, Li, and Ozawa \cite{HLO2011} proved global existence and scattering for data in $H^{s_c}$ with sufficiently small $\dot H^{s_c}$ norm when $d\ge4$. 
In dimension four, Hayashi, Ozawa, and Tanaka \cite{HOT2013}
used a sharp Gagliardo--Nirenberg inequality to prove global
well-posedness in $H^1$ for initial data whose mass is below
the ground state.
Inui, Kishimoto, and Nishimura \cite{IKN2019} established
global well-posedness and scattering in $L^2$ under the same
mass condition.
In dimensions $d=5$, Hamano \cite{Hamano2018} proved scattering
in $H^1$ for the mass-resonant system without assuming radial
symmetry below the ground-state.
Under the same threshold conditions, Hamano, Inui and
Nishimura \cite{HIN2021} proved scattering for radial data
without mass-resonance condition.
Wang and Yang \cite{WangYang2019} obtained scattering for
nonradial data when $\kappa$ is sufficiently close to $\frac12$,
and for radial data for every $\kappa>0$, using the approach
of Dodson and Murphy \cite{DodsonMurphy2017,DodsonMurphy2018}.
Meng and Xu \cite{MengXu2021} gave an alternative proof under the mass-resonance  using the Dodson-Murphy strategy.
The energy-critical quadratic system  was studied by Gao, Meng, Xu and Zheng \cite{GMXZ}.
They proved global well-posedness and scattering in $\dot H^1(\mathbb R^6;\mathbb C^2)$ for data whose energy and kinetic energy are both below the ground state. 
The result holds for general data under mass resonance and for radial data without this restriction. 
\subsubsection{Well-posedness results in $\mathbb T^d$}
In the periodic setting, the lack of dispersive effect makes the local well-posedenss extremely difficult compared to the Eulicdean setting. So the main difficult is to establish Strichartz estimates on tours. A remarkable breakthrough is that Bourgain \cite{Bour1993} established the periodic Strichartz estimates using the by the analytical number theory, and obtain some relevant local well-posedness results. However, the Strichartz estimates in their paper are almost sharp  for lower dimensions $d=1,2$ up to the $\varepsilon$-loss and far from optimal in $d\geq3$. 
Then Bourgain and Demeter \cite{BD} proved the $\ell^2$ decoupling theorem, 
yielding the expected range of periodic Strichartz estimates with an arbitrarily small frequency loss in all dimensions $d\geq1$ on both rational and irrational tours. 
Recently, Killip and Vi\c{s}an \cite{KillipVisan2016} obtained scale-invariant estimates on rectangular torus above the critical Strichartz exponent. Relevant results can refer to \cite{DGG2017,FSWW2018,GOW2014,Strunk2014}.

Considering global well-posednesss of critical problems, there are a series of excellent results recently. For energy-critical NLS on the rational three-dimensional torus, Herr, Tataru, and Tzvetkov \cite{HerrT32011} proved global well-posedness for small $H^1$ data. 
Their proof combines a refined trilinear estimate with critical spaces constructed from $U^p$ and $V^p$, 
which was developed by Hadac, Herr, and Koch \cite{HadacKp22009}.These spaces are suitable to study critical problems. The corresponding estimates on irrational torus were further developed in \cite{GOW2014,Strunk2014}. For large data, Ionescu and Pausader \cite{IPT3} established global well-posedness for the defocusing energy-critical NLS on the rational torus $\mathbb T^3$, 
and Strunk \cite{Strunk2015} extended the result to general rectangular three-dimensional torus. The key is to use Euclidean scattering results to control concentrating solutions on the torus.
Related arguments were developed on $\mathbb R\times\mathbb T^3$ by Ionescu and Pausader \cite{IP2012Waveguide} and on hyperbolic space by Ionescu, Pausader and Staffilani \cite{IPS2012}. 
Yu and Yue \cite{YY} studied the defocusing quintic NLS on $\mathbb T^2$ and proved global existence provided the critical $H^{\frac{1}{2}}$ norm remains bounded throughout its maximal lifespan. For the focusing case, Yue \cite{Yue2021} proved global existence for the cubic NLS on $\mathbb T^4$ below the Euclidean threshold. 

In this paper, we study global well-posedness of the quadratic Schr\"odinger system.
\subsection{Main results}
First, let us recall some basic facts related to our problem. For the smooth solution to \eqref{eq:system1}, the mass, energy and kintic energy quantities can be defined as following,
\begin{align}
M(\mathbf{u})&=\|u_1\|_2^2+\|u_2\|_2^2,\label{mass}\\
E(\mathbf{u})&=\|\nabla u_1\|_2^2+\frac14\|\nabla u_2\|_2^2
       -\operatorname{Re}\int\overline{u_2}u_1^2,\label{energy}\\
H(\mathbf{u})&= \|\nabla u_1\|_2^2+\frac14\|\nabla u_2\|_2^2
\end{align}
By a simple computation, $M(\mathbf{u})$ and $E(\mathbf{u})$ is conserved if $\mathbf{u}_0$ is smooth enough.

Also, we have ground state for $\eqref{eq:system1}$:
\begin{equation}\label{ground} Q(x)=(1+|x|^2/24)^{-2},\qquad \mathbf{W}=(Q/\sqrt2,Q),\qquad x\in\mathbb R^6,\end{equation}
which means that $\mathbf{W}$ satisfies  $\Delta \mathbf{W} =-F(\mathbf{W}).$

In Euclidean space, we have classical Sobolev embedding. Let $C_6$ denote the optimal constant in the Euclidean
Sobolev inequality
$\|f\|_{L^3(\mathbb R^6)}
\le C_6\|\nabla f\|_{L^2(\mathbb R^6)}$, it satisfies $
C_6^{-6}=4E(\mathbf W)=\frac43H(\mathbf W).$
However, for $\mathbb{T}^6$, we have to modified it by adding a mass term. By the sharp Sobolev inequality on a compact manifold (see  Hebey--Vaugon
\cite{HV}), there exists
$c_0=c_0(\mathbb T^6)>0$ such that
\begin{equation}\label{eq:torus-sharp-sobolev}
\|f\|_{L^3(\mathbb T^6)}^2
\le C_6^2\left(
\|\nabla f\|_{L^2(\mathbb T^6)}^2
+c_0\|f\|_{L^2(\mathbb T^6)}^2
\right),
\qquad f\in H^1(\mathbb T^6).
\end{equation}
Now, we can state the main result.
\begin{theorem}\label{thm:main}
There exists $c_0=c_0(\mathbb T^6)>0$ to make \eqref{eq:torus-sharp-sobolev} hold. If
\begin{equation}\label{threshold}
\begin{split}
E(\mathbf{u}_0)+c_0 M(\mathbf{u}_0)&<E(\mathbf{W}),\\
H(\mathbf{u_0})+c_0 M(\mathbf{u}_0)&<H(\mathbf{W}),
\end{split}
\end{equation}
then \eqref{eq:system1} has a unique global strong solution
\[
\mathbf{u}\in C(\mathbb R;H^1(\mathbb T^6))\cap X^1_{\mathrm{loc}}(\mathbb R).
\]
\end{theorem}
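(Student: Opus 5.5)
The proof follows the Ionescu--Pausader concentration-compactness scheme on $\mathbb T^6$, with the Euclidean result of Gao--Meng--Xu--Zheng \cite{GMXZ} as input for concentrating profiles.

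\emph{Step 1: coercivity.} We first show that \eqref{threshold} is preserved by the flow and gives a uniform $H^1$ bound. By Hölder and \eqref{eq:torus-sharp-sobolev},
\[
\Bigl|\operatorname{Re}\int\overline{u_2}u_1^2\Bigr|\le \|u_2\|_3\|u_1\|_3^2 .
\]
Optimizing the cubic term against $\|\nabla u_1\|_2^2+\frac14\|\nabla u_2\|_2^2+c_0(\|u_1\|_2^2+\|u_2\|_2^2)$, exactly as in the Euclidean sharp inequality attained by $\mathbf W$, gives
\[
\Bigl|\operatorname{Re}\int\overline{u_2}u_1^2\Bigr|\le C\,\bigl(H(\mathbf u)+c_0M(\mathbf u)\bigr)^{3/2},
\]
where $C$ is the sharp constant with equality for $\mathbf W$ on $\mathbb R^6$. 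Set $y(t)=H(\mathbf u(t))+c_0M(\mathbf u(t))$ and $f(y)=y-Cy^{3/2}$, whose maximum is $E(\mathbf W)$, attained at $y=H(\mathbf W)$. Then $f(y(t))\le E(\mathbf u)+c_0M(\mathbf u)<E(\mathbf W)$. By continuity of $y(t)$ and conservation of mass and energy, the condition $y(0)<H(\mathbf W)$ propagates, giving $y(t)\le(1-\delta)H(\mathbf W)$ for some $\delta>0$. Consequently $E(\mathbf u)\gtrsim \|\mathbf u\|_{H^1}^2$, and $\sup_t\|\mathbf u(t)\|_{H^1}$ is bounded in terms of the data.

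\emph{Step 2: local theory and the critical element.} We establish local well-posedness, small-data global theory, and stability in the critical spaces $X^1$/$Y^1$ built from $U^2,V^2$. The key input is a bilinear estimate $\|\mathbf u\mathbf v\|_{N(I)}\lesssim \|\mathbf u\|_{X^1}\|\mathbf v\|_{Z'(I)}+\dots$, with $Z$ an $\ell^p L^p$-type norm controlled through Bourgain--Demeter Strichartz estimates \cite{BD}. This yields a blow-up criterion: the solution extends as long as the $Z$-norm is finite. We then argue by contradiction. Let $E^*$ be the supremal energy level $E(\mathbf u)+c_0M(\mathbf u)$ below which all data satisfying \eqref{threshold} give finite $Z$-norm on intervals of length $1$. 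Assume $E^*<E(\mathbf W)$ and take a sequence of solutions $\mathbf u_n$ with $\|\mathbf u_n\|_{Z(-T_n,T_n)}\to\infty$ at that level.

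\emph{Step 3: profile decomposition.} We apply the Ionescu--Pausader profile decomposition to $\mathbf u_n(0)$: one scale-one profile together with Euclidean profiles concentrating at scales $N_k\to\infty$, with orthogonal energies. For each Euclidean profile, \eqref{threshold} implies the $\mathbb R^6$ threshold of \cite{GMXZ}, because on small scales the mass term disappears and $c_0M\ge0$. The global scattering solutions from \cite{GMXZ} then transfer to the torus through the extinction lemma and the approximation of periodic solutions by Euclidean ones, yielding finite $Z$-norm on unit intervals. The mass-resonance condition $\kappa=\frac12$ is used at exactly this point, so that \cite{GMXZ} applies to nonradial profiles. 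If there were two or more nontrivial profiles, each would lie strictly below $E^*$, and stability would contradict the blow-up of $\mathbf u_n$. Hence there is a single scale-one profile, which produces $\mathbf u^*$ with $E(\mathbf u^*)+c_0M(\mathbf u^*)=E^*$ and infinite $Z$-norm on a unit interval.

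\emph{Step 4: contradiction at scale one.} At scale one the Strichartz norm over a finite interval is controlled by $\|\mathbf u\|_{L^\infty H^1}$. Combined with the subcritical local theory, this shows that $\mathbf u^*$ cannot blow up in finite time, which is the contradiction. Alternatively, one proves directly that the nonlinear profile exists on unit intervals with finite $Z$-norm, so that only concentrating profiles can cause blow-up. Either way, global existence in $X^1_{\mathrm{loc}}$ follows, and uniqueness comes from the local theory.

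The main obstacle is the threshold bookkeeping in Step 3. One has to check that the nonlinear energy and $H$ decouple along the profile decomposition, so that each Euclidean profile satisfies both inequalities of the \cite{GMXZ} threshold, and that the mass term does not break this decoupling. The cubic term must also be shown to decouple; this uses the orthogonality of the profiles together with $L^3$ smallness of the remainder, the latter coming from the refined Sobolev inequality.
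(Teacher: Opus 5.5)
Your Step 1 is correct. It is the trapping argument of \Cref{trap}, and your sharp-constant bookkeeping checks out. Steps 3 and 4, however, have a genuine gap, and it comes from working on unit time intervals.

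\Cref{prop:euclidean-profiles} and \Cref{lem:extinction} control a concentrating nonlinear profile only for $|t|\le\tau(\boldsymbol\phi)$. The extinction lemma gives smallness on $[TN_k^{-2},T^{-1}]$ and nothing later. After that the dispersed wave is a genuinely periodic object: the linear flow on $\mathbb T^6$ stops dispersing and partially refocuses, e.g.\ near rational multiples of $2\pi$. On that regime \cite{GMXZ} says nothing.

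So your claim ``yielding finite $Z$-norm on unit intervals'' is unjustified. As a result, the case $\mathbf g=0$ with a single Euclidean profile carrying all of $E^*$ is never excluded. Induction on energy does not apply to this case, and the Euclidean theory does not reach time $1$.

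Step 4 fails as well. The problem is $H^1$-critical, so there is no subcritical local theory. A bound on $\|\mathbf u\|_{L^\infty_tH^1_x}$ does not control $\|\mathbf u\|_{Z}$: the existence time in \Cref{prop:local-theory-h1} depends on $\|e^{i(t-t_0)\Delta}\mathbf u_0\|_{Z'}$, which is not uniform on $H^1$-bounded sets. Ruling out finite-time blow-up of a critical element would require precompactness of its orbit in $H^1$. Proving that means redoing the profile analysis on intervals shrinking to the blow-up time, where the single-Euclidean-profile case reappears.

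The missing idea is the Ionescu--Pausader device used in \Cref{Lambda}: measure the $Z$-norm only on intervals of length $\le\tau$, and let $\tau\to0$.
\begin{itemize}
\item Because the trapped region is invariant under the flow, $\Lambda(L,\tau+\sigma)\le\Lambda(L,\tau)+\Lambda(L,\sigma)$. In particular your $E^*$ equals $E_{\max}$, and you may choose the blow-up sequence on intervals $I_k$ with $|I_k|\to0$ from the start.
\item Then $I_k\subset(-\tau,\tau)$ for large $k$. Every Euclidean profile is controlled by \Cref{prop:euclidean-profiles} once it inherits the \cite{GMXZ} threshold, which your Step 3 bookkeeping provides.
\item The scale-one profile $\mathbf G$ is controlled by plain local theory on a fixed neighborhood of $0$.
\item Stability then contradicts $\|\mathbf u_k\|_{Z(I_k)}\to\infty$ in each of the four cases of \Cref{global-critical}.
\end{itemize}
No critical element or compactness argument is needed. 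Finiteness of $\Lambda_0(L)$ then gives global existence through the blow-up criterion of \Cref{prop:local-theory-h1}.
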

\subsection{Organization of the paper}
The remainder of this paper is organized as follows.
In \Cref{se2}, we introduce the function spaces and collect the
basic estimates used throughout the paper.
\Cref{se3} proves local
well-posedness and stability theory of \eqref{eq:system1} by establishing the multilinear estimates. In \Cref{se4}, we study Euclidean profiles and show that concentrating periodic solutions can be
approximated by rescaled Euclidean solutions.
\Cref{section5} proves the linear profile decomposition in
$H^1(\mathbb T^6)$ and the almost orthogonality of
nonlinear profiles.
Finally, in \Cref{se6}, we prove 
\Cref{thm:main} by a standard rigidity argument.

\section{Preliminaries}\label{se2}
In this section, we collect the basic definitions and properties of functional spaces, Strichartz estimates. Then, we use them to show the local well-posedness and stability of the quadratic system \eqref{eq:system1}.
\subsection{Definition of the strong solution}
For $\mathbf{u}=(u_1,u_2):I\times\mathcal M\to\mathbb C^2$, we write
\[
 \|\mathbf{u}\|_{L_t^qL_x^r(I\times\mathcal M)}
 =\left[\int_I\left(\int_{\mathcal M}|\bfu(t,x)|^r\,dx\right)^{q/r}dt\right]^{1/q},
 \qquad |\bfu|^2:=|u_1|^2+|u_2|^2.
\]
We set $\langle\xi\rangle=(1+|\xi|^2)^{1/2}$ and
\[
 \|\bfu\|_{H^s}^2=\sum_{j=1}^2\sum_{\xi\in\mathbb Z^6}
 \langle\xi\rangle^{2s}|\widehat u_j(\xi)|^2.
\]
The operators $P_N$ are Littlewood--Paley projections, which means that frequency of function is restrited to $\sim N$.

\begin{definition}[Solution]\label{solution}
Given an interval $I\subset\mathbb R$, a strong solution of $(i\partial_t+\Delta)\mathbf{u}=-F(\mathbf{u})$ is a function $\bfu\in C(I:H^1(\mathbb T^6))\cap X^1(I)$ such that, for every $t,s\in I$,
\[
 \bfu(t)=e^{i(t-s)\Delta}\bfu(s)+i\int_s^t e^{i(t-t')\Delta}F(\bfu(t'))\,dt'.
\]
\end{definition}

\subsection{Function spaces and linear Strichartz estimates}
In this subsection, we introduce the $X^s$ and $Y^s$ spaces based on the atomic spaces $U^p$ and $V^p$, which were originally developed by Tataru \cite{HadacKp22009,HerrT32011,HerrStri2014}. For convenience, we denote $\mathcal{H}$ the separable Hilbert space on $\C$ and also denote $\mathcal{Z}$ the set of finite partitions
$-\infty = t_0<t_1<\ldots<t_{K} = \infty$ on the real line. Furthermore, we set $v(\infty) := 0$ for any function $v:\R \rightarrow \mathcal{H}$.

We recall the definitions in \cite{HerrT32011} in the following:

\begin{definition}[$U^p$ space \cite{HerrT32011}]
	Let $1\leq p < \infty$ and functions $u: \R \rightarrow \mathcal{H}$. For sequence $\{t_k\}_{k=0}^K \in \mathcal{Z}$ and a sequence of functions $\{\phi_k\}_{k=0}^{K-1} \subset \mathcal{H}$ with $\sum_{k=0}^{K-1} \|\phi_k\|_{\mathcal{H}}^p = 1$ and $\phi_0 = 0$. A $U^p$-atom If a piecewise defined function $a : \mathbb{R} \to \mathcal{H}$ has the form
	\begin{equation*}
		a = \sum_{k=1}^K 1_{[t_{k-1}, t_k)} \phi_{k-1},
	\end{equation*}
	then we refer it as an $U^p$-atom. Then, we can define $U^p(\mathbb{R}, \mathcal{H})$ spaces as follows
	\begin{align*}
		u = \sum_{j=1}^{\infty} \lambda_{j} a_j,\qquad  \text{for}\hspace{1ex} a_j \hspace{1ex} \text{are}\hspace{1ex}   U^p\hspace{1ex} \text{-atoms}\  ,\quad \{\lambda_j\}_j \in \ell^1,\quad \|u\|_{U^p}<\infty,
	\end{align*}
	where
	\begin{align*}
		\|u\|_{U^p} : = \inf \bigg\{\sum_{j=1}^{\infty} |\lambda_j| : u = \sum_{j=1}^\infty \lambda_j a_j,\  \lambda_j \in \mathbb{C}\ \text{and}\hspace{1ex} a_j \hspace{1ex} \text{an}\hspace{1ex} U^p\ \text{atom}\bigg\}.
	\end{align*}
\end{definition}

\begin{definition}[$V^p$ spaces \cite{HerrT32011}]
	Let $1\leq p < \infty$ and functions $v:\R\rightarrow \mathcal{H}$. We also further assume that $v(\infty):=0$ and $v(-\infty) := \lim_{t\to -\infty} v(t)$ exists. Then, we define $V^p(\R,\mathcal{H})$ space such that
	\begin{align*}
		\|v\|_{V^p} : = \sup_{\{t_k\}_{k=0}^K\in \mathcal{Z}} \bigg(\sum_{k=1}^K \|v(t_k)-v(t_{k-1})\|^p_\mathcal{H}\bigg)^\frac{1}{p},
	\end{align*} 
	is empty.
	Likewise, we denote $V_{-}^p$ the closed subspace of all $v \in V^p$ with $\lim_{t\to -\infty}v(t) = 0$. We also denote $V_{-, rc}^p$ means all right-continuous $V_{-}^p$ functions.
\end{definition}

\begin{proposition}[Embeding properties \cite{HerrT32011}]\label{fml-pre-emb}
	For $1\leq p \leq q < \infty$, we have
	\begin{equation}
		U^p(\mathbb{R}, \mathcal{H}) \hookrightarrow U^q(\mathbb{R}, \mathcal{H}) \hookrightarrow  L^{\infty}(\mathbb{R},\mathcal{H}),
	\end{equation}
	and
	\begin{equation}
		U^p(\mathbb{R}, \mathcal{H}) \hookrightarrow V^p_{-,rc} (\mathbb{R}, \mathcal{H}) \hookrightarrow U^q(\mathbb{R}, \mathcal{H}).
	\end{equation}
\end{proposition}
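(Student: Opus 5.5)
The plan is to prove the two chains of embeddings separately, working directly from the atomic definition of $U^p$ and the variation definition of $V^p$, in the manner of Hadac--Herr--Koch \cite{HadacKp22009}.

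\emph{First chain.} For $U^p\hookrightarrow U^q$ with $p\le q$, I will show that every $U^p$-atom is a $U^q$-atom up to normalization. If $a=\sum_k 1_{[t_{k-1},t_k)}\phi_{k-1}$ with $\sum_k\|\phi_k\|_{\mathcal H}^p=1$, then $\|\phi_k\|_{\mathcal H}\le1$ for each $k$. Hence $\sum_k\|\phi_k\|^q_{\mathcal H}\le\sum_k\|\phi_k\|_{\mathcal H}^p=1$, since $\ell^p\subset\ell^q$ with norm one. Rescaling $a$ by $(\sum_k\|\phi_k\|^q)^{-1/q}\ge1$ turns it into a $U^q$-atom, so $\|a\|_{U^q}\le1$. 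Taking infima over atomic decompositions then gives $\|u\|_{U^q}\le\|u\|_{U^p}$. For $U^q\hookrightarrow L^\infty$, note that any atom satisfies $\sup_t\|a(t)\|_{\mathcal H}\le(\sum\|\phi_k\|^q)^{1/q}=1$. Summing over an atomic decomposition with coefficients $\lambda_j$ gives $\|u\|_{L^\infty}\le\sum|\lambda_j|$, and taking the infimum gives $\|u\|_{L^\infty}\le\|u\|_{U^q}$.

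\emph{Second chain, first embedding.} For $U^p\hookrightarrow V^p_{-,rc}$, I first check this on atoms. An atom is right-continuous because its pieces are indicators of half-open intervals $[t_{k-1},t_k)$. It vanishes at $-\infty$ because $\phi_0=0$, and I use the convention $v(\infty)=0$. For any partition $\{s_m\}$, each increment $a(s_m)-a(s_{m-1})$ is either zero or a difference $\phi_j-\phi_i$ with $i<j$, where consecutive nonzero increments chain through distinct jump points. Applying $|x-y|^p\le 2^{p-1}(|x|^p+|y|^p)$ and the fact that each $\phi_k$ is used at most twice, I get $\|a\|_{V^p}^p\le 2^p\sum\|\phi_k\|^p$, so $\|a\|_{V^p}\le 2$. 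I then sum over atoms. Here $V^p_{-,rc}$ is a Banach space, and the defining properties (right-continuity, vanishing limit at $-\infty$) are preserved under the absolutely convergent sums, because convergence in $V^p$ dominates uniform convergence. This gives $\|u\|_{V^p}\le2\|u\|_{U^p}$.

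\emph{Second chain, second embedding.} The embedding $V^p_{-,rc}\hookrightarrow U^q$ for $p<q$ is the main obstacle; the case $p=q$ fails, so strict inequality is needed there. I will follow the dyadic construction of \cite{HadacKp22009}. Fix $v\in V^p_{-,rc}$ normalized so that $\|v\|_{V^p}=1$. For each $n\ge0$, I construct a partition $\mathcal Z_n$ by a greedy stopping time: $t_{k}=\inf\{t>t_{k-1}:\|v(t)-v(t_{k-1})\|_{\mathcal H}>2^{-n}\}$. Because the $p$-variation is bounded, $\#\mathcal Z_n\lesssim 2^{np}$. I then let $u_n$ be the step function equal to $v(t_{k-1})$ on $[t_{k-1},t_k)$. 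Right-continuity ensures that $\|v-u_n\|_{L^\infty}\le2^{-n}$, and each difference $u_n-u_{n-1}$ is a step function with at most about $2^{np}$ steps, each of size at most $2^{-n+2}$. Its $U^q$ norm is therefore at most $(2^{np})^{1/q}2^{-n+2}=4\cdot2^{-n(1-p/q)}$. This is summable in $n$ exactly when $p<q$. Telescoping $v=\sum_n(u_n-u_{n-1})$, together with the fact that $v(-\infty)=0$ makes the first step start at zero, gives $\|v\|_{U^q}\lesssim_{p,q}\|v\|_{V^p}$. The delicate points are controlling the number of stopping times and checking that the series converges to $v$ itself and not merely pointwise, which uses right-continuity.
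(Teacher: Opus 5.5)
Your proposal is correct and is essentially the standard Hadac--Herr--Koch argument: comparison of atoms for $U^p\hookrightarrow U^q\hookrightarrow L^\infty$ and for $U^p\hookrightarrow V^p_{-,rc}$, and dyadic stopping-time step approximations summed in $U^q$ for $V^p_{-,rc}\hookrightarrow U^q$. That is what the paper relies on, since it gives no proof and only cites \cite{HerrT32011}. One point to tighten: right-continuity is what guarantees $t_k>t_{k-1}$ and $\|v(t_k)-v(t_{k-1})\|_{\mathcal H}\ge 2^{-n}$, and these are what give the count $\#\mathcal Z_n\lesssim 2^{np}$. The bound $\|v-u_n\|_{L^\infty}\le 2^{-n}$, by contrast, follows from the definition of the infimum alone.

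You are also right that the last embedding needs $p<q$. As written in the paper with $1\le p\le q$, the inclusion $V^p_{-,rc}\hookrightarrow U^p$ is false, so what you prove is the corrected statement, which is the one in the cited source.
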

 
\begin{definition}[$U^p_{\Delta}H^s$ and $V^p_{\Delta}H^s$ spaces \cite{HerrT32011}]
	For $s\in \mathbb{R}$, we define the $U^p_{\Delta}H^s$ as the space of all functions $u : \mathbb{R}\to H^s(\mathbb{T}^6)$ such that
	\begin{equation*}
		\|u\|_{U^p_{\Delta} H^s} := \|e^{-it\Delta}u(t)\|_{U^p(\mathbb{R},H^s)}.
	\end{equation*}
	Similarly, we can define the $V^p_{\Delta}H^s$ as the space of all functions $u : \mathbb{R}\to H^s(\mathbb{T}^6)$ such that
	\begin{equation*}
		\|u\|_{V^p_{\Delta} H^s} := \|e^{-it\Delta}u(t)\|_{V^p(\mathbb{R},H^s)}.
	\end{equation*} 
\end{definition}

We need the following embedding properties of the $U^p$ and $V^p$ spaces.
\begin{proposition}[Embedding, Proposition 2.2, Proposition 2.4 and Corollary 2.6 in \cite{HadacKp22009}]\label{prop-pre-Embed}
	For $1\leq p \leq q < \infty$,
	\begin{align*}
		U_{\Delta}^p(\R, L^2) \hookrightarrow V_{\Delta}^p (\R, L^2) \hookrightarrow U_{\Delta}^q(\R, L^2) \hookrightarrow  L^{\infty}(\R,L^2) .
	\end{align*}
\end{proposition}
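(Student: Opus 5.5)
The chain of embeddings in the statement holds already at the level of the abstract spaces over the Hilbert space $\mathcal H=L^2(\mathbb T^6)$. So the plan is to reduce everything to \Cref{fml-pre-emb} by conjugating with the free propagator. By definition, $u\in U^p_\Delta(\mathbb R,L^2)$ exactly when $v(t):=e^{-it\Delta}u(t)$ lies in $U^p(\mathbb R,L^2)$, and $\|u\|_{U^p_\Delta L^2}=\|v\|_{U^p}$. The same holds for $V^p$. Each claimed embedding for the $\Delta$-adapted spaces is therefore equivalent to the corresponding embedding for $v$.

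The steps, in order, are as follows.

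\emph{First}, $U^p\hookrightarrow V^p$. For an atom $a=\sum_k 1_{[t_{k-1},t_k)}\phi_{k-1}$ and any partition $\{s_j\}$, the increments $a(s_j)-a(s_{j-1})$ are differences of consecutive values $\phi$. Hence
\[
\Bigl(\sum_j\|a(s_j)-a(s_{j-1})\|^p\Bigr)^{1/p}\le 2\Bigl(\sum_k\|\phi_k\|^p\Bigr)^{1/p}=2.
\]
Right continuity and the limit at $-\infty$ are immediate. Summing over an atomic decomposition, with the triangle inequality in $V^p$, gives $\|v\|_{V^p}\le 2\|v\|_{U^p}$.

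\emph{Second}, $V^p\hookrightarrow U^q$ for $p<q$. This is the nontrivial step, and I would quote it from \Cref{fml-pre-emb}, i.e.\ \cite[Corollary 2.6]{HadacKp22009}. The idea behind it: given $v\in V^p_{-,rc}$ of norm $1$, one builds for each $n$ a step-function approximation $v_n$ using stopping times at which $v$ moves by $2^{-n}$. This yields a partition with $O(2^{np})$ points, and
\[
\|v_n-v_{n-1}\|_{U^q}\lesssim 2^{-n}\,2^{np/q}.
\]
The right-hand side is summable precisely because $q>p$. When $q=p$ the embedding $V^p\hookrightarrow U^p$ is false, so the statement should be read with $p<q$ in the middle embedding, or with $p=q$ and the trivial $V^p\hookrightarrow V^p$. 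Summing the telescoping series gives the embedding. This is the main obstacle. If the statement is meant to include general $V^p$ rather than $V^p_{-,rc}$, one also has to note that the definition normalizes $v(\infty)=0$, so a constant shift by $v(-\infty)$ is handled separately at cost $\|v\|_{L^\infty}\le\|v\|_{V^p}$.

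\emph{Third}, $U^q\hookrightarrow L^\infty(\mathbb R,L^2)$. Each atom is bounded by $\sup_k\|\phi_k\|\le 1$, which gives $\|v\|_{L^\infty L^2}\le\|v\|_{U^q}$. Since $e^{it\Delta}$ is unitary on $L^2(\mathbb T^6)$ (a Fourier multiplier of modulus one), $\|u(t)\|_{L^2}=\|v(t)\|_{L^2}$. This transfers the bound back to $u$ and concludes the chain.
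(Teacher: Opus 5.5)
Your argument matches the paper, which gives no proof and only cites \cite{HadacKp22009}, apart from one side remark. The reduction through $v=e^{-it\Delta}u$ and the unitarity of $e^{it\Delta}$ is the same as in that reference. So are the elementary bounds $\|v\|_{V^p}\le 2\|v\|_{U^p}$ and $\|v\|_{L^\infty L^2}\le\|v\|_{U^q}$, and the stopping-time sketch for $V^p_{-,rc}\hookrightarrow U^q$. You are also right that the middle embedding fails for $p=q$. The statement, like \Cref{fml-pre-emb}, misquotes \cite[Corollary 2.6]{HadacKp22009}, which requires $p<q$.

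Drop the proposed patch for general $V^p$; it does not work. Every atom has $\phi_0=0$ and is a right-continuous step function. Hence every element of $U^q$ is right-continuous and tends to $0$ at $-\infty$. A $v\in V^p$ with $v(-\infty)\neq 0$, or one that fails to be right-continuous at a jump, is therefore not in $U^q$ at all.

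The bound $\|v(-\infty)\|\le\|v\|_{V^p}$ is true but does not help, because the nonzero constant function $v(-\infty)$ is not in $U^q$. Subtracting it and treating it separately cannot place $v$ in $U^q$. So, just as with $p=q$, the literal statement is false for the full space $V^p_\Delta$. The middle space must be $V^p_{-,rc}$ conjugated by $e^{it\Delta}$, as in \cite{HadacKp22009}.
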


\begin{proposition}[Duality, Theorem 2.8 in \cite{HadacKp22009}]\label{prop-pre-dual}
	Let $DU_\Delta^p$ be the space of functions
	\begin{align*}
		DU_\Delta^p = \{(i\partial_t + \Delta) u : u \in U_\Delta^p \},
	\end{align*}
	and the $DU_\Delta^p = (V_\Delta^{p' })^{\ast}$, with $\frac{1}{p} + \frac{1}{p' } = 1$. Then for any interval $0 \in J$
	\begin{align*}
		\norm{\int_0^t e^{i(t-t' )\Delta}F(u)(t' )\, dt' }_{U_\Delta^p (J\times \R^d)} \lesssim \sup\bracket{\int_J \inner{ v, F}  \,  dt: \norm{v}_{V_\Delta^{p' } }=1 }.
	\end{align*}
\end{proposition}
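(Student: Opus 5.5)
The plan is to conjugate by the free flow, which reduces the estimate to the abstract duality between $U^p$ and $V^{p'}$ from \cite{HadacKp22009}, applied to an absolutely continuous function. First extend $F$ by zero outside $J$, so that all integrals are over $\mathbb R$. Then set
\[
w(t):=e^{-it\Delta}\int_0^t e^{i(t-t')\Delta}F(t')\,dt'=\int_0^t f(t')\,dt',\qquad f(t'):=e^{-it'\Delta}F(t').
\]
By the definition of $U^p_\Delta$, we have $\|\int_0^te^{i(t-t')\Delta}F\,dt'\|_{U^p_\Delta}=\|w\|_{U^p}$. It therefore suffices to prove
\[
\|w\|_{U^p}\lesssim \sup\Big\{\Big|\int_{\mathbb R}\langle f(t),v(t)\rangle\,dt\Big| : \|v\|_{V^{p'}}=1\Big\}. \tag{$\ast$}
\]
Once $(\ast)$ holds, substitute $v=e^{-it\Delta}\tilde v$. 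Since $e^{it\Delta}$ is unitary on $L^2$,
\[
\int\langle f,v\rangle\,dt=\int\langle F,\tilde v\rangle\,dt,
\]
and $\|v\|_{V^{p'}}=\|\tilde v\|_{V^{p'}_\Delta}$. This gives exactly the stated bound.

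To prove $(\ast)$, I will use the bilinear form of \cite{HadacKp22009}. For $u\in U^p$ and $v\in V^{p'}$, define $B(u,v)$ as the limit over refining partitions $\{t_k\}\in\mathcal Z$ of
\[
\sum_k\langle u(t_k)-u(t_{k-1}),v(t_k)\rangle .
\]
The key steps are as follows.

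\emph{Step 1: boundedness.} On a $U^p$-atom, Hölder's inequality in the index $k$ bounds the sum by $\|v\|_{V^{p'}}$. Summing over atoms gives $|B(u,v)|\le\|u\|_{U^p}\|v\|_{V^{p'}}$.

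\emph{Step 2: the reverse (norming) inequality.} Show that $v\mapsto B(\cdot,v)$ identifies $V^{p'}$ isometrically with $(U^p)^*$. Testing a functional on step functions, which are dense in $U^p$, and using the atomic structure yields a representing $v$ with the right $V^{p'}$ norm. By Hahn--Banach this gives
\[
\|u\|_{U^p}=\sup_{\|v\|_{V^{p'}}=1}|B(u,v)|.
\]

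\emph{Step 3: absolutely continuous $u$.} When $u'=f\in L^1_tL^2_x$, the Riemann--Stieltjes sums converge to $\int\langle f,v\rangle\,dt$, up to the boundary convention $u(\infty)=0$. This needs only that $v$ is regulated, which every $V^{p'}$ function is.

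To combine the steps, apply Step 2 to $w$. Since $w$ is not normalized at $-\infty$ (we have $w(-\infty)=-\int_{-\infty}^0f$), split $w$ at $t=0$. Write the forward part as $\int_0^t f\,\mathbf 1_{t'>0}\,dt'$ and the backward part as $-\int_t^0 f\,\mathbf 1_{t'<0}\,dt'$. Each part is treated separately; the backward part is handled by time reversal, which preserves both $U^p$ and $V^{p'}$ norms. Each part then equals $\int\langle f,v\rangle$ for a suitable $v$, so $(\ast)$ follows with constant $2$. To justify Step 3 I will first assume $F$ is smooth and compactly supported, and recover the general case by density together with the already established upper bound.

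The main obstacle is Step 2: proving that the $V^{p'}$-pairing actually norms $U^p$, rather than merely being bounded by it. This is where the atomic definition must be matched against the variation norm, via the representation of dual functionals on step functions. This is precisely Theorem 2.8 of \cite{HadacKp22009}, and I would invoke it after the reduction above rather than reproving it. The remaining points — the time-reversal and splitting bookkeeping, and the passage from the Stieltjes sums to the integral — are routine.
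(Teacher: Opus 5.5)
Your core reduction is exactly what the paper's citation amounts to: conjugate by the free flow, then invoke the $U^p$--$V^{p'}$ duality of \cite{HadacKp22009} (Theorem 2.8) together with the integral formula for $B(u,v)$ when $u\in V^1_-$ is absolutely continuous, which is your Step 3. This is correct for $J\subset[0,\infty)$.

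The gap is in the negative-time bookkeeping, which you call routine.
\begin{itemize}
\item After extending $F$ by zero, $w(t)=\int_0^t f\,dt'$ tends to $-\int_{-\infty}^0 f\,dt'$ as $t\to-\infty$, which is in general a nonzero vector.
\item Every $U^p$ function tends to $0$ at $-\infty$: atoms vanish before $t_1$ because $\phi_0=0$, and atomic series converge uniformly. So $\|w\|_{U^p}=\infty$ while the right-hand side of $(\ast)$ is finite, and $(\ast)$ is false as stated.
\item Splitting at $0$ does not help, since $w_-$ has the same nonzero limit at $-\infty$.
\item Time reversal is not a symmetry of $U^p$. Its elements must vanish at $-\infty$ but may have a nonzero limit at $+\infty$, and they are right-continuous. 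Nor is it exactly a symmetry of $V^{p'}$, which uses the one-sided convention $v(\infty)=0$.
\item The reversed function $s\mapsto w_-(-s)$ does belong to $U^p$, but its norm says nothing about $w_-$, which lies in no $U^p(\mathbb R)$.
\end{itemize}
So your claim that the backward part ``equals $\int\langle f,v\rangle$ for a suitable $v$'' fails.

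To repair it, read $U^p_\Delta(J)$ as a restriction norm, as the paper does for $X^s(I)$, and assume $a:=\inf J>-\infty$. For $J$ unbounded below the inequality genuinely fails: take $F$ supported in $[-2,-1]$ with $\int e^{-it'\Delta}F\,dt'\neq0$. The paper only needs $|I|\le1$, so this restriction costs nothing.

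Use the extension of $w|_J$ given by $\widetilde w=\mathbf 1_{[a,\infty)}w=\int_{-\infty}^t\mathbf 1_J f\,dt'+\mathbf 1_{[a,\infty)}\,w(a)$.
\begin{itemize}
\item The first term is absolutely continuous and lies in $V^1_-$. Your Steps 2--3 therefore bound its $U^p$ norm by $\sup_{\|v\|_{V^{p'}}=1}\bigl|\int_J\langle f,v\rangle\,dt\bigr|$.
\item The second term is a single step, with $U^p$ norm $\|w(a)\|_{L^2}=\|\int_a^0 f\,dt'\|_{L^2}$. Test against $v=\mathbf 1_{[a,0)}\phi$ with $\|\phi\|_{L^2}=1$, so that $\|v\|_{V^{p'}}\le 2^{1/p'}$. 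This bounds it by $2^{1/p'}$ times the same supremum.
\end{itemize}
Together these give the estimate with constant $1+2^{1/p'}$, with no splitting at $0$ and no time reversal. Conjugating back by $e^{it\Delta}$ then finishes as you describe.
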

We also require the following property, which can transfer the map from $L^2$ to $U_\Delta^p$. 
\begin{proposition}[Transfer Principle, Proposition 2.19 in \cite{HadacKp22009}]\label{prop-pre-transpri}
	Let 
	\begin{align*}
		T_0 : L^2 \times \dots \times L^2 \to L_{loc}^{1}
	\end{align*}
	be an m-linear operator. Assume that for some $1\leq p,\ q \leq \infty$
	\begin{align*}
		\norm{ T_0 (e^{it\Delta} \phi_1,\dots, e^{it\Delta} \phi_m)}_{L^p(\R, L_x^q)} \lesssim \prod_{i=1}^m\norm{\phi_i}_{L^2}.
	\end{align*}
	Then, there exists an extension $T : U_{\Delta}^p \times \dots \times U_{\Delta}^p \to L^p(\R, L_x^q)$ satisfying
	\begin{align*}
		\norm{T(u_1, \dots, u_m)}_{L^p(\R, L_x^q)} \lesssim \prod_{i=1}^m \norm{u_i}_{U_{\Delta}^p},
	\end{align*}
	and  such that $T(u_1, \dots , u_m) (t, \cdot) = T_0 (u_1(t), \dots , u_m(t))(\cdot)$, a.e.
\end{proposition}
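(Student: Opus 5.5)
The transfer principle is a statement about atoms, so I would prove it by reducing to $U^p$-atoms, applying the hypothesis on each time interval, and summing the resulting pieces.

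\textbf{Step 1: definition on atoms.} Let $a=\sum_{k=1}^K 1_{[t_{k-1},t_k)}e^{it\Delta}\phi_{k-1}$ be a $U^p_\Delta$-atom, i.e.\ $e^{-it\Delta}a$ is a $U^p$-atom with $\sum_k\|\phi_{k-1}\|_{L^2}^p=1$. Take $m$ such atoms $a_1,\dots,a_m$, with partitions $\{t^{(i)}_k\}$. Let $\{s_l\}$ be the common refinement of these partitions. On each interval $[s_{l-1},s_l)$ every $a_i$ equals $e^{it\Delta}\psi_{i,l}$ for a single $\psi_{i,l}\in L^2$. Define
\[
T(a_1,\dots,a_m)(t)=\sum_l 1_{[s_{l-1},s_l)}(t)\,T_0\bigl(e^{it\Delta}\psi_{1,l},\dots,e^{it\Delta}\psi_{m,l}\bigr).
\]
This agrees pointwise a.e.\ with $T_0(a_1(t),\dots,a_m(t))$.

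\textbf{Step 2: bound on atoms.} Assume $p<\infty$; the case $p=\infty$ is immediate from the same decomposition. The intervals $[s_{l-1},s_l)$ are disjoint, so
\[
\|T(a_1,\dots,a_m)\|_{L^pL^q}^p=\sum_l\bigl\|1_{[s_{l-1},s_l)}T_0(e^{it\Delta}\psi_{1,l},\dots)\bigr\|_{L^pL^q}^p\lesssim\sum_l\prod_{i=1}^m\|\psi_{i,l}\|_{L^2}^p,
\]
where the inequality uses the hypothesis on each piece. Each $\psi_{i,l}$ equals some $\phi^{(i)}_{k}$. The key point is to bound the sum over the refinement without gaining a factor of $m$. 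Since $\|\psi_{i,l}\|_{L^2}\le1$ for all $i,l$, drop all factors except $i=1$. The sum over $l$ then becomes a sum over intervals of the refinement, and each original interval of $a_1$ is split into several pieces.

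This splitting is the main obstacle: a naive count gives a multiplicity of pieces. I would instead use that each $a_i$ is a single atom, together with the generalized Hölder inequality $\sum_l\prod_i x_{i,l}\le\prod_i(\sum_l x_{i,l}^{m})^{1/m}$. After multiplicities this still has repeats. The cleaner route is to first prove the bound when the atoms share a common partition, in which case the sum is $\le\prod_i(\sum_l\|\psi_{i,l}\|^{p})^{1/m}\cdot$, which is $\le1$ by Hölder because each inner sum is $1$. The general case then follows because refining a partition does not change a $U^p$ function. That is, when $a_i$ is rewritten on the common refinement, the repeated $\phi$'s are absorbed into the $L^p$-in-time integrals: the pieces $[s_{l-1},s_l)$ inside one interval of $a_1$ recombine as $\|1_{[t_{k-1},t_k)}T_0(\dots)\|^p$. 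I expect this bookkeeping to be handled by inducting on $m$, treating $T_0(e^{it\Delta}\phi,\cdot)$ as an $(m-1)$-linear operator on each interval of $a_1$ with bound $\|\phi\|_{L^2}$. The outcome should be $\|T(a_1,\dots,a_m)\|\lesssim1$.

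\textbf{Step 3: extension by multilinearity.} For general $u_i=\sum_j\lambda_{i,j}a_{i,j}$, set
\[
T(u_1,\dots,u_m)=\sum_{j_1,\dots,j_m}\lambda_{1,j_1}\cdots\lambda_{m,j_m}\,T(a_{1,j_1},\dots,a_{m,j_m}).
\]
This series converges absolutely in $L^pL^q$ with norm $\lesssim\prod_i\sum_j|\lambda_{i,j}|$. Taking the infimum over decompositions gives $\lesssim\prod_i\|u_i\|_{U^p_\Delta}$. The identity $T(u)(t)=T_0(u_1(t),\dots,u_m(t))$ a.e.\ follows from the convergence. Indeed, $U^p\hookrightarrow L^\infty L^2$ (\Cref{prop-pre-Embed}) gives pointwise convergence of the partial sums in $L^2$. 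Passing to a subsequence that converges a.e.\ in $L^pL^q$ then identifies the limit, and this also shows the extension is well defined, independent of the chosen decomposition.
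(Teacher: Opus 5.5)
Steps 1 and 3 follow the standard scheme behind the cited Hadac--Herr--Koch result. Step 2, the bound on atoms, is the entire content of the transfer principle, and it is not actually proved.

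The reduction you settle on is invalid. Rewriting $a_i$ on the common refinement leaves the function unchanged, but its coefficients $\psi_{i,l}$ now repeat. So $\sum_l\|\psi_{i,l}\|_{L^2}^p$ is no longer $1$: if $a_1$ has one nonzero coefficient of norm $1$ and $a_2$ jumps $K$ times on its support, this sum is about $K$. Your common-partition bound used exactly that normalization.

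The ``recombination'' over an interval $[t_{k-1},t_k)$ of $a_1$ only removes the repetitions of $a_1$'s coefficients. You still need $\|1_{[t_{k-1},t_k)}T_0(e^{it\Delta}\phi_k,a_2,\dots,a_m)\|_{L^pL^q}\lesssim\|\phi_k\|_{L^2}$ while $a_2,\dots,a_m$ jump inside that interval. That is precisely what you defer to an induction that is announced but not carried out.

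Your worry about a factor of $m$ is beside the point, since the implicit constant may depend on $m$. In fact your first attempt closes if you choose the surviving factor adaptively. Charge each refined interval $[s_{l-1},s_l)$, $l\ge2$, to an index $i$ such that $s_{l-1}$ is a partition point of $a_i$. Then bound $\prod_{i'}\|\psi_{i',l}\|_{L^2}^p\le\|\psi_{i,l}\|_{L^2}^p$, since all norms are at most $1$. Each original coefficient is charged at most once, so $\sum_l\prod_i\|\psi_{i,l}\|_{L^2}^p\le m+1$.

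The cleanest fix, and the standard argument, indexes the pieces by tuples of original indices and gives up the time localization. Write $a_i=\sum_k1_{[t^{(i)}_{k-1},t^{(i)}_k)}e^{it\Delta}\phi^{(i)}_{k-1}$ and $I_{\mathbf k}=\bigcap_i[t^{(i)}_{k_i-1},t^{(i)}_{k_i})$; these sets partition $\mathbb{R}$. Then
\begin{align*}
\|T(a_1,\dots,a_m)\|_{L^pL^q}^p
&=\sum_{\mathbf k}\bigl\|1_{I_{\mathbf k}}T_0\bigl(e^{it\Delta}\phi^{(1)}_{k_1-1},\dots,e^{it\Delta}\phi^{(m)}_{k_m-1}\bigr)\bigr\|_{L^pL^q}^p\\
&\le\sum_{\mathbf k}\bigl\|T_0\bigl(e^{it\Delta}\phi^{(1)}_{k_1-1},\dots,e^{it\Delta}\phi^{(m)}_{k_m-1}\bigr)\bigr\|_{L^p(\mathbb{R};L^q)}^p\\
&\lesssim\sum_{k_1,\dots,k_m}\prod_{i=1}^m\|\phi^{(i)}_{k_i-1}\|_{L^2}^p
=\prod_{i=1}^m\sum_{k}\|\phi^{(i)}_{k-1}\|_{L^2}^p=1.
\end{align*}
Here the sum is enlarged to all tuples, including those with $I_{\mathbf k}=\emptyset$, so that it factorizes.

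Your induction can be completed the same way: drop the indicator of $[t_{k-1},t_k)$ and apply the $(m-1)$-linear bound on all of $\mathbb{R}$ to the full atoms $a_2,\dots,a_m$. The induction hypothesis must then cover the time-dependent operators $T_0(e^{it\Delta}\phi_k,\cdot)$, which the tuple argument avoids.

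Finally, in Step 3 the a.e.\ identification with $T_0(u_1(t),\dots,u_m(t))$, and hence independence of the chosen decompositions, uses continuity of $T_0\colon(L^2)^m\to L^1_{loc}$. This is implicit in the statement and holds in the applications in the paper.
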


Now we define the following two function spaces which will be crucial in establishing the local and global well-posedness.
\begin{definition}[Scalar $X^s$ and $Y^s$ spaces in \cite{HerrT32011}]\label{scdef-pre-Xs-Ys}
	For $s\in \mathbb{R}$, we define $X^s$ and $Y^s$ the spaces of all functions $ u : \mathbb{R}\to H^s(\mathbb{T}^6)$ such that for every $n\in \mathbb{Z}^6$, the map $t\mapsto e^{it|n|^2}\widehat{u(t)}(n)$ is in $U^2(\mathbb{R}, \mathbb{C})$ and $V_{rc}^2(\mathbb{R}, \mathbb{C})$, respectively, with the norm
	\begin{align*}
		\norm{u}^2_{X^s} : = \sum_{\xi \in \Z^d} \inner{\xi}^{2s} \norm{\widehat{e^{-it\Delta}u(t)}(\xi)}_{U^2}^2,\\
		\norm{u}^2_{Y^s} : = \sum_{\xi \in \Z^d} \inner{\xi}^{2s} \norm{\widehat{e^{-it\Delta}u(t)}(\xi)}_{V^2}^2.
	\end{align*}
	For intervals $I \subset \R$, we define $X^s (I)$, $s \in \R$ as the restriction norms of $X^s$; thus
	\begin{align*}
		X^s(I) : = \left\{ u \in C(I,H^s) : \norm{u}_{X^s (I)} : = \sup_{\substack{J\subseteq\R\\|J|\leq1}}  \inf_{v  |_J (t) = u |_J (t)} \norm{v}_{{X}^s} < \infty\right\} .
	\end{align*}
	We can define the spaces $Y^s(I)$ in a similar way. Therefore, we have the embedding relation
	\begin{equation}\label{fml-pre-Embed-XY}
		U^2_{\Delta}H^s \hookrightarrow X^s \hookrightarrow Y^s \hookrightarrow V^2_{\Delta}H^s.
	\end{equation} 
	
\end{definition}
\begin{remark}
       For convenience, we always call $u$ is adapted to $e^{it\Delta}$ if $u$ satisfies \eqref{eq:nls}, or other nonlinear terms. So these definitions will change with the structure of equations, especially the linear part, we will see this more clearly in studying $\eqref{eq:system1}$.
\end{remark}
Applying \eqref{fml-pre-Embed-XY} to every Fourier coefficient gives
\begin{equation}\label{eq:XY-energy}
 X^s(I)\hookrightarrow Y^s(I)
 \hookrightarrow L_t^\infty H_x^s(I\times\T^3).
\end{equation}
Naturally, we need to consider $X^s$ and $Y^s$ for $\bfu =(u_1,u_2)$, which is similar to scalar spaces.
\begin{definition}[Vectorial $X^s$ and $Y^s$ spaces]\label{def-pre-Xs-Ys}
	For $s\in \mathbb{R}$, we define $X^s$ and $Y^s$ the spaces of all functions $\bfu=(u_1,u_2) : \mathbb{R}\to H^s(\mathbb{T}^6)$ such that for every $n\in \mathbb{Z}^6$, the map $t\mapsto e^{it|n|^2}\widehat{u_{i}(t)}(n)$, where $i=1,2$, is in $U^2(\mathbb{R}, \mathbb{C})$ and $V_{rc}^2(\mathbb{R}, \mathbb{C})$, respectively, with the norm
	\begin{align*}
 \|\mathbf u\|_{X^s}^2
 :=
 \sum_{\xi\in\mathbb Z^6}\langle\xi\rangle^{2s}
 \left(
 \bigl\|e^{it|\xi|^2}\widehat u_1(t,\xi)\bigr\|_{U^2}^2
 +
 \bigl\|e^{\frac{i}{2}t|\xi|^2}
          \widehat u_2(t,\xi)\bigr\|_{U^2}^2
 \right).\\
 \|\mathbf u\|_{Y^s}^2
 :=
 \sum_{\xi\in\mathbb Z^6}\langle\xi\rangle^{2s}
 \left(
 \bigl\|e^{it|\xi|^2}\widehat u_1(t,\xi)\bigr\|_{V^2}^2
 +
 \bigl\|e^{\frac{i}{2}t|\xi|^2}
          \widehat u_2(t,\xi)\bigr\|_{V^2}^2
 \right).
	\end{align*}

	For intervals $I \subset \R$, we define $X^s (I)$, $s \in \R$ as the restriction norms of $X^s$; thus
	\begin{align*}
		X^s(I) : = \left\{ \bfu \in C(I,H^s) : \norm{\bfu}_{X^s (I)} : = \sup_{\substack{J\subseteq\R\\|J|\leq1}}  \inf_{\mathbf{v} |_J (t) = \bfu |_J (t)} \norm{\mathbf{v}}_{{X}^s} < \infty\right\} .
	\end{align*}
\end{definition}
\begin{proposition}[\cite{HadacKp22009}]\label{prop-pre-Xs-Hs}
	Suppose $\bfu :=  e^{it\Delta}\boldsymbol{\phi}$, then for any $T>0$ we have
	\begin{equation*}
		\|\bfu\|_{X^s([0,T])} \leq \|\boldsymbol{\phi}\|_{H^s}.
	\end{equation*}
\end{proposition}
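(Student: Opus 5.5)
The plan is to compute the $X^s$ norm directly, Fourier coefficient by Fourier coefficient, using the fact that a free solution has frequency profiles that are constant in time after removing the linear phase. For the vectorial statement, read $e^{it\Delta}\boldsymbol\phi$ componentwise with the propagator matched to the weights in Definition \ref{def-pre-Xs-Ys}:
\[
\widehat{u_1}(t,\xi)=e^{-it|\xi|^2}\widehat{\phi_1}(\xi),\qquad \widehat{u_2}(t,\xi)=e^{-\frac i2 t|\xi|^2}\widehat{\phi_2}(\xi).
\]
With this reading,
\[
e^{it|\xi|^2}\widehat u_1(t,\xi)=\widehat{\phi_1}(\xi),\qquad e^{\frac i2 t|\xi|^2}\widehat u_2(t,\xi)=\widehat{\phi_2}(\xi)
\]
for every $t\in\mathbb R$ and every $\xi\in\mathbb Z^6$. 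The scalar case is the same computation with one component.

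First, fix an interval $J\subseteq\mathbb R$ with $|J|\le 1$, as in the restriction norm. Pick $t_1\le\inf J$ and define the extension $\mathbf v(t)=1_{[t_1,\infty)}(t)\,\bfu(t)$. Then $\mathbf v|_J=\bfu|_J$, so $\mathbf v$ is admissible in the infimum defining $\|\bfu\|_{X^s(J)}$. Since $J\subseteq[0,T]$ is automatic only for the relevant $J$, the same extension also handles restriction to $[0,T]$.

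Next, estimate each coefficient in $U^2(\mathbb R,\mathbb C)$. For each $\xi$, the modulated coefficient of $\mathbf v$ is $1_{[t_1,\infty)}(t)\,\widehat{\phi_j}(\xi)$. If $\widehat{\phi_j}(\xi)\ne0$, this equals $|\widehat{\phi_j}(\xi)|$ times a $U^2$-atom. The atom uses the partition $-\infty=t_0<t_1<t_2=\infty$ with values $0$ on $[t_0,t_1)$ and $\widehat{\phi_j}(\xi)/|\widehat{\phi_j}(\xi)|$ on $[t_1,\infty)$, which has unit $\ell^2$ sum. Hence its $U^2$ norm is at most $|\widehat{\phi_j}(\xi)|$.

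Finally, sum with weights:
\[
\|\mathbf v\|_{X^s}^2\le\sum_{\xi\in\mathbb Z^6}\langle\xi\rangle^{2s}\bigl(|\widehat{\phi_1}(\xi)|^2+|\widehat{\phi_2}(\xi)|^2\bigr)=\|\boldsymbol\phi\|_{H^s}^2.
\]
Taking the infimum over extensions and then the supremum over $J$ gives the claim.

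There is no genuine obstacle here. The only point needing care is the convention that $U^p$-atoms vanish near $-\infty$ (that is, $\phi_0=0$). This is exactly why we cut off at $t_1$ rather than use the globally constant function, which is not an atom. The cut-off costs nothing because the restriction norm only sees bounded intervals $J$.
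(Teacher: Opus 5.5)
Your proof is correct. Cutting off at a time $t_1\le\inf J$ makes each phase-corrected Fourier coefficient equal to $|\widehat{\phi_j}(\xi)|$ times a single $U^2$-atom (with $\phi_0=0$), and matching the second component to the propagator $e^{\frac i2 t\Delta}$ is exactly what the vectorial $X^s$ norm requires. The paper itself gives no proof and only cites \cite{HadacKp22009}; your one-atom argument is the standard proof of this fact in that literature, which for $I=[0,T]$ uses the cutoff $\mathbf{1}_{[0,\infty)}$.
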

We denote the norm $N^s(I)$ by the $X^s$-norm of the Duhamel term.
\begin{definition}[Nonlinear norm $N(I)$]\label{def-pre-Nnorm}
	Let $I=[0, T]$, we define the nonlinear norm $N^s(I)$ as
	\begin{equation*}
		\|\mathbf{f}\|_{N^s(I)} := \left\| \int_0^t e^{i(t-t')\Delta} \mathbf{f}(t') dt'\right\|_{X^{s}(I)}.
	\end{equation*}
\end{definition}

We recall following Duhamel duality.
\begin{proposition}\label{prop:duality}
Let $s \ge 0$, and $I=[t_0,b)$, then 
\begin{equation}\label{eq:duality}
 \|\mathbf{f}\|_{N^s(I)}
 \lesssim
 \sup_{\|\mathbf{v}\|_{Y^{-s}(I)}=1}
 \left|\int_I\int_{\mathbb{T}^6}\mathbf{f}(t,x)\cdot \overline{\mathbf{v}(t,x)}\dd x\dd t\right|.
\end{equation}
\end{proposition}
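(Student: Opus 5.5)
The estimate \eqref{eq:duality} is a vectorial version of the scalar duality of \Cref{prop-pre-dual}. I would prove it by reducing to a single Fourier mode on a single component and then using the $U^2$--$V^2$ duality from \cite{HadacKp22009}. The one change relative to the scalar case is that each component is conjugated by its own free flow: $e^{it|\xi|^2}$ for $u_1$ and $e^{\frac i2 t|\xi|^2}$ for $u_2$. So $e^{i(t-t')\Delta}$ in Definition~\ref{def-pre-Nnorm} is read componentwise with the corresponding dispersion, and the argument is carried out separately for each component.

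\textbf{Step 1: reduce to Fourier coefficients.} Set $\mathbf{w}(t)=\int_{t_0}^t e^{i(t-t')\Delta}\mathbf{f}(t')\,dt'$. Then
\[
e^{it|\xi|^2}\widehat{w_1}(t,\xi)=i\int_{t_0}^t e^{it'|\xi|^2}\widehat{f_1}(t',\xi)\,dt',
\]
and similarly for $w_2$ with $\tfrac12|\xi|^2$. After a harmless extension by zero outside $I$, the $X^s(I)$ norm of $\mathbf{w}$ is controlled by
\[
\Bigl(\sum_{\xi}\langle\xi\rangle^{2s}\bigl(\|g_{1,\xi}\|_{U^2}^2+\|g_{2,\xi}\|_{U^2}^2\bigr)\Bigr)^{1/2},
\]
where $g_{j,\xi}(t)=\int_{t_0}^t 1_I(t')\,h_{j,\xi}(t')\,dt'$ and $h_{j,\xi}$ are the conjugated Fourier coefficients of $f_j$.

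\textbf{Step 2: scalar duality.} For a scalar function $g$ with $g(t_0)=0$ (i.e.\ $g\in U^2$ with $g'=h$), Theorem~2.8 of \cite{HadacKp22009} gives
\[
\|g\|_{U^2}\lesssim \sup_{\|v\|_{V^2}=1}\Bigl|\int h\,\bar v\,dt\Bigr|.
\]
This is exactly the statement $DU^2=(V^2)^*$, applied with $\mathcal H=\mathbb C$. One subtlety is that the supremum may be taken over $V^2_{rc}$ functions, which is what allows the $Y$ spaces to be used. For each $(j,\xi)$, I choose a near-extremizer $v_{j,\xi}$ normalized so that $\int h_{j,\xi}\overline{v_{j,\xi}}\approx\|g_{j,\xi}\|_{U^2}$.

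\textbf{Step 3: assemble a test function.} I define $\mathbf{v}$ componentwise by
\[
\widehat{v_1}(t,\xi)=e^{-it|\xi|^2}a_{1,\xi}v_{1,\xi}(t),
\]
and likewise for $v_2$ with phase $e^{-\frac i2 t|\xi|^2}$. The weights are
\[
a_{j,\xi}=\langle\xi\rangle^{2s}\|g_{j,\xi}\|_{U^2}\big/\|\mathbf{w}\|_{X^s}.
\]
Then $\|\mathbf{v}\|_{Y^{-s}}^2=\sum\langle\xi\rangle^{-2s}|a_{j,\xi}|^2\lesssim1$. By Plancherel in $x$, the pairing $\int_I\int\mathbf{f}\cdot\bar{\mathbf{v}}$ equals $\sum_{j,\xi}a_{j,\xi}\int h_{j,\xi}\overline{v_{j,\xi}}$, which is $\gtrsim\|\mathbf{w}\|_{X^s}$. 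The phases cancel precisely because the $Y$ norm of each component uses the same dispersion as the corresponding component of the Duhamel operator.

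\textbf{Main obstacle.} The key difficulty is handling the restriction norm on $I$ together with the supremum over $|J|\le1$ in the definition of $X^s(I)$, while keeping the infinite sum over $\xi$ rigorous. I would first argue with finitely many $\xi$ and then pass to a limit by monotone convergence. For the restriction, I would use that extending $\mathbf{f}$ by zero outside $I$ gives a global Duhamel function whose restriction to $I$ agrees with $\mathbf{w}$. I would also use that $V^2$ test functions restricted to subintervals keep their norms, up to the endpoint conventions $v(\infty)=0$.
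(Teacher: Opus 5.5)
Your argument is correct and is the standard proof of this estimate: reduce to a single component and Fourier mode, apply the Hadac--Herr--Koch $U^2$--$V^2$ duality (with the integral representation $B(g,v)=-\int g'\bar v\,dt$ for absolutely continuous $g$ vanishing before $t_0$), then reassemble a test function in $Y^{-s}$ with weights $\langle\xi\rangle^{2s}\|g_{j,\xi}\|_{U^2}$, each component carrying its own dispersion relation. The paper gives no proof of its own and simply recalls the result (it is Herr--Tataru--Tzvetkov's Proposition 2.11, which is proved exactly this way); the only implicit hypothesis you should state is that $\hat f_j(\cdot,\xi)\in L^1_t(I)$, e.g.\ $\mathbf{f}\in L^1_tH^s_x(I)$, which holds in every application in the paper.
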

\begin{definition}[$Z$ norm]\label{Z}\label{pre:Z}
We define
\begin{equation}\label{pre:Zformula}
 \|\bfu \|_{Z(I)}=
 \sup_{J\subset I,\ |J|\leq1}
 \left(\sum_N N^{2}\|P_N\bfu\|_{L_{t,x}^3(J\times\mathbb T^6)}^3\right)^{1/3}.
\end{equation}
\end{definition}
\begin{remark}
	Above definitions and propositions is same hold for scalar function $u$.
\end{remark}
\subsection{Linear Strichartz estimates for frequency-localized data}
In this part, we recall some linear Strichartz estimates in $\mathbb{T}^6$.
\begin{proposition}[Strichartz estimates \cite{Bour1993,BD}]
	\label{prop-pre-Stri}
	If $p>\frac{8}{3}$, then
	\begin{equation*}
		\|P_N e^{it\Delta} f\|_{L^p_{t,x}([-1,1]\times\T^6)} \lesssim_p
		N^{3-\frac{8}{p}} \|f\|_{L^2_x}
	\end{equation*}
	and
	\begin{equation*}
		\|P_C e^{it\Delta} f\|_{L^p_{t,x}([-1,1]\times\T^6)} \lesssim_p
		N^{3-\frac{8}{p}} \|f\|_{L^2_x}
	\end{equation*}
	where $C$ is a cube of side length N and $f\in L^2(\T^6)$.
\end{proposition}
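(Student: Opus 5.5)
Since this is a cited result from \cite{Bour1993,BD}, the plan is to derive it from the $\ell^2$ decoupling theorem rather than reprove it from scratch. The second estimate (for a cube $C$ of side $N$) reduces to the first. Write $C=\xi_0+[-N/2,N/2)^6$ with $\xi_0\in\mathbb Z^6$. Then Galilean invariance on the torus gives
\[
|e^{it\Delta}P_Cf(x)|=|(e^{it\Delta}P_{C-\xi_0}g)(x-2t\xi_0)|,
\qquad \widehat g(\xi)=\widehat f(\xi+\xi_0).
\]
The spatial shift $x\mapsto x-2t\xi_0$ preserves $L^p_x(\mathbb T^6)$ for each fixed $t$, and $\|g\|_{L^2}=\|f\|_{L^2}$. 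So it suffices to treat frequencies in a cube of side $\sim N$ centered at the origin, which is the $P_N$ case up to replacing $P_N$ by $P_{\le N}$.

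For the first estimate, I would start from the Bourgain--Demeter decoupling consequence:
\[
\|P_{\le N}e^{it\Delta}f\|_{L^p_{t,x}([-1,1]\times\mathbb T^6)}\lesssim_\varepsilon N^{\frac d2-\frac{d+2}{p}+\varepsilon}\|f\|_{L^2},
\qquad p\ge \tfrac{2(d+2)}{d},
\]
with $d=6$. Here the critical exponent is $p_0=\frac{2(d+2)}{d}=\frac83$, and the scaling exponent is $\frac d2-\frac{d+2}{p}=3-\frac8p$. The task is to remove the $\varepsilon$-loss strictly above $p_0$. To do this I would follow the Bourgain / Killip--Vi\c{s}an argument.

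\begin{enumerate}
\item Decompose the time interval $[-1,1]$ and use a Tomas--Stein type argument. Write $|e^{it\Delta}P_Nf|^2$ via the kernel $K_N(t,x)=\sum_{|\xi|\lesssim N}e^{i(x\cdot\xi-t|\xi|^2)}$.
\item Decompose $K_N$ dyadically according to the major arcs of $t$ near rationals $a/q$, using Weyl sum bounds
\[
|K_N(t,x)|\lesssim \Bigl(\frac{N}{\sqrt q\,(1+N|t-a/q|^{1/2})}\Bigr)^{6}.
\]
\item Handle level sets $\{|e^{it\Delta}P_Nf|>\lambda\}$ separately. Large $\lambda$ is controlled by the major arc analysis, which yields the sharp bound above $p_0$. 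Small $\lambda$ is controlled by interpolating with the $\varepsilon$-lossy decoupling estimate at $p_0$. The strict inequality $p>p_0$ leaves room to absorb $N^\varepsilon$.
\end{enumerate}

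The main obstacle is this $\varepsilon$-removal step, specifically the level-set summation in the intermediate range of $\lambda$, where both the minor arc $L^\infty$ bound and the decoupling bound are needed simultaneously. In practice I would simply cite Killip--Vi\c{s}an \cite{KillipVisan2016}, Theorem 1.1, which gives exactly the scale-invariant estimate for $p>\frac{2(d+2)}{d}$. Combined with the Galilean reduction above, this yields both inequalities.
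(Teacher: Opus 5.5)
Your proposal is correct and matches the paper, which offers no argument beyond citing Bourgain's $\varepsilon$-removal together with the Bourgain--Demeter decoupling estimate at $p=\frac83$; this is exactly the package that Killip--Vi\c{s}an's Theorem 1.1 provides for $p>\frac{2(d+2)}{d}$. Your Galilean reduction of the cube estimate to the origin-centred case is the standard step the paper leaves implicit, and you carry it out correctly (if the cube's centre is not a lattice point, round it to the nearest one first).
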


By \Cref{prop-pre-transpri} and \Cref{prop-pre-Stri}, we obtain the following corollary:
\begin{corollary}\label{cor-strichartz}
	If $p>\frac{8}{3}$, for any $v\in U^p_\Delta([-1, 1])$,
	\[
	\|P_{\le N} v\|_{L_{t,x}^p([-1,1]\times\T^6)} \lesssim_p N^{3-\frac{8}{p}}
	\|P_{\le N}v\|_{U^p_\Delta L^2}\lesssim N^{3-\frac{8}{p}}\|P_{\le N}v\|_{Y^0([0,1])},
	\]
	and
	\[
	\|P_C v\|_{L^p([-1,1]\times\T^3)} \lesssim_p N^{3-\frac{8}{p}}
	\|v\|_{U^p_\Delta([-1, 1])},
	\]
	where $C$ is a cube of side length $N$.
\end{corollary}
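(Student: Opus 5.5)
The corollary follows by combining \Cref{prop-pre-Stri} with the transfer principle \Cref{prop-pre-transpri}, and then comparing $U^p_\Delta$ with $Y^0$ through the embeddings of \Cref{prop-pre-Embed} and \eqref{fml-pre-Embed-XY}.

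\emph{Step 1: the linear estimate for $P_{\le N}$.} For the first inequality, I would first pass from $P_N$ to $P_{\le N}$ at the level of free solutions. Write
\[
P_{\le N}e^{it\Delta}f=\sum_{M\le N}P_Me^{it\Delta}f .
\]
By the triangle inequality and \Cref{prop-pre-Stri},
\[
\|P_{\le N}e^{it\Delta}f\|_{L^p_{t,x}([-1,1]\times\T^6)}\lesssim_p \sum_{M\le N}M^{3-\frac8p}\|f\|_{L^2}.
\]
The exponent $3-\frac8p$ is positive because $p>\frac83$, so the dyadic sum is geometric and is bounded by $C_pN^{3-\frac8p}\|f\|_{L^2}$. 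Alternatively, $P_{\le N}$ is supported in a cube of side $\sim N$, so the cube version of \Cref{prop-pre-Stri} applies directly.

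\emph{Step 2: transfer to $U^p_\Delta$.} Apply \Cref{prop-pre-transpri} with $m=1$ and $T_0(\phi)=1_{[-1,1]}(t)P_{\le N}\phi$; the time cutoff is harmless, since an atom restricted to $[-1,1]$ is still an atom. This gives
\[
\|P_{\le N}v\|_{L^p_{t,x}}\lesssim_p N^{3-\frac8p}\|v\|_{U^p_\Delta L^2}.
\]
Because $P_{\le N}$ commutes with $e^{it\Delta}$, I can apply the estimate to $P_{\le N}v$ itself. This yields the first inequality. The cube statement is obtained in exactly the same way from the cube version of \Cref{prop-pre-Stri}.

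\emph{Step 3: comparison with $Y^0$.} The second inequality on the first line, $\|P_{\le N}v\|_{U^p_\Delta L^2}\lesssim\|P_{\le N}v\|_{Y^0([0,1])}$, uses two embeddings. First, \eqref{fml-pre-Embed-XY} gives $Y^0\hookrightarrow V^2_\Delta L^2$. Second, \Cref{prop-pre-Embed} gives $V^2_\Delta\hookrightarrow U^p_\Delta$ for $p>2$, which holds since $p>\frac83>2$.

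\emph{Main obstacle.} The only delicate point is the restriction to a time interval. $Y^0([0,1])$ is defined as a restriction norm, so I would take a near-optimal extension $w$ of $P_{\le N}v$ from $[0,1]$. I would then apply the global embedding chain to $w$, and use that the $L^p$ norm on the interval depends only on $w$ restricted to $[0,1]$. The mismatch between $[-1,1]$ and $[0,1]$ is handled by shifting time or by covering $[-1,1]$ with unit intervals; the constant is independent of $N$. Finally, I would read the stray $\T^3$ in the second display as $\T^6$.
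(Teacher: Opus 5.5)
Your proposal is correct and is essentially the paper's argument: the paper's proof is the single remark that the corollary follows from \Cref{prop-pre-Stri} and the transfer principle \Cref{prop-pre-transpri}, and your Step 3 makes explicit the embedding chain $Y^0\hookrightarrow V^2_\Delta L^2\hookrightarrow U^p_\Delta L^2$ (valid since $p>2$) that the paper leaves unstated. Drop the ``covering'' option in your last paragraph, since $\|P_{\le N}v\|_{Y^0([0,1])}$ does not see $[-1,0)$. The second inequality can only mean the same interval $I$ with $|I|\le1$ on both sides (which is how it is used later), and your time-translation remark is the right way to get that.
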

\begin{remark}\label{Yembedding}
      By \eqref{pre:Zformula} and \Cref{cor-strichartz}, we have 
      \[\|u\|_{Z(I)}\lesssim \|u\|_{X^1(I)}.\]
\end{remark}
As a corollary, we have following rough bilinear estimates.
\begin{corollary}\label{cubeproducts}\label{loc:cubes}
Let $N_1\gtrsim  N_2\gtrsim1$. Then 
\begin{align}
 \|P_{N_1}u_1P_{N_2}u_2\|_{L_{t,x}^2(I)}
 &\lesssim N_2^2\|P_{N_1}u_1\|_{Y^0(I)}\|P_{N_2}u_2\|_{Y^0(I)},\label{cubehl}\\
 \|P_{\lesssim N_0}(P_{N_1}u_1P_{N_2}u_2)\|_{L_{t,x}^2(I)}
 &\lesssim N_0^2\|P_{N_1}u_1\|_{Y^0(I)}\|P_{N_2}u_2\|_{Y^0(I)},
 \quad N_1\sim N_2\geq N_0.\label{cubehh}
\end{align}
\end{corollary}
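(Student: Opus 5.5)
For \eqref{cubehl}, the plan is to decompose the higher frequency piece into cubes of side length $N_2$ and use orthogonality. Fix $N_1\gtrsim N_2$ and partition $\mathbb Z^6$ into cubes $\{C_k\}$ of side length $N_2$, so that
\[
P_{N_1}u_1=\sum_k P_{C_k}P_{N_1}u_1 .
\]
Since $P_{N_2}u_2$ has Fourier support in a ball of radius $\sim N_2$, each product $P_{C_k}P_{N_1}u_1\cdot P_{N_2}u_2$ has Fourier support in a fixed dilate $\tilde C_k$ of $C_k$. The dilates $\tilde C_k$ have bounded overlap, so by almost orthogonality in $L^2_x$, for each fixed time,
\[
\|P_{N_1}u_1P_{N_2}u_2\|_{L^2_{t,x}}^2\lesssim\sum_k\|P_{C_k}P_{N_1}u_1\,P_{N_2}u_2\|_{L^2_{t,x}}^2 .
\]

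Each term is then estimated by H\"older with $L^4_{t,x}\times L^4_{t,x}$ and \Cref{cor-strichartz} with $p=4>8/3$. This gives the factor $N_2^{3-2}=N_2$ for each of the two pieces, both of which live at frequency scale $N_2$ (a cube or a ball of size $N_2$), so
\[
\|P_{C_k}P_{N_1}u_1\,P_{N_2}u_2\|_{L^2_{t,x}}\lesssim N_2^2\,\|P_{C_k}P_{N_1}u_1\|_{U^4_\Delta}\,\|P_{N_2}u_2\|_{U^4_\Delta}.
\]
To reach $Y^0$ norms, I would use the embedding $V^2\hookrightarrow U^4$ from \Cref{prop-pre-Embed} together with \eqref{fml-pre-Embed-XY}; the corollary already records this passage for $P_{\le N}$, and for cubes it follows the same way. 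Summing over $k$ then uses $\sum_k\|P_{C_k}v\|_{Y^0}^2=\|v\|_{Y^0}^2$. This identity holds because the $Y^0$ norm squared is a sum over frequencies $\xi$ of $V^2$ norms of the individual Fourier coefficients, which is exactly $\ell^2$ over the disjoint cubes. Everything is on an interval $I$ with $|I|\le1$, and the restriction-norm convention extends it to the general $I$ appearing in the definition.

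For \eqref{cubehh}, the argument is the same with the roles reorganized. The output frequency is $\lesssim N_0$, so I would partition both inputs into cubes of side $N_0$. A cube $C_k$ for $P_{N_1}u_1$ only interacts, after projecting to $P_{\lesssim N_0}$, with cubes $C_l$ for $P_{N_2}u_2$ lying in a bounded neighbourhood of $-C_k$, so there are $O(1)$ partners $l(k)$. Applying Cauchy--Schwarz over these $O(1)$ partners, then H\"older with $L^4\times L^4$, gives
\[
\|P_{\lesssim N_0}(P_{N_1}u_1P_{N_2}u_2)\|_{L^2}\lesssim\sum_{k,\,l\sim -k}N_0\|P_{C_k}u_1\|_{Y^0}\,N_0\|P_{C_l}u_2\|_{Y^0}.
\]
This is bounded by $N_0^2\|P_{N_1}u_1\|_{Y^0}\|P_{N_2}u_2\|_{Y^0}$ by Cauchy--Schwarz in $k$, using the bounded number of partners and the $\ell^2$ orthogonality of $Y^0$ over cubes. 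The projection $P_{\lesssim N_0}$ is harmless here because it is bounded on $L^2$ (with smooth multipliers).

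I expect the main obstacle to be the rigorous passage from the cube Strichartz bound for the free evolution to $Y^0$ at the endpoint $p=4$. This requires the transfer principle (\Cref{prop-pre-transpri}) applied to $U^4_\Delta$ and then $V^2\hookrightarrow U^4$, handled separately for each component $u_1$ and $u_2$. The component $u_2$ is adapted to $e^{\frac i2 t\Delta}$, which only rescales frequencies by a constant, so the Strichartz estimate \Cref{prop-pre-Stri} still applies to it with comparable constants.
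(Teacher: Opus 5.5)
Your proposal is correct and follows the paper's proof essentially step for step. For the first estimate you use a bounded-overlap decomposition into cubes of side $N_2$, H\"older in $L^4_{t,x}\times L^4_{t,x}$ with the cube Strichartz estimate at $p=4$ transferred to $Y^0$, and $\ell^2$-orthogonality of $Y^0$ over cubes; for \eqref{cubehh} you use cubes of side $N_0$ with $O(1)$ partners and Cauchy--Schwarz, exactly as the paper does.

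One small adjustment concerns your closing remark about general $I$. The bound is uniform only for $|I|\lesssim 1$, which is how the paper uses it later. On longer intervals the left-hand side can grow like $|I|^{1/2}$ while the $Y^0(I)$ norms, defined through unit-length subintervals, do not.
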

\begin{proof}
Partition the $N_1$ annulus into cubes $C$ of side comparable to $N_2$, so the Fourier supports of
$(P_Cu_1)(P_{N_2}u_2)$ have bounded overlap.  Plancherel, H\"older and
\Cref{cor-strichartz} gives
\begin{align*}
 \|P_{N_1}u_1P_{N_2}u_2\|_2^2
 \leq\sum_C\|P_Cu_1\|_4^2\|P_{N_2}u_2\|_4^2
 \lesssim N_2^4
 \sum_C\|P_Cu_1\|_{Y^0}^2\,
 \|P_{N_2}u_2\|_{Y^0}^2 \lesssim \|P_{N_1}u_1\|_{Y^0}^2  \|P_{N_2}u_2\|_{Y^0}^2.
\end{align*}
Taking square roots proves \eqref{cubehl}.

For \eqref{cubehh}, similarly, partition the $N_1$ and $N_2$ annulus into cubes $C$ and $C'$ of side comparable to $N_0$,
\[
 \|P_{\lesssim N_0}(P_{N_1}u_1P_{N_2}u_2)\|_2
 \lesssim N_0^2\sum_{C\sim C'}
 \|P_C P_{N_1}u_1\|_{Y^0}\|P_{C'}P_{N_2}u_2\|_{Y^0},
\]
and Cauchy--Schwarz gives the result. 
\end{proof}
We recall smooth projection introduced in \cite{HadacKp22009} to handle KP-II equation
\begin{definition}\label{mdef}
Let $S=ia\Delta$, where $a\in\mathbb R$.
Choose an even function $\chi\in C_c^\infty((-2,2))$
such that $\chi=1$ on $[-1,1]$, and set
\[
 \psi(\tau)=\chi(\tau)-\chi(2\tau),
 \qquad
 \psi_M(\tau)=\psi(M^{-1}\tau).
\]
For $M>0$, define Fourier multipliers
$Q_M$ and $Q_{<M}$ by
\[
 \mathcal F_t(Q_Mv)(\tau)
 =\psi_M(\tau)\mathcal F_tv(\tau),
 \qquad
 \mathcal F_t(Q_{<M}v)(\tau)
 =\chi(2\tau/M)\mathcal F_tv(\tau),
\]
and $Q_{\ge M}=I-Q_{<M}$.
The modulation projections associated with $e^{tS}$
are defined by
\[
 Q_M^S=e^{tS}Q_Me^{-tS},
 \qquad
 Q_{<M}^S=e^{tS}Q_{<M}e^{-tS},
 \qquad
 Q_{\ge M}^S=e^{tS}Q_{\ge M}e^{-tS}.
\]
Equivalently,
\begin{align*}
 \mathcal F_{t,x}(Q_M^Su)(\tau,\xi)
 &=
 \psi_M(\tau+a|\xi|^2)
 \mathcal F_{t,x}u(\tau,\xi),\\
 \mathcal F_{t,x}(Q_{<M}^Su)(\tau,\xi)
 &=
 \chi\!\left(\frac{2(\tau+a|\xi|^2)}{M}\right)
 \mathcal F_{t,x}u(\tau,\xi),
\end{align*}
and $Q_{\ge M}^S=I-Q_{<M}^S$.
\end{definition}
\begin{remark}
    For the first and second components of the system \eqref{eq:system1},
we take $S=i\Delta$ and $S=\frac{i}{2}\Delta$,
respectively.
\end{remark}
We consider a high-frequency estimate of this projection.
\begin{proposition}\label{mM}
For every $M>0$ and $u\in Y^0$, one has
\begin{align}\label{2.9}
 \|Q_{\ge M}^S u\|_{L^2(\mathbb R\times\mathbb T^6)}
 \lesssim M^{-1/2}\|u\|_{Y^0}.
\end{align}
\end{proposition}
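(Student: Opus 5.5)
The plan is to conjugate away the linear flow and reduce \eqref{2.9} to a statement about scalar $V^2$ functions of time, one Fourier coefficient at a time.

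\emph{Step 1: reduction to time-only functions.} Set $v(t):=e^{-tS}u(t)$. By the definition of $Q^S_{\ge M}$,
\[
e^{-tS}Q^S_{\ge M}u=Q_{\ge M}v .
\]
Since $e^{tS}$ is unitary on $L^2(\mathbb T^6)$ for each fixed $t$, and $Q_{\ge M}$ acts only in the time variable, Plancherel in $x$ gives
\[
\|Q^S_{\ge M}u\|_{L^2(\mathbb R\times\mathbb T^6)}^2=\sum_{\xi\in\mathbb Z^6}\|Q_{\ge M}\widehat v(\cdot,\xi)\|_{L^2_t}^2 .
\]
On the other side, $\|u\|_{Y^0}^2$ dominates $\sum_\xi\|\widehat v(\cdot,\xi)\|_{V^2}^2$. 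This holds with the phase $e^{ia t|\xi|^2}$ matching $S=ia\Delta$, i.e.\ $a=1$ or $a=\tfrac12$ according to the remark after \Cref{mdef}. It therefore suffices to prove the scalar estimate
\[
\|Q_{\ge M}w\|_{L^2(\mathbb R)}\lesssim M^{-1/2}\|w\|_{V^2}
\]
for every $w\in V^2_{rc}(\mathbb R;\mathbb C)$, and then sum over $\xi$.

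\emph{Step 2: the scalar estimate.} I would avoid an atomic decomposition, because $V^2$ is not atomic. Instead I would use the quadratic-variation bound directly. Write
\[
Q_{\ge M}=\sum_{L\ge M/2}Q_L
\]
in dyadic $L$, where $\psi_L$ has mean zero in the sense that $\check\psi_L$ is a Schwartz kernel $L\,\check\psi(L\cdot)$ with zero integral. Then
\[
Q_Lw(t)=\int L\check\psi(Ls)\bigl(w(t-s)-w(t)\bigr)\,ds .
\]
By Minkowski and the rapid decay of $\check\psi$, the quantity $\|Q_Lw\|_{L^2}^2$ is controlled by
\[
\sup_{|h|\lesssim L^{-1}}\|w(\cdot-h)-w\|_{L^2}^2,
\]
up to Schwartz tails in $h$. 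The key fact is the standard inequality
\[
\|w(\cdot-h)-w\|_{L^2_t}^2\lesssim |h|\,\|w\|_{V^2}^2 .
\]
To prove it, average $|w(t-h)-w(t)|^2$ over $t$ in blocks of length $|h|$, so that for each $t\in[0,|h|)$ the grid $\{t+k|h|\}_k$ forms a partition. The $V^2$ definition bounds $\sum_k|w(t+(k+1)|h|)-w(t+k|h|)|^2\le\|w\|_{V^2}^2$, and integrating over $t\in[0,|h|)$ yields the claim. The tails in $h$ are handled the same way, which gives a factor $|h|$ times Schwartz decay.

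\emph{Step 3: summation and the main obstacle.} Step 2 gives $\|Q_Lw\|_{L^2}^2\lesssim L^{-1}\|w\|_{V^2}^2$. Summing these bounds naively over $L\ge M$ gives only $\|Q_{\ge M}w\|_{L^2}\lesssim\sum_L L^{-1/2}\|w\|_{V^2}\sim M^{-1/2}\|w\|_{V^2}$. This already suffices, since $\sum_{L\ge M}L^{-1/2}$ is geometric and so almost-orthogonality is not even needed. Alternatively, one can bypass the decomposition and write
\[
Q_{\ge M}w=w-\chi(2\cdot/M)^\vee*w,
\]
whose kernel is $M\check\chi(M\cdot)$ with unit mass, and apply Step 2 directly at scale $M^{-1}$.

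The delicate point I expect is rigor at the level of $V^2$ functions with nonzero limit at $-\infty$, where $w$ need not lie in $L^2_t$. One should work with differences only, which is exactly what $I-Q_{<M}$ produces since its symbol vanishes at $\tau=0$. This can be justified by first treating step functions, which are dense in $V^2_{rc}$ modulo the relevant convergence, and then passing to the limit. Summing over $\xi$ and over the two components concludes \eqref{2.9}.
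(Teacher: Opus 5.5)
Your proposal is correct and follows the paper's route. You conjugate by $e^{-tS}$, use Plancherel in $x$ to reduce to the scalar bound $\|Q_{\ge M}w\|_{L^2_t}\lesssim M^{-1/2}\|w\|_{V^2}$ for each Fourier coefficient, and sum the dyadic pieces $Q_{2^jM}$ geometrically, exactly as the paper does. The only difference is that the paper cites the embedding $V^2\hookrightarrow\dot B^{1/2}_{2,\infty}$ from Hadac--Herr--Koch, while your difference bound $\|w(\cdot-h)-w\|_{L^2_t}\lesssim|h|^{1/2}\|w\|_{V^2}$, combined with the mean-zero kernel and Minkowski, is a self-contained proof of that same embedding; it also correctly handles the fact that $w$ need not lie in $L^2_t$.
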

\begin{proof}
Let $v=e^{-tS}u$. By the definitions, $e^{-tS}Q_{\ge M}^Su=Q_{\ge M}v$ and 
 $e^{-tS}Q_{<M}^Su=Q_{<M}v$.

The Besov embedding
$V^2\hookrightarrow\dot B^{1/2}_{2,\infty}$
of~\cite{HadacKp22009}
gives, for every $\xi\in\mathbb Z^6$ and $j\ge0$,
\[
 \|Q_{2^jM}\widehat v(\cdot,\xi)\|_{L^2_t}
 \lesssim (2^jM)^{-1/2}
          \|\widehat v(\cdot,\xi)\|_{V^2}.
\]
So 
\begin{align*}
 \|Q_{\ge M}\widehat v(\cdot,\xi)\|_{L^2_t}
 \le
 \sum_{j=0}^{\infty}
 \|Q_{2^jM}\widehat v(\cdot,\xi)\|_{L^2_t}
 \lesssim
 \sum_{j=0}^{\infty}(2^jM)^{-1/2}
 \|\widehat v(\cdot,\xi)\|_{V^2}
 \lesssim
 M^{-1/2}\|\widehat v(\cdot,\xi)\|_{V^2}.
\end{align*}
By unitarity of $e^{tS}$ and Plancherel,
\begin{align*}
 \|Q_{\ge M}^Su\|_{L^2_{t,x}}^2
 =\|Q_{\ge M}v\|_{L^2_{t,x}}^2
=\sum_{\xi\in\mathbb Z^6}
   \|Q_{\ge M}\widehat v(\cdot,\xi)\|_{L^2_t}^2
 \lesssim
 M^{-1}\sum_{\xi\in\mathbb Z^6}
 \|\widehat v(\cdot,\xi)\|_{V^2}^2
 =M^{-1}\|u\|_{Y^0}^2.
\end{align*}
Taking square roots proves \eqref{2.9}.
\end{proof}
\begin{remark}
    Notice that we obtain a gain in the high-frequency estimates. It's important for us.
\end{remark}
\section{Local well-posedness and stability theory}\label{se3}
First, we proving multilinear estimates to handle nonlinear term.
\subsection{Multilinear estimates}
Before prove multilinear estimate, we get a bilinear estimate first.
\begin{lemma}
      Let $I\subset\mathbb R$ be an interval with $|I|\le1$. Fix $M>0$ and, for each $j$, let
\[
	 Q_j\in\{\mathrm{Id},Q_{<M}^{S_j},Q_{\ge M}^{S_j}\}.
\]
For distinct $j,k\in\{0,1,2\}$, $f_j, f_k \in Y^0(I)$ and we extend them by zero outside $I$. For convenience, we also donate it by $f_j, f_k$, then we have 
\begin{equation}\label{two}
	 \|(Q_jf_j)(Q_kf_k)\|_{L^2_{t,x}(\mathbb R\times\mathbb T^6)}
 \lesssim \min\{N_j,N_k\}^2
 \|f_j\|_{Y^0}\|f_k\|_{Y^0},
\end{equation}
and, when $N_j\sim N_k\ge N\ge1$,
\begin{equation}\label{simtwo}
	 \|P_{\lesssim N}((Q_jf_j)(Q_kf_k))
       \|_{L^2_{t,x}(\mathbb R\times\mathbb T^6)}
 \lesssim N^2
 \|f_j\|_{Y^0}\|f_k\|_{Y^0}.
\end{equation}
\end{lemma}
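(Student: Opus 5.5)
We reduce to \Cref{cubeproducts} (i.e. \eqref{cubehl} and \eqref{cubehh}). The key observation is that the modulation projections $Q_{<M}^{S_j}$ and $Q_{\ge M}^{S_j}$ are bounded on $Y^0$, uniformly in $M$, and commute with spatial Fourier projections. Implicitly $f_j$ is frequency localized at $N_j$, i.e. $f_j=P_{N_j}f_j$.

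\textbf{Step 1: boundedness of the modulation projections.} Write $v=e^{-tS_j}f_j$. Then $e^{-tS_j}Q^{S_j}_{<M}f_j=Q_{<M}v$, and $Q_{<M}$ is convolution in time with the $L^1$ kernel $\frac M2\check\chi(\frac M2\cdot)$, whose $L^1$ norm is independent of $M$. Convolution in time with an $L^1$ kernel is bounded on $V^2(\mathbb R;\mathbb C)$. Indeed, $v*\rho=\int v(\cdot-s)\rho(s)\,ds$ is an average of translates, the $V^2$ norm is translation invariant, and Minkowski's inequality applies to the variation sum; right-continuity and the limit at $-\infty$ are preserved. Applying this to each Fourier coefficient $\widehat v(\cdot,\xi)$ gives
\[
\|Q_{<M}^{S_j}f_j\|_{Y^0}\lesssim\|f_j\|_{Y^0},\qquad \|Q_{\ge M}^{S_j}f_j\|_{Y^0}\lesssim \|f_j\|_{Y^0}.
\]
The second bound follows from the first since $Q_{\ge M}=I-Q_{<M}$. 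For $S_j=\frac i2\Delta$ we use the weight $e^{\frac i2t|\xi|^2}$ from \Cref{def-pre-Xs-Ys}; the argument is identical. Since the multipliers depend only on $\tau+a|\xi|^2$, they commute with $P_N$ and with $P_C$ for cubes $C$, so $Q_jf_j$ keeps the frequency support of $f_j$.

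\textbf{Step 2: the bilinear bounds.} The zero extension of $f_j\in Y^0(I)$ lies in $Y^0$ with comparable norm, by the restriction definition and the fact that $V^2$ tolerates jumps. The functions $Q_jf_j$ are no longer supported in $I$, so \Cref{cor-strichartz} on $[-1,1]$ cannot be used directly. Instead we partition $\mathbb R$ into unit intervals $J_\ell$ and apply the Strichartz bound on each $J_\ell$. Summing over $\ell$ requires $\sum_\ell\|g\|_{Y^0(J_\ell)}^2\lesssim\|g\|_{Y^0}^2$, which holds because $V^2$ is $\ell^2$-superadditive over disjoint intervals.

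We then redo the proof of \Cref{cubeproducts}. Assume $N_j\ge N_k$, partition the $N_j$-annulus into cubes $C$ of side $N_k$, and use almost orthogonality together with Hölder in $L^4$. \Cref{cor-strichartz} with $p=4$ gives the factor $N_k^{1}$ for each $L^4$ norm, so $N_k^2$ overall. Finally we sum over cubes using $\sum_C\|P_CQ_jf_j\|_{Y^0}^2\lesssim\|Q_jf_j\|_{Y^0}^2\lesssim\|f_j\|_{Y^0}^2$ by Step 1. The bound \eqref{simtwo} follows in the same way: partition both annuli into cubes of side $N$, note that only pairs $C\sim C'$ contribute to $P_{\lesssim N}$, and apply Cauchy--Schwarz over these pairs.

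\textbf{Expected main obstacle.} The delicate point is the global-in-time issue. The projection $Q_{<M}^{S}$ destroys compact time support, so the $L^4$ Strichartz bound must be summed over unit intervals. Two facts need checking: the unit-interval sum $\sum_\ell\|g\|_{Y^0(J_\ell)}^2\lesssim\|g\|^2_{Y^0}$, and that $\|P_Cg\|_{U^4_\Delta(J_\ell)}\lesssim\|P_Cg\|_{Y^0(J_\ell)}$, which follows from $V^2\hookrightarrow U^4$.

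If the unit-interval summation proves troublesome, the fallback is an alternative route for the high-modulation pieces. For $Q_{\ge M}$ with $M\gtrsim1$, estimate one factor in $L^2_{t,x}$ via \Cref{mM} and the other via Bernstein in $L^\infty_{t,x}$, which gives $N_k^3\|f_k\|_{L^\infty_tL^2_x}$. This loses relative to $N_k^2$ unless one uses $M^{-1/2}$, so this route is less clean. We therefore prefer the boundedness argument of Step 1.
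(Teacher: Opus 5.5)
There is a genuine gap, and it sits exactly at the step you flagged as the main obstacle. Your claim $\sum_\ell\|g\|_{Y^0(J_\ell)}^2\lesssim\|g\|_{Y^0}^2$ is false for general $g\in Y^0$. Take the free solution $g=e^{it\Delta}\phi$ on all of $\mathbb R$. Then $\|g\|_{Y^0}\lesssim\|\phi\|_{L^2}$, but $Y^0(J_\ell)\hookrightarrow L^\infty_tL^2_x(J_\ell)$ forces $\|g\|_{Y^0(J_\ell)}\gtrsim\|\phi\|_{L^2}$ for every $\ell$, so the sum diverges.

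Only the $2$-variation seminorm is $\ell^2$-superadditive over disjoint intervals. The restricted norm must also control $\sup_{t\in J_\ell}\|e^{-tS}g(t)\|_{L^2}$, and that part does not sum. No global estimate rescues this either: on $\mathbb T^6$ there is no dispersion, and $\|e^{it\Delta}\phi\|_{L^4_{t,x}(\mathbb R\times\mathbb T^6)}=\infty$. So your Step 1, which records only $\|Q^{S_j}_{<M}f_j\|_{Y^0}\lesssim\|f_j\|_{Y^0}$, discards the one fact that makes the lemma true: $Q^{S_j}_{<M}f_j$ decays in time away from $I$.

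This is not a peripheral case. Products with an $\mathrm{Id}$ factor are supported in $I$, and your Step 1 handles them. But after writing $Q_{\ge M}=\mathrm{Id}-Q_{<M}$, every other case contains $(Q^{S_j}_{<M}f_j)(Q^{S_k}_{<M}f_k)$. That product is not time-localized, and it is exactly the one used in \Cref{prop:trilinear-frequency-bound}.

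The paper avoids $Y^0$-boundedness altogether. It writes $Q^{S_j}_{<M}f_j(t)=\int K_M(r)\,e^{rS_j}f_j(t-r)\,dr$ with $\|K_M\|_{L^1}=\|K_1\|_{L^1}$. Each translate $e^{rS_j}f_j(\cdot-r)$ has the same $Y^0$ norm and is supported on $I+r$. So the product of two translates lives on $(I+r)\cap(I+s)$, an interval of length at most one, where \eqref{cubehl} and \eqref{cubehh} apply directly. Minkowski's inequality in $(r,s)$ then finishes the argument.

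If you want to keep your unit-interval route, replace the false lemma with one specific to $g=Q^{S}_{<M}f$ with $f$ supported in $I$. Set $w=e^{-tS}f$ and $v=e^{-tS}g=Q_{<M}w$. Mode by mode, $\sup_{t\in J_\ell}|\widehat v(t,\xi)|\le a_\ell\sup_s|\widehat w(s,\xi)|$, where $a_\ell=\sup_{t\in J_\ell}\int_I|K_M(t-s)|\,ds$. One checks $\sum_\ell a_\ell^2\lesssim1$ uniformly in $M$: for $M\ge1$ this follows from the tails of $K_1$, and for $M\le1$ from $a_\ell\lesssim M(1+M\,\mathrm{dist}(J_\ell,I))^{-10}$. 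Together with the genuine superadditivity of the $2$-variation, this gives $\sum_\ell\|Q^S_{<M}f\|_{Y^0(J_\ell)}^2\lesssim\|f\|_{Y^0}^2$. Bounding the other factor on each $J_\ell$ by its global $Y^0$ norm then closes the argument.
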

\begin{proof} 
First, we consider the case that $Q_j= Q^{S_j}_{<M}$ and $Q_k= Q^{S_k}_{<M}$. We set $K_M$ is the convolution kernel of $Q_{<M}$ from
\Cref{mdef}, then $K_M(r)=MK_1(Mr)$ and
$\|K_M\|_{L^1}=\|K_1\|_{L^1}<\infty$,
so
\[
 Q_{<M}^{S_j}f_j(t)
 =\int_{\mathbb R}K_M(r)e^{rS_j}f_j(t-r)\,dr.
\]
Notice that $e^{rS_j}f_j(t-r)$
is supported in
$I+r$, and it satisfies $
 \|e^{rS_j}f_j(\cdot-r)\|_{Y^0}
 =\|f_j\|_{Y^0}$. 
Consequently, $(e^{rS_j}f_j(\cdot-r))(e^{sS_k}f_k(\cdot-s))$ is supported in
$(I+r)\cap(I+s)$, an interval of length at most one.
So by \eqref{cubehl},
\[
 \|(e^{rS_j}f_j(\cdot-r))(e^{sS_k}f_k(\cdot-s))\|_{L^2_{t,x}}
 \lesssim\min\{N_j,N_k\}^2
 \|f_j\|_{Y^0}\|f_k\|_{Y^0}.
\]
Minkowski's inequality therefore gives
\begin{align*}
 \|(Q_{<M}^{S_j}f_j)(Q_{<M}^{S_k}f_k)\|_{L^2_{t,x}}
 &\le\int_{\mathbb R^2}|K_M(r)K_M(s)|
 \|(e^{rS_j}f_j(\cdot-r))(e^{sS_k}f_k(\cdot-s))\|_{L^2_{t,x}}
 \,dr\,ds\\
 &\lesssim\|K_1\|_{L^1}^2\min\{N_j,N_k\}^2
 \|f_j\|_{Y^0}\|f_k\|_{Y^0}.
\end{align*}
Suppose that
$N_j\sim N_k\ge N\ge1$. Similarly, Minkowski's inequality and \eqref{cubehh} yields
\begin{align*}
 &\quad \left\|P_{\lesssim N}
 \bigl((Q_{<M}^{S_j}f_j)(Q_{<M}^{S_k}f_k)\bigr)
 \right\|_{L^2_{t,x}}\\
 &\le\int_{\mathbb R^2}|K_M(r)K_M(s)|
 \left\|P_{\lesssim N}
 \bigl((e^{rS_j}f_j(\cdot-r))(e^{sS_k}f_k(\cdot-s))\bigr)
 \right\|_{L^2_{t,x}}\,dr\,ds\\
 &\lesssim\|K_M\|_{L^1}^2N^2
 \|f_j\|_{Y^0}\|f_k\|_{Y^0}
 \lesssim N^2\|f_j\|_{Y^0}\|f_k\|_{Y^0}.
\end{align*}
Then we consider the case that $Q_j= Q^{S_j}_{<M}$. We have
\[
 (Q_{<M}^{S_j}f_j)(t)f_k(t)
 =\int_{\mathbb R}K_M(r)
 (e^{rS_j}f_j(t-r))f_k(t)\,dr.
\]
Similarly, $(e^{rS_j}f_j(t-r))f_k(t)$ is supported in
$(I+r)\cap I$. Applying Minkowski's inequality and \eqref{cubehl} gives
\begin{align*}
 \|(Q_{<M}^{S_j}f_j)f_k\|_{L^2_{t,x}}
 &\le\int_{\mathbb R}|K_M(r)|
 \|(e^{rS_j}f_j(\cdot-r))f_k\|_{L^2_{t,x}}\,dr\\
 &\lesssim\|K_M\|_{L^1}\min\{N_j,N_k\}^2
 \|f_j\|_{Y^0}\|f_k\|_{Y^0}\\
 &\lesssim\min\{N_j,N_k\}^2
 \|f_j\|_{Y^0}\|f_k\|_{Y^0}.
\end{align*}
When $N_j\sim N_k\ge N\ge1$, applying \eqref{cubehh} instead gives
\begin{align*}
 \|P_{\lesssim N}((Q_{<M}^{S_j}f_j)f_k)\|_{L^2_{t,x}}
 &\le\int_{\mathbb R}|K_M(r)|
 \|P_{\lesssim N}((e^{rS_j}f_j(\cdot-r))f_k)\|_{L^2_{t,x}}\,dr\\
 &\lesssim\|K_M\|_{L^1}N^2
 \|f_j\|_{Y^0}\|f_k\|_{Y^0}
 \lesssim N^2\|f_j\|_{Y^0}\|f_k\|_{Y^0}.
\end{align*}
If $Q_j=Q_k= Id$, \eqref{two}, \eqref{simtwo} follow directly from \eqref{cubehh} and \eqref{cubehl}.

It remains to treat $Q_j= Q^{S_j}_{\ge M}$ or $Q_k= Q^{S_k}_{\ge M}$. Since
$Q_{\ge M}^{S_j}=\mathrm{Id}-Q_{<M}^{S_j}$, we have
\begin{align*}
 (Q_{\ge M}^{S_j}f_j)f_k
 &=f_jf_k-(Q_{<M}^{S_j}f_j)f_k,\\
 (Q_{\ge M}^{S_j}f_j)(Q_{<M}^{S_k}f_k)
 &=f_j(Q_{<M}^{S_k}f_k)
   -(Q_{<M}^{S_j}f_j)(Q_{<M}^{S_k}f_k),
\end{align*}
and
\begin{align*}
 &(Q_{\ge M}^{S_j}f_j)(Q_{\ge M}^{S_k}f_k)=f_jf_k-(Q_{<M}^{S_j}f_j)f_k
       -f_j(Q_{<M}^{S_k}f_k)
       +(Q_{<M}^{S_j}f_j)(Q_{<M}^{S_k}f_k).
\end{align*}
Every term on the right-hand sides has already been estimated.
So by the triangle inequality, we prove \eqref{two}, \eqref{simtwo} for all choices of $Q_j$ and $Q_k$.
\end{proof}
\begin{proposition}\label{prop:trilinear-frequency-bound}
Let $I\subset\mathbb R$ be an interval with $|I|\le1$, and let
$N_0,N_1,N_2\ge1$ be dyadic numbers with $N_1\ge N_2$.
For $j=0,1,2$, let $u_j$ be a scalar function in $Y^0(I)$ adapted
to $e^{tS_j}$, where $S_j=ia_j\Delta$ and
\[
 (a_0,a_1,a_2)\in
 \left\{
 \left(1,\tfrac12,-1\right),
 \left(1,-1,\tfrac12\right),
 \left(\tfrac12,1,1\right)
 \right\}.
\]
Then
\begin{equation}\label{eq:trilinear-frequency-bound}
\begin{aligned}
 &\quad \left|\int_{I\times\mathbb T^6}
   (P_{N_1}u_1)(P_{N_2}u_2)
   \overline{P_{N_0}u_0}\,dx\,dt\right|\\
 &\lesssim
 \left(N_0^{-1}\|P_{N_0}u_0\|_{Y^0(I)}\right)
 \prod_{j=1}^2\left(N_j\|P_{N_j}u_j\|_{Y^0(I)}\right)\times
 \begin{cases}
  1, & N_0\sim N_1\sim N_2,\\
  N_2/N_1, & N_0\sim N_1\gg N_2,\\
  (N_0/N_1)^3, & N_1\sim N_2\gg N_0.
 \end{cases}
\end{aligned}
\end{equation}
\end{proposition}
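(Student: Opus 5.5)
The plan is to write the integrand as a product of two bilinear pieces, apply Cauchy--Schwarz in $L^2_{t,x}$, and estimate each pair with the bilinear bounds \eqref{two} and \eqref{simtwo} of the preceding lemma. In the three cases the claimed factors, after removing the normalising weights $N_0^{-1}N_1N_2$, become: $N_1^2\cdot N_1^{-1}$ when all frequencies are comparable; $N_0 N_1^{-1}N_2^{-1}\cdot N_2/N_1\sim N_1^{-1}$ when $N_0\sim N_1\gg N_2$; and $N_0N_1^{-2}(N_0/N_1)^3\sim N_0^4N_1^{-5}$ when $N_1\sim N_2\gg N_0$. So the target is a bound of the trilinear form by $\|P_{N_0}u_0\|_{Y^0}\|P_{N_1}u_1\|_{Y^0}\|P_{N_2}u_2\|_{Y^0}$ times, respectively, $N_1$ (all comparable), $N_2^2\cdot N_2^{-1}$ (high--high--low), and $N_0^4N_1^{-5}$ (high--high into low). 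The plain Hölder and bilinear bounds produce $N_{\min}^2$. The missing half derivative in the first two cases, and the gain $N_1^{-1}$ type factors in the third, must come from modulation, through Proposition~\ref{mM}.

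The key step is a resonance computation. For each admissible triple $(a_0,a_1,a_2)$, take $\xi_0=\xi_1+\xi_2$ with $|\xi_j|\sim N_j$. The modulation identity is
\[
(\tau_1+a_1|\xi_1|^2)+(\tau_2+a_2|\xi_2|^2)-(\tau_0+a_0|\xi_0|^2)=a_1|\xi_1|^2+a_2|\xi_2|^2-a_0|\xi_0|^2,
\]
using $\tau_0=\tau_1+\tau_2$. For $(1,\tfrac12,-1)$ the right side equals $\tfrac12|\xi_1|^2-|\xi_2|^2-|\xi_1+\xi_2|^2=-\tfrac12|\xi_1+2\xi_2|^2$, and the other two triples give similar negative-definite quadratic forms; this is exactly where mass resonance enters. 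With $M$ chosen as the size of this quantity, we decompose each $u_j=Q^{S_j}_{<M}u_j+Q^{S_j}_{\ge M}u_j$. The all-low-modulation term vanishes whenever the resonance function has size $\gtrsim M$. Any term containing a factor $Q_{\ge M}$ is handled by putting that factor in $L^2$ via Proposition~\ref{mM}, which gains $M^{-1/2}$, and putting the other two factors in $L^2$ via \eqref{two} or \eqref{simtwo}, which is legitimate because the lemma allows arbitrary $Q_j$.

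\textbf{Main obstacle: the resonance function can be degenerate}, for example $\xi_1+2\xi_2\approx0$, so the lower bound $\gtrsim N_1^2$ does not hold uniformly. I would first try a further decomposition into cubes or angular sectors of size $N_2$ (or $N_0$). On each piece the quadratic form either is $\gtrsim N_{\max}^2$ or the piece is confined to a thin region. On the thin region one gains through \Cref{cor-strichartz} applied on a smaller cube $P_C$, and then sums with Cauchy--Schwarz over the bounded-overlap cubes, exactly as in the proof of \Cref{cubeproducts}.

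Concretely, in the case $N_1\sim N_2\gg N_0$ one needs $|\xi_1+2\xi_2|\gtrsim N_1$. Since $\xi_1+\xi_2=\xi_0$ is small, $\xi_1+2\xi_2=\xi_0+\xi_2$, which has size $\sim N_1$, so this case is genuinely nonresonant with $M\sim N_1^2$. That yields $N_0^2\cdot N_1^{-1}$ from \eqref{simtwo} and Proposition~\ref{mM}, and I expect the remaining power to come from the $N_0$-cube decomposition. The comparable-frequency case may simply follow from the $L^3$ Strichartz bound $\|P_N e^{it\Delta}f\|_{L^3}\lesssim N^{1/3}\|f\|_{L^2}$ applied to all three factors via Hölder, giving $N^{1}$, which is exactly what is needed. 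Checking whether the resonance gain in the remaining case can be made uniform across the three admissible sign patterns is the step I would verify last.
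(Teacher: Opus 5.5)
Your ingredients are the same as in the paper's proof. For $N_0\sim N_1\sim N_2$ you use H\"older with the $L^3$ Strichartz bound. Otherwise you split $f_j=Q^{S_j}_{<M}f_j+Q^{S_j}_{\ge M}f_j$ with $M=cN_1^2$, note that the all-low-modulation term vanishes, apply the $M^{-1/2}$ gain of Proposition~\ref{mM} to one high-modulation factor, and use \eqref{two} or \eqref{simtwo} on the remaining pair.

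\emph{The error.} As written, the proposal does not close, because the targets in the last two cases are miscomputed. There you multiplied the case factor by $N_0N_1^{-1}N_2^{-1}$ instead of by $N_0^{-1}N_1N_2$. The statement actually asks for the trilinear form to be bounded by $\prod_j\|P_{N_j}u_j\|_{Y^0}$ times $N_1$, $N_2^2/N_0\sim N_2^2/N_1$, and $N_0^2N_2/N_1^2\sim N_0^2/N_1$ in the three cases.

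\emph{Case $N_1\sim N_2\gg N_0$.} The bound $N_0^2N_1^{-1}$ that you derive here is already the full claim. The ``remaining power'' you hope to get from an $N_0$-cube decomposition does not exist, and your target $N_0^4N_1^{-5}$ is false. To see this, take $N_0\sim 1$ and plane waves $f_j=e^{i(\xi_j\cdot x-a_j|\xi_j|^2t)}$, $j=1,2$, with $|\xi_1|\sim|\xi_2|\sim N_1$ and $|\xi_1+\xi_2|\sim 1$. Set $f_0=\mathbf{1}_I f_1f_2$ with $|I|=1$. Then:
\begin{itemize}
\item the integral equals $(2\pi)^6$;
\item $\|f_1\|_{Y^0},\|f_2\|_{Y^0}\sim 1$;
\item the Fourier coefficient of $e^{-tS_0}f_0$ is $e^{i\Omega t}$, where $|\Omega|\sim N_1^2$ is the resonance function, so $\|f_0\|_{Y^0}\sim|\Omega|^{1/2}\sim N_1$.
\end{itemize}
The ratio is therefore $\sim N_1^{-1}=N_0^2N_1^{-1}$, not $N_1^{-5}$. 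That step would fail, but it is not needed.

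\emph{Case $N_0\sim N_1\gg N_2$.} Your scheme gives exactly the required $N_2^2N_1^{-1}$. If the high modulation sits on $f_0$ or $f_1$, use \eqref{two} on the pair containing $f_2$. If it sits on $f_2$, use \eqref{simtwo} on $P_{\lesssim N_2}(f_1\overline{f_0})$.

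\emph{The degenerate-resonance obstacle does not arise.} In both cases where modulation is used, the resonance is $\gtrsim N_1^2$ uniformly for all three admissible triples.
\begin{itemize}
\item If $N_0\sim N_1\gg N_2$, each of $|\xi_1+2\xi_2|$, $|2\xi_1+\xi_2|$, $|\xi_1-\xi_2|$ is at least $|\xi_1|-2|\xi_2|\gtrsim N_1$.
\item If $N_1\sim N_2\gg N_0$, the identities $\xi_1+2\xi_2=\xi_0+\xi_2$, $2\xi_1+\xi_2=\xi_0+\xi_1$ and $\xi_1-\xi_2=2\xi_1-\xi_0$ give the same bound, as you already found for the first triple.
\end{itemize}
The near-degenerate configurations $\xi_1\approx-2\xi_2$, $\xi_2\approx-2\xi_1$ and $\xi_1\approx\xi_2$ all force $N_0\sim N_1\sim N_2$. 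That case is handled by H\"older with no modulation analysis. So no angular or cube refinement is needed, and with the bookkeeping corrected your argument is exactly the paper's proof.
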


\begin{proof}
If $N_0\sim N_1\sim N_2$, H\"older's inequality and \Cref{cor-strichartz} yields
\begin{align*}
 \left|\int_{I\times\mathbb T^6}
 (P_{N_1}u_1)(P_{N_2}u_2)
 \overline{P_{N_0}u_0}\,dx\,dt\right|
 &\le\prod_{j=0}^2\|P_{N_j}u_j\|_{L^3_{t,x}(I\times\mathbb T^6)}\lesssim\prod_{j=0}^2
 N_j^{1/3}\|P_{N_j}u_j\|_{Y^0(I)}\\
 &\lesssim\frac{N_1N_2}{N_0}
 \prod_{j=0}^2\|P_{N_j}u_j\|_{Y^0(I)}.
\end{align*}
This proves the first case.

For other cases, we extend each $P_{N_j}u_j$ by
zero outside $I$, and denote this extension by $f_j$. By definition of $Y^s$, we obtain $
 \|f_j\|_{Y^0}
 \sim\|P_{N_j}u_j\|_{Y^0(I)}$.
The Fourier support of $f_j$ remains in the support of
$P_{N_j}$, and
\[
 \int_{I\times\mathbb T^6}
 (P_{N_1}u_1)(P_{N_2}u_2)\overline{P_{N_0}u_0}\,dx\,dt
 =\int_{\mathbb R\times\mathbb T^6}f_1f_2\overline{f_0}\,dx\,dt.
\]

We next use the resonance relation. In frequency space, for $\hat{f_i}(\tau_i,\xi_i)$, $i=0,1,2$, one has
$\xi_0=\xi_1+\xi_2$ and $\tau_0=\tau_1+\tau_2$.
So 
\[
\begin{aligned}
(\tau_0+a_0|\xi_0|^2)
  -(\tau_1+a_1|\xi_1|^2)
  -(\tau_2+a_2|\xi_2|^2)
 =
 \begin{cases}
 \frac12|\xi_1+2\xi_2|^2,
       &(a_0,a_1,a_2)=(1,\tfrac12,-1),\\
 \frac12|2\xi_1+\xi_2|^2,
       &(a_0,a_1,a_2)=(1,-1,\tfrac12),\\
 -\frac12|\xi_1-\xi_2|^2,
       &(a_0,a_1,a_2)=(\tfrac12,1,1).
 \end{cases}
\end{aligned}
\]

If $N_0\sim N_1\gg N_2$, by triangle inequality, we have 
\[
|(\tau_0+a_0|\xi_0|^2)
  -(\tau_1+a_1|\xi_1|^2)
  -(\tau_2+a_2|\xi_2|^2)|
 \gtrsim N_1^2.
\]
Choose $M \sim c N_1^2$, and decompose 
$f_j=Q_{<M}^{S_j}f_j+Q_{\ge M}^{S_j}f_j$.
We notice that by definition, 
if $|\tau_j+a_j|\xi_j|^2|\le M$, then $|(\tau_0+a_0|\xi_0|^2)
  -(\tau_1+a_1|\xi_1|^2)
  -(\tau_2+a_2|\xi_2|^2)| \lesssim M$.
This implies that
\[
 \int_{\mathbb R\times\mathbb T^6}
 (Q_{<M}^{S_1}f_1)(Q_{<M}^{S_2}f_2)
 \overline{Q_{<M}^{S_0}f_0}\,dx\,dt=0.
\]
when $M \sim c N_1^2$ for a suitable $c$.

So each of the seven remaining terms has at least one $Q^{S_j}_{\ge M}f_j$ for some $j$. By \Cref{mM}
\begin{equation}\label{high}
     \|Q_{\ge M}^{S_j}f_j\|_{L^2_{t,x}(\mathbb R\times\mathbb T^6)}
 \lesssim M^{-1/2}\|f_j\|_{Y^0}
 \lesssim N_1^{-1}\|f_j\|_{Y^0}.
\end{equation}

If $j$ is $0$ or $1$, apply Cauchy--Schwarz to
the $Q^{S_j}_{\ge M}f_j$ factor and the other two factors, and then
use \eqref{two} and \eqref{high}. These give, for example,
\[
 \left|\int_{\mathbb R\times\mathbb T^6}
 Q^{S_1}_{\le M}Q^{S_2}_{\le M}\overline{Q^{S_0}_{\ge M}}\,dx\,dt\right|
 \lesssim\frac{N_2^2}{N_1}
 \prod_{k=0}^2\|f_k\|_{Y^0}.
\]
If $j$ is $2$, by \eqref{simtwo}, for example, we have 
\begin{align*}
 \left|\int_{\mathbb R\times\mathbb T^6}
 Q^{S_1}_{\le M} Q^{S_2}_{\ge M}\overline{Q^{S_0}_{\le M}}\,dx\,dt\right|
 &\le\|Q^{S_2}_{\ge M}\|_{L^2_{t,x}}
       \|P_{\lesssim N_2}(Q^{S_1}_{\le M}\overline{Q^{S_0}_{\le M}})\|_{L^2_{t,x}}\lesssim\frac{N_2^2}{N_1}
       \prod_{k=0}^2\|f_k\|_{Y^0}.
\end{align*}
Since $N_0\sim N_1$,
$N_2^2/N_1\sim (N_1N_2/N_0)(N_2/N_1)$, as required.

If $N_1\sim N_2\gg N_0$, using $\xi_0=\xi_1+\xi_2$, we have
\[
 \xi_1+2\xi_2=\xi_0+\xi_2,\qquad
 2\xi_1+\xi_2=\xi_0+\xi_1,\qquad
 \xi_1-\xi_2=2\xi_1-\xi_0.
\]
Since $|\xi_0|\ll|\xi_1|\sim|\xi_2|$, the triangle
inequality and the three identities above give
\[
 |(\tau_0+a_0|\xi_0|^2)
  -(\tau_1+a_1|\xi_1|^2)
  -(\tau_2+a_2|\xi_2|^2)|
 \gtrsim N_1^2.
\]
Choose $M=cN_1^2$ with suitable $c>0$, similarly
\[
 \int_{\mathbb R\times\mathbb T^6}
 (Q_{<M}^{S_1}f_1)(Q_{<M}^{S_2}f_2)
 \overline{Q_{<M}^{S_0}f_0}\,dx\,dt=0.
\]

Also, each of the seven remaining terms has at least one $Q^{S_j}_{\ge M}f_j$ for some $j$. 
By \Cref{mM},
\begin{equation}\label{high2}
     \|Q_{\ge M}^{S_j}f_j\|_{L^2_{t,x}(\mathbb R\times\mathbb T^6)}
 \lesssim M^{-1/2}\|f_j\|_{Y^0}
 \lesssim N_1^{-1}\|f_j\|_{Y^0}.
\end{equation}
If $j$ is $1$ or $2$, applying Cauchy--Schwarz, \eqref{two} and \eqref{high2} give, for example,
\begin{align*}
 \left|\int_{\mathbb R\times\mathbb T^6}
 (Q_{\ge M}^{S_1}f_1)(Q_{<M}^{S_2}f_2)
 \overline{Q_{<M}^{S_0}f_0}\,dx\,dt\right| \le
 \|Q_{\ge M}^{S_1}f_1\|_{L^2_{t,x}}
 \|(Q_{<M}^{S_2}f_2)
   \overline{Q_{<M}^{S_0}f_0}\|_{L^2_{t,x}} \lesssim\frac{N_0^2}{N_1}
 \prod_{k=0}^2\|f_k\|_{Y^0}.
\end{align*}
If $j$ is $0$, similarly, we have, for example,
\[
\begin{aligned}
 \left|\int_{\mathbb R\times\mathbb T^6}
 (Q_{<M}^{S_1}f_1)(Q_{<M}^{S_2}f_2)
 \overline{Q_{\ge M}^{S_0}f_0}\,dx\,dt\right|
 &\le
 \|Q_{\ge M}^{S_0}f_0\|_{L^2_{t,x}}
 \left\|P_{\lesssim N_0}
 \bigl((Q_{<M}^{S_1}f_1)(Q_{<M}^{S_2}f_2)\bigr)
 \right\|_{L^2_{t,x}} \\
 &\lesssim\frac{N_0^2}{N_1}
 \prod_{k=0}^2\|f_k\|_{Y^0}.
\end{aligned}
\]
Finally, since $N_1\sim N_2$, we have
 $N_0^2/{N_1}
 \sim({N_1N_2}/{N_0})
       ({N_0}/{N_1})^3$.
\end{proof}
We define
\begin{equation}\label{eq:z-prime-norm}
 \|u\|_{Z'(I)}
 :=\|u\|_{Z(I)}^{1/2}\|u\|_{X^1(I)}^{1/2}.
\end{equation}

\begin{proposition}\label{lem:trilinear-l3-bound}
Under the hypotheses of
\Cref{prop:trilinear-frequency-bound}, one has
\begin{equation}\label{eq:trilinear-l3-bound}
\begin{aligned}
 &\quad \left|\int_{I\times\mathbb T^6}
 (P_{N_1}u_1)(P_{N_2}u_2)
 \overline{P_{N_0}u_0}\,dx\,dt\right|\\
 &\lesssim
 \left(N_0^{-1}\|P_{N_0}u_0\|_{Y^0(I)}\right)
 \prod_{j=1}^2
 \left(N_j^{2/3}\|P_{N_j}u_j\|_{L^3_{t,x}(I)}\right) \times 
 \begin{cases}
  1, & N_0\sim N_1\sim N_2,\\
  (N_1/N_2)^{2/3}, & N_0\sim N_1\gg N_2,\\
  (N_0/N_1)^{4/3}, & N_1\sim N_2\gg N_0.
 \end{cases}.
\end{aligned}
\end{equation}
\end{proposition}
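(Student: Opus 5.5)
The plan is to derive the estimate directly from H\"older's inequality in $L^3_{t,x}$ together with the $L^3$ Strichartz bound for the $u_0$ factor. The key observation is that in all three frequency regimes, the stated right-hand side of \eqref{eq:trilinear-l3-bound} collapses to the same quantity:
\[
 N_0^{1/3}\,\|P_{N_0}u_0\|_{Y^0(I)}\,\|P_{N_1}u_1\|_{L^3_{t,x}(I)}\,\|P_{N_2}u_2\|_{L^3_{t,x}(I)}.
\]
So no resonance or modulation analysis should be needed here, in contrast to \Cref{prop:trilinear-frequency-bound}.

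First, I will apply H\"older's inequality on $I\times\mathbb T^6$ with exponents $(3,3,3)$:
\[
 \left|\int_{I\times\mathbb T^6}(P_{N_1}u_1)(P_{N_2}u_2)\overline{P_{N_0}u_0}\,dx\,dt\right|
 \le \|P_{N_1}u_1\|_{L^3_{t,x}(I)}\|P_{N_2}u_2\|_{L^3_{t,x}(I)}\|P_{N_0}u_0\|_{L^3_{t,x}(I)}.
\]
Next, since $3>\tfrac83$, \Cref{cor-strichartz} with $p=3$ gives $\|P_{N_0}u_0\|_{L^3_{t,x}(I)}\lesssim N_0^{1/3}\|P_{N_0}u_0\|_{Y^0(I)}$. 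This uses that $|I|\le1$, so after a time translation $I\subset[-1,1]$. It also uses that $u_0$ is adapted to $e^{tS_0}$ with $S_0=ia_0\Delta$ and $a_0\in\{1,\tfrac12\}$. For $a_0=\tfrac12$, I would note that the Strichartz estimate of \Cref{prop-pre-Stri} for $e^{\frac i2 t\Delta}$ on $[-1,1]$ follows from the one for $e^{it\Delta}$ on $[-\tfrac12,\tfrac12]$ by rescaling time, at the cost of a harmless constant. The transfer principle then passes it to $U^3$, and the embedding $Y^0\hookrightarrow V^2\hookrightarrow U^3$ passes it to $Y^0$.

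Finally, I check the bookkeeping in each case:
\begin{itemize}
\item If $N_0\sim N_1\sim N_2$, then $N_0^{-1}N_1^{2/3}N_2^{2/3}\sim N_0^{1/3}$.
\item If $N_0\sim N_1\gg N_2$, then $N_0^{-1}N_1^{2/3}N_2^{2/3}(N_1/N_2)^{2/3}=N_0^{-1}N_1^{4/3}\sim N_0^{1/3}$.
\item If $N_1\sim N_2\gg N_0$, then $N_0^{-1}N_1^{4/3}(N_0/N_1)^{4/3}=N_0^{1/3}$.
\end{itemize}
In every case the H\"older bound matches the right-hand side of \eqref{eq:trilinear-l3-bound}.

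The only point needing care is the Strichartz step for the $u_0$ factor. It must be justified in the restricted space $Y^0(I)$ and for the modified propagator with $a_0=\tfrac12$. I expect this to be routine via the definition of the restriction norm: take any extension in $Y^0$ and use the time-rescaling remark above.
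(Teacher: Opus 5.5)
Your proposal is correct and follows essentially the paper's proof: H\"older's inequality in $L^3_{t,x}$, then \Cref{cor-strichartz} with $p=3$ on the $u_0$ factor to get $N_0^{1/3}\|P_{N_0}u_0\|_{Y^0(I)}$, then checking that in each frequency regime the case factor reproduces $N_0^{4/3}/(N_1N_2)^{2/3}$. Your time-rescaling remark for the $a_0=\tfrac12$ propagator is a detail the paper leaves implicit ("applied to the propagator associated with $u_0$"); it does not change the argument.
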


\begin{proof}
Since $|I|\le1$, \Cref{cor-strichartz} with $p=3$, applied to the
propagator associated with $u_0$, gives
\[
 \|P_{N_0}u_0\|_{L^3_{t,x}(I)}
 \lesssim N_0^{1/3}\|P_{N_0}u_0\|_{Y^0(I)}.
\]
H\"older's inequality therefore yields
\begin{align*}
 &\quad \left|\int_{I\times\mathbb T^6}
 (P_{N_1}u_1)(P_{N_2}u_2)
 \overline{P_{N_0}u_0}\,dx\,dt\right|\le\prod_{j=0}^2
 \|P_{N_j}u_j\|_{L^3_{t,x}(I)}\\
 &\lesssim
 N_0^{1/3}\|P_{N_0}u_0\|_{Y^0(I)}
 \prod_{j=1}^2\|P_{N_j}u_j\|_{L^3_{t,x}(I)}\\
 &=
 \frac{N_0^{4/3}}{(N_1N_2)^{2/3}}
 \left(N_0^{-1}\|P_{N_0}u_0\|_{Y^0(I)}\right)
 \prod_{j=1}^2
 \left(N_j^{2/3}\|P_{N_j}u_j\|_{L^3_{t,x}(I)}\right).
\end{align*}
Combing with the frequency relations
into $N_0^{4/3}/(N_1N_2)^{2/3}$, we obtain \eqref{eq:trilinear-l3-bound}.
\end{proof}

\begin{remark}\label{rem:trilinear-geometric-mean}
Taking the geometric mean of the bound in
\Cref{prop:trilinear-frequency-bound} and
\Cref{lem:trilinear-l3-bound}, we obtain
\begin{equation}\label{eq:trilinear-geometric-mean}
\begin{aligned}
 &\quad\left|\int_{I\times\mathbb T^6}
 (P_{N_1}u_1)(P_{N_2}u_2)
 \overline{P_{N_0}u_0}\,dx\,dt\right|\\
 &\lesssim
 \left(N_0^{-1}\|P_{N_0}u_0\|_{Y^0(I)}\right)
 \prod_{j=1}^2
 \left[
 N_j\|P_{N_j}u_j\|_{Y^0(I)}
 N_j^{2/3}\|P_{N_j}u_j\|_{L^3_{t,x}(I)}
 \right]^{1/2}\\
 &\quad \times
 \begin{cases}
  1, & N_0\sim N_1\sim N_2,\\
  (N_2/N_1)^{1/6}, & N_0\sim N_1\gg N_2,\\
  (N_0/N_1)^{13/6}, & N_1\sim N_2\gg N_0.
 \end{cases}
\end{aligned}
\end{equation}
\end{remark}
Now, we introduce our nonlinear estimates to establish local theory.
\begin{lemma}\label{lem:nonlinear-estimates}
Let $I=[a,b]$ with $|I|\le1$, and let $t_0\in I$.
For $\bfu=(u_1,u_2)\in X^1(I)$, one has
\begin{equation}\label{eq:quadratic-nonlinear-n}
 \left\|
\int_{t_0}^t e^{i(t-r)\Delta}
             F(\mathbf{u})\,dr
 \right\|_{X^1(I)}
 \lesssim \|\bfu\|_{Z'(I)}^2.
\end{equation}
Moreover, for $\bfu\in X^1(I)$,
\begin{equation}\label{eq:quadratic-system-xs}
 \left\|
\int_{t_0}^t e^{i(t-r)\Delta}
             F(\mathbf{u})\,dr
 \right\|_{X^1(I)}
 \lesssim \|\bfu\|_{X^1(I)}\|\bfu\|_{Z'(I)}.
\end{equation}
\end{lemma}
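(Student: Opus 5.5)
We will follow the Ionescu--Pausader scheme for the $\mathbb T^3$ quintic problem, adapted to quadratic nonlinearities via \Cref{rem:trilinear-geometric-mean}.

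\textbf{Step 1: duality.} By \Cref{prop:duality} (with $s=1$) it suffices to bound
\[
\Bigl|\int_I\int_{\mathbb T^6} F(\mathbf u)\cdot\overline{\mathbf v}\,dx\,dt\Bigr|
\]
uniformly over $\|\mathbf v\|_{Y^{-1}(I)}=1$. Splitting into components reduces this to two scalar trilinear forms,
\[
\int u_1\overline{u_2}\,\overline{v_1}\qquad\text{and}\qquad \int u_1^2\,\overline{v_2}.
\]
Each has factors adapted to $e^{tS_j}$ with $(a_0,a_1,a_2)$ in the admissible list of \Cref{prop:trilinear-frequency-bound}. A complex conjugate of a function adapted to $e^{ia\Delta t}$ is adapted to $e^{-ia\Delta t}$, which is why $-1$ appears in the list; conjugation does not change the $Y^0$, $X^1$ or $L^3$ norms. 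We then Littlewood--Paley decompose all three factors and write the form as
\[
\sum_{N_0,N_1,N_2}\int (P_{N_1}u_1)(P_{N_2}u_2)\overline{P_{N_0}v},
\]
using the symmetry of the two input slots to assume $N_1\ge N_2$. The low-frequency pieces $P_{\le1}$ are absorbed as $N=1$ blocks. Frequency support forces one of three configurations: $N_0\sim N_1\sim N_2$, $N_0\sim N_1\gg N_2$, or $N_1\sim N_2\gtrsim N_0$.

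\textbf{Step 2: apply the geometric-mean bound \eqref{eq:trilinear-geometric-mean} to each block.} Set
\[
a_N=N\|P_Nu\|_{Y^0},\qquad b_N=N^{2/3}\|P_Nu\|_{L^3},\qquad c_N=N^{-1}\|P_Nv\|_{Y^0}.
\]
Then
\[
\sum_N a_N^2\lesssim\|u\|_{X^1}^2,\qquad \sum_N c_N^2\lesssim\|v\|_{Y^{-1}}^2,\qquad \sup_N b_N\le\|u\|_Z,
\]
and in fact $\bigl(\sum_N b_N^3\bigr)^{1/3}\le\|u\|_Z$.

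\emph{High--high--low case.} The factor $(N_0/N_1)^{13/6}$ makes the sum over $N_0\le N_1$ geometric. Cauchy--Schwarz in $N_1\sim N_2$, with $(a_{N_1}b_{N_1})^{1/2}(a_{N_2}b_{N_2})^{1/2}\le \sup_N b_N\cdot a_{N_1}a_{N_2}$ and $c_{N_0}\le\|v\|_{Y^{-1}}$, gives the bound $\|u\|_{X^1}\|u\|_Z$.

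\emph{High--low case.} Here $N_0\sim N_1$, and the gain $(N_2/N_1)^{1/6}$ gives a Schur-type summation. The block is bounded by
\[
c_{N_0}\,(a_{N_1}b_{N_1})^{1/2}(a_{N_2}b_{N_2})^{1/2}(N_2/N_1)^{1/6}
\le c_{N_0}\,a_{N_1}\,\|u\|_Z^{1/2}\,(a_{N_2}b_{N_2})^{1/2}(N_2/N_1)^{1/6}.
\]
Summing in $N_2\le N_1$ with Cauchy--Schwarz, $a_N\lesssim\|u\|_{X^1}$ and $b_N\le\|u\|_Z$, this is bounded by $\|u\|_Z\|u\|_{X^1}$ times $c_{N_0}a_{N_1}$. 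Summing the diagonal $N_0\sim N_1$ by Cauchy--Schwarz gives $\|v\|_{Y^{-1}}\|u\|_{X^1}\|u\|_Z$.

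\emph{Equal-frequency case.} This is immediate in the same way.

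\textbf{Step 3: the two stated bounds.} Step 2 gives $\|u\|_{X^1}\|u\|_Z$. If instead we keep the symmetric square roots $(a_Nb_N)^{1/2}$ on both inputs and apply Cauchy--Schwarz, we get
\[
\Bigl(\sum_N a_Nb_N\Bigr)\lesssim \|u\|_{X^1}\|u\|_Z=\|u\|_{Z'}^2,
\]
which is \eqref{eq:quadratic-nonlinear-n}. For \eqref{eq:quadratic-system-xs} it is enough to note that $\|u\|_{Z'}^2\le\|u\|_{X^1}\|u\|_{Z'}$, since $\|u\|_Z\lesssim\|u\|_{X^1}$ by \Cref{Yembedding}. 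Alternatively, if one wants a multilinear (polarized) version for the stability theory, we put the $Y^0$ bound of \Cref{prop:trilinear-frequency-bound} on one factor and the geometric mean on the other.

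\textbf{Restriction to $I$ and main obstacle.} We pass from $X^1(I)$ and $Y^{-1}(I)$ to extensions via the restriction-norm definitions; since $|I|\le1$, the hypotheses of the trilinear bounds hold. The main obstacle is the dyadic bookkeeping in the high--low case. The gain $(N_2/N_1)^{1/6}$ must suffice to sum over $N_2$ while the $\ell^2$ sums over $N_0\sim N_1$ are paired correctly. If this fails, we would instead use the pure $Y^0$ bound, whose gain $N_2/N_1$ is stronger, on the low-frequency factor.
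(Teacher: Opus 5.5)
\textbf{There is a genuine gap: the dyadic summation fails.} Your framework is the paper's: duality via \Cref{prop:duality}, the block bound \eqref{eq:trilinear-geometric-mean}, the three frequency configurations, and deducing \eqref{eq:quadratic-system-xs} from \eqref{eq:quadratic-nonlinear-n} through $\|u\|_{Z'}\lesssim\|u\|_{X^1}$. However, once \eqref{eq:trilinear-geometric-mean} is available, the dyadic summation is the entire content of the proof, and it does not work as you wrote it.

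First, your displayed inequalities are not homogeneous. The bound $(a_{N_1}b_{N_1})^{1/2}(a_{N_2}b_{N_2})^{1/2}\le\sup_N b_N\,a_{N_1}a_{N_2}$ has degree one in $a$ on the left and degree two on the right. In the high--low display, $a_{N_1}^{1/2}$ silently becomes $a_{N_1}$.

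Second, and more seriously, restoring homogeneity does not save the argument. Bounding $c_{N_0}$ by $\|v\|_{Y^{-1}}$ (high--high--low), or the low-frequency factor by its supremum (high--low), leaves you needing $\sum_N a_Nb_N\lesssim\|u\|_{X^1}\|u\|_{Z}$. That is exactly what your Step~3 asserts, and it is false. From $a\in\ell^2$ and $b\in\ell^3$, H\"older gives only $(a_Nb_N)\in\ell^{6/5}$. For example, $a_N=b_N=1$ for $1\le N\le 2^K$ gives $\sum_Na_Nb_N=K+1$, against $\|a\|_{\ell^2}\|b\|_{\ell^3}=(K+1)^{5/6}$. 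The Strichartz bound $b_N\lesssim a_N$ does not rule this out. Your ``equal-frequency case, immediate in the same way'' fails for the same reason.

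\textbf{The missing idea.} All three dyadic factors must keep their summability. Each input contributes $(a_Nb_N)^{1/2}\in\ell^{12/5}$ and nothing better. Hölder in the dyadic index therefore closes with the dual coefficients in $\ell^6$ (available since $c\in\ell^2$), because $\tfrac16+\tfrac5{12}+\tfrac5{12}=1$. Replacing any one factor by its supremum leaves exponents summing to $\tfrac56$ or $\tfrac{11}{12}$, and nothing can close.

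\begin{itemize}
\item In the diagonal case, $\sum_N c_Na_Nb_N\le\|c\|_{\ell^6}\|a\|_{\ell^2}\|b\|_{\ell^3}$.
\item In the off-diagonal cases, use the geometric gains only to convolve. Set $\tilde c_{N_1}=\sum_{N_0\le N_1}(N_0/N_1)^{13/6}c_{N_0}$ (high--high--low), respectively $\tilde d_{N_1}=\sum_{N_2\le N_1}(N_2/N_1)^{1/6}(a_{N_2}b_{N_2})^{1/2}$ (high--low). By Young's inequality these stay in the same $\ell^p$ spaces as the unconvolved sequences.
\item Then H\"older $\ell^2\times\ell^4\times\ell^4$, together with $\|(ab)^{1/2}\|_{\ell^4}=\|ab\|_{\ell^2}^{1/2}\le\|ab\|_{\ell^{6/5}}^{1/2}\le\|a\|_{\ell^2}^{1/2}\|b\|_{\ell^3}^{1/2}$, finishes each case.
\end{itemize}
This is, in substance, how the paper closes each case.

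Your fallback of putting the pure $Y^0$ bound on the low-frequency factor discards that input's $Z$-factor. At best it yields \eqref{eq:quadratic-system-xs}, not \eqref{eq:quadratic-nonlinear-n}.

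A minor point: the first form must be $\int u_2\overline{u_1}\,\overline{v_1}$, as in \eqref{eq:system}. With $u_1\overline{u_2}$ the phases are $(1,1,-\tfrac12)$, which is not in the admissible list of \Cref{prop:trilinear-frequency-bound}.
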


\begin{proof}
Applying \Cref{prop:duality}, we obtain
\begin{align*}
 \left\|
\int_{t_0}^t e^{i(t-r)\Delta}
             F(\mathbf{u})\,dr
 \right\|_{X^1(I)}
 \lesssim
 \sup_{\|\textbf{v}\|_{Y^{-1}(I)}\le1}
 \left|\int_{I\times\mathbb T^6}
 \bigl(u_2\overline{u_1}\,\overline{v_1}
       +u_1^2\overline{v_2}\bigr)\,dx\,dt\right|.
\end{align*}
Here $\mathbf{v}=(v_1,v_2)$, where $v_1$ is adapted to $e^{it\Delta}$
and $v_2$ to $e^{\frac i2t\Delta}$.

First, we consider
\[\sum_{N_0,N_1,N_2}
 \left|\int_{I\times\mathbb T^6}
 (P_{N_1}u_2)\overline{P_{N_2}u_1}\, 
  \overline{P_{N_0}v_1}\,dx\,dt\right|\]

We assume $N_1 \ge N_2$, if $N_1 \sim N_2 \sim N_0$, applying \eqref{eq:trilinear-geometric-mean} and H\"older's
inequality gives
\begin{align*}
 &\sum_{N_0\sim N_1\sim N_2}
 \left|\int_{I\times\mathbb T^6}
 (P_{N_1}u_2)\overline{P_{N_2}u_1}\,
 \overline{P_{N_0}v_1}\,dx\,dt\right|\\
 &\lesssim
 \left\{\sum_N
 \left(N^{-1}\|P_Nv_1\|_{Y^0(I)}\right)^6
 \right\}^{1/6}\times
 \prod_{j=1}^2
 \left\{\sum_N
 \left[
 N\|P_Nu_j\|_{Y^0(I)}
 N^{2/3}\|P_Nu_j\|_{L^3_{t,x}(I)}
 \right]^{6/5}\right\}^{5/12}\\
 &\lesssim
 \left(\sum_NN^{-2}\|P_Nv_1\|_{Y^0(I)}^2\right)^{1/2}\times
 \prod_{j=1}^2
 \left\{\sum_N
 \left[
 N\|P_Nu_j\|_{Y^0(I)}
 N^{2/3}\|P_Nu_j\|_{L^3_{t,x}(I)}
 \right]^{6/5}\right\}^{5/12}\\
 &\lesssim
 \|v_1\|_{Y^{-1}(I)}
 \|u_1\|_{Z'(I)}\|u_2\|_{Z'(I)}.
\end{align*}
If $N_0 \sim N_1 \gg N_2$, by \eqref{eq:trilinear-geometric-mean} and 
Cauchy--Schwarz, we obtain
\begin{align*}
 &\sum_{N_0\sim N_1\gg N_2}
 \left|\int_{I\times\mathbb T^6}
 (P_{N_1}u_2)\overline{P_{N_2}u_1}\,
 \overline{P_{N_0}v_1}\,dx\,dt\right|\\
 &\lesssim
 \left(\sum_NN^{-2}\|P_Nv_1\|_{Y^0(I)}^2\right)^{1/2}
 \prod_{j=1}^2
 \left\{
 \sum_N
 \left[
 N\|P_Nu_j\|_{Y^0(I)}
 N^{2/3}\|P_Nu_j\|_{L^3_{t,x}(I)}
 \right]^2
 \right\}^{1/4}\\
 &\le
 \left(\sum_NN^{-2}\|P_Nv_1\|_{Y^0(I)}^2\right)^{1/2}
 \prod_{j=1}^2
 \left\{
 \sum_N
 \left[
 N\|P_Nu_j\|_{Y^0(I)}
 N^{2/3}\|P_Nu_j\|_{L^3_{t,x}(I)}
 \right]^{6/5}
 \right\}^{5/12}\\
 &\lesssim
 \|v_1\|_{Y^{-1}(I)}
 \|u_1\|_{Z'(I)}\|u_2\|_{Z'(I)}.
\end{align*}
If $N_1\sim N_2\gg N_0$, applying \eqref{eq:trilinear-geometric-mean} and
Cauchy--Schwarz twice, we obtain
\begin{align*}
&\sum_{N_1\sim N_2\gg N_0}
\left|
\int_{I\times\mathbb T^6}
(P_{N_1}u_2)\overline{P_{N_2}u_1}\,
\overline{P_{N_0}v_1}\,dx\,dt
\right|
\\
&\lesssim
\left(
\sum_N N^{-2}\|P_Nv_1\|_{Y^0(I)}^2
\right)^{1/2}
\prod_{j=1}^{2}
\left\{
\sum_N
\left[
N\|P_Nu_j\|_{Y^0(I)}
N^{2/3}\|P_Nu_j\|_{L^3_{t,x}(I)}
\right]^2
\right\}^{1/4}
\\
&\le
\left(
\sum_N N^{-2}\|P_Nv_1\|_{Y^0(I)}^2
\right)^{1/2}
\prod_{j=1}^{2}
\left\{
\sum_N
\left[
N\|P_Nu_j\|_{Y^0(I)}
N^{2/3}\|P_Nu_j\|_{L^3_{t,x}(I)}
\right]^{6/5}
\right\}^{5/12}
\\
&\lesssim
\|v_1\|_{Y^{-1}(I)}
\|u_1\|_{Z'(I)}\|u_2\|_{Z'(I)}.
\end{align*}
By a similar argument, we can prove 
\[\sum_{N_0,N_1,N_2}
 \left|\int_{I\times\mathbb T^6}
 (P_{N_1}u_1){P_{N_2}u_1}\, 
  \overline{P_{N_0}v_2}\,dx\,dt\right| \lesssim \|v_2\|_{Y^{-1}(I)}
\|u_1\|^2_{Z'(I)}\]
Above all, we obtain
\begin{align*}
 \left|\int_{I\times\mathbb T^6}
 \bigl(u_2\overline{u_1}\,\overline{v_1}
       +u_1^2\overline{v_2}\bigr)\,dx\,dt\right| &\lesssim
 \|v_1\|_{Y^{-1}(I)}
 \|u_1\|_{Z'(I)}\|u_2\|_{Z'(I)}
 +\left\|v_2\right\|_{Y^{-1}(I)}\|u_1\|^2_{Z'(I)}
 \\
 &\lesssim
 \|\textbf{v}\|_{Y^{-1}(I)}\|\bfu\|^2_{Z'(I)}\lesssim  \|\textbf{v}\|_{Y^{-1}(I)}\|\bfu\|_{Z'(I)}\left\|\bfu\right\|_{X^1(I)}.
\end{align*}
Taking the vector dual supremum proves \eqref{eq:quadratic-nonlinear-n} and 
\eqref{eq:quadratic-system-xs}.
\end{proof}
\subsection{Local theory}

We prove local theory by the above nonlinear estimates.

We recall the vector propagator and the nonlinearity from the introduction:
\[
 e^{it\Delta}\textbf{u}_0
 =\begin{pmatrix}e^{it\Delta}u_{1,0}\\ e^{\frac i2t\Delta}u_{2,0}\end{pmatrix},
 \qquad
 F(\textbf{u})
 =\begin{pmatrix}u_2\overline{u_1}\\ u_1^2\end{pmatrix}.
\]
Thus the Duhamel formula for the system is
\[
 \textbf{u}(t)=e^{i(t-t_0)\Delta}\textbf{u}_0
 +i\int_{t_0}^t e^{i(t-r)\Delta}F(\textbf{u}(r))\,dr.
\]
\begin{theorem}
\label{prop:local-theory-h1}
(1) Given $E>0$, there exists $\delta_0=\delta_0(E)>0$ such that,
if $t_0\in I$, $|I|\le1$, and $\textbf{u}_0\in H^1(\mathbb T^6;\mathbb C^2)$ satisfies
\[
 \|\textbf{u}_0\|_{H^1}\le E,
 \qquad
 \|e^{i(t-t_0)\Delta}\textbf{u}_0\|_{Z'(I)}
 \le\delta\le\delta_0,
\]
then the system \eqref{eq:system1} has a unique strong solution
$\textbf{u}\in C(I;H^1)\cap X^1(I)$ with
$\textbf{u}(t_0)=\textbf{u}_0$, and
\begin{equation}
\label{eq:local-h1-correction}
 \|\textbf{u}-e^{i(t-t_0)\Delta}\textbf{u}_0\|_{X^1(I)}
 \lesssim\delta^2.
\end{equation}

(2) Let $\textbf{u}$ be a strong solution on a bounded interval $I$.
If $\|\textbf{u}\|_{Z(I)}<\infty$, then $\textbf{u}$ extends to a
neighborhood of $\overline I$. Moreover, when $|I|\le1$ and $t_0\in I$,
\begin{equation}
\label{eq:local-h1-continuation-bound}
 \|\textbf{u}\|_{X^1(I)}
 \le C\bigl(\|\textbf{u}(t_0)\|_{H^1},\|\textbf{u}\|_{Z(I)}\bigr).
\end{equation}
\end{theorem}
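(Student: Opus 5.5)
Part (1) will follow from a contraction argument for the Duhamel map
\[
 \Phi(\mathbf{v})(t)=e^{i(t-t_0)\Delta}\mathbf{u}_0+i\int_{t_0}^t e^{i(t-r)\Delta}F(\mathbf{v}(r))\,dr
\]
on the set
\[
 B=\bigl\{\mathbf{v}\in X^1(I):\ \|\mathbf{v}\|_{X^1(I)}\le 2E,\ \|\mathbf{v}\|_{Z'(I)}\le 2\delta\bigr\},
\]
with the metric $d(\mathbf{v},\mathbf{w})=\|\mathbf{v}-\mathbf{w}\|_{X^1(I)}$.
\begin{itemize}
\item \emph{Linear part.} \Cref{prop-pre-Xs-Hs} gives $\|e^{i(t-t_0)\Delta}\mathbf{u}_0\|_{X^1(I)}\le E$.
\item \emph{Nonlinear part.} \eqref{eq:quadratic-nonlinear-n} bounds the Duhamel term in $X^1$ by $C\|\mathbf{v}\|_{Z'}^2\le 4C\delta^2$.
\item \emph{$Z'$ control.} \Cref{Yembedding} gives $\|\cdot\|_{Z}\lesssim\|\cdot\|_{X^1}$, hence also $\|\cdot\|_{Z'}\lesssim\|\cdot\|_{X^1}$. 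Since $Z'$ is not a norm, I will work with $Z$ and $X^1$ separately: $\|\Phi(\mathbf{v})\|_{Z}\le\|e^{i(t-t_0)\Delta}\mathbf{u}_0\|_Z+C\delta^2$. Taking geometric means then gives $\|\Phi(\mathbf{v})\|_{Z'}\le 2\delta$ once $\delta_0$ is small relative to $E$. To make this clean, I will replace the $Z'$ condition in $B$ by a $Z$-bound of order $\delta^2/E$ together with the $X^1$ bound, adjusting constants.
\item \emph{Difference estimate.} Since $F(\mathbf{v})-F(\mathbf{w})$ is a sum of bilinear terms with one factor $\mathbf{v}-\mathbf{w}$, the polarized trilinear estimates behind \Cref{lem:nonlinear-estimates} give
\[
 \|\Phi(\mathbf{v})-\Phi(\mathbf{w})\|_{X^1}\lesssim\bigl(\|\mathbf{v}\|_{Z'}+\|\mathbf{w}\|_{Z'}\bigr)^{1/2}\bigl(\|\mathbf{v}\|_{X^1}+\|\mathbf{w}\|_{X^1}\bigr)^{1/2}\|\mathbf{v}-\mathbf{w}\|_{X^1}.
\]
Here one factor is placed in $X^1$ via the geometric-mean bound \eqref{eq:trilinear-geometric-mean}, with $\|\cdot\|_{Z'}\le\|\cdot\|_{X^1}$ on the difference. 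This yields a contraction for $\delta_0(E)$ small.
\end{itemize}
Continuity into $H^1$ follows from $X^1\hookrightarrow C(I;H^1)$, and \eqref{eq:local-h1-correction} is the nonlinear estimate applied to the fixed point. Uniqueness in the whole of $C(I;H^1)\cap X^1(I)$ follows by shrinking the interval so that the $Z'$ norm of any solution is small, which is possible because $Z$ norms on short intervals tend to zero, and then using a continuity argument.

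For part (2), I will first prove \eqref{eq:local-h1-continuation-bound}. The plan is to split $I$ into $K=K(\|\mathbf{u}\|_{Z(I)},\varepsilon)$ consecutive subintervals $I_k$ with $\|\mathbf{u}\|_{Z(I_k)}\le\varepsilon$; this is possible since the $Z$ norm is an $\ell^3$ sum of $L^3_{t,x}$ norms, so it is essentially divisible. On each $I_k$, \eqref{eq:quadratic-system-xs} gives
\[
 \|\mathbf{u}\|_{X^1(I_k)}\le\|\mathbf{u}(t_k)\|_{H^1}+C\|\mathbf{u}\|_{X^1(I_k)}^{3/2}\varepsilon^{1/2}.
\]
A continuity argument in the endpoint (the map $t\mapsto\|\mathbf{u}\|_{X^1([t_k,t])}$ is continuous) then yields $\|\mathbf{u}\|_{X^1(I_k)}\le2\|\mathbf{u}(t_k)\|_{H^1}$, provided $\varepsilon$ is small depending on $\|\mathbf{u}(t_k)\|_{H^1}$.

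\textbf{Main obstacle.} $\varepsilon$ depends on the current $H^1$ size, which could double at each step. Since energy conservation is not yet available in this generality, I expect this to be the main difficulty. I will handle it by choosing the partition adaptively: at each step $\varepsilon_k$ depends on the bound $2^k\|\mathbf{u}(t_0)\|_{H^1}$. The count of steps remains finite because $\sum_k\|\mathbf{u}\|_{Z(I_k)}^3\le\|\mathbf{u}\|_{Z(I)}^3$ (up to the sup over unit intervals, harmless as $|I|\le1$). This gives a bound depending only on $\|\mathbf{u}(t_0)\|_{H^1}$ and $\|\mathbf{u}\|_{Z(I)}$; if this adaptive count misbehaves, I would instead invoke energy/mass conservation to keep $\|\mathbf{u}(t)\|_{H^1}$ uniformly bounded.

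Finally, for extension beyond $\overline I$: finiteness of $\|\mathbf{u}\|_{X^1(I)}$ implies that $e^{i(t-b)\Delta}\mathbf{u}(b)$ has small $Z'$ norm on $[b,b+\eta]$ for small $\eta$, by dominated convergence in the frequency sum. Part (1) then continues the solution past $b$, and symmetrically past the left endpoint of $I$.
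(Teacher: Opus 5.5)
\textbf{Part (1): the ball $B$ is not mapped into itself.} For $\mathbf v\in B$ you have $\Phi(\mathbf v)=L+D$ with $L=e^{i(t-t_0)\Delta}\mathbf u_0$ and $\|D\|_{X^1(I)}\lesssim\delta^2$. The only available control of $D$ in $Z$ is $\|D\|_Z\lesssim\|D\|_{X^1}\lesssim\delta^2$, so the best bound is
\[
\|\Phi(\mathbf v)\|_{Z'}^2\le\bigl(\|L\|_{Z}+C\delta^2\bigr)\bigl(\|L\|_{X^1}+C\delta^2\bigr)\le\delta^2+C\delta^2\bigl(\|L\|_{Z}+\|L\|_{X^1}\bigr)+C^2\delta^4 .
\]
The cross term $C\delta^2\|L\|_{X^1}$ can be of size $CE\delta^2$. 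That is the same order in $\delta$ as your target $4\delta^2$, so taking $\delta_0$ small relative to $E$ does not help. This estimate cannot give $\|\Phi(\mathbf v)\|_{Z'}\le2\delta$ once $E$ exceeds an absolute constant. Your $Z$-ball variant fails the same way: $\|D\|_Z\lesssim\|\mathbf v\|_Z\|\mathbf v\|_{X^1}\le 2Ea$, which exceeds $a$ for large $E$.

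The missing idea is to never ask for a $Z'$ bound on the whole iterate. Write $\mathbf u=L+\mathbf w$ and contract in $\mathbf w$ on the small ball $\|\mathbf w\|_{X^1(I)}\le 2C\delta^2$. The trilinear bounds behind \Cref{lem:nonlinear-estimates} are bilinear in the two $u$-factors, so
\[
\|\Phi(\mathbf w)\|_{X^1(I)}\le C\bigl(\|L\|_{Z'}^2+\|L\|_{Z'}\|\mathbf w\|_{Z'}+\|\mathbf w\|_{Z'}^2\bigr)\le C\bigl(\delta^2+\delta\|\mathbf w\|_{X^1}+\|\mathbf w\|_{X^1}^2\bigr).
\]
Here $\|\cdot\|_{Z'}\lesssim\|\cdot\|_{X^1}$ is used only on the small piece $\mathbf w$. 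This is the paper's argument; it closes with an absolute $\delta_0$ and gives \eqref{eq:local-h1-correction} directly.

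\textbf{Part (2): the bound on $\|\mathbf u\|_{X^1(I)}$.} The ``main obstacle'' comes from your use of the weaker estimate \eqref{eq:quadratic-system-xs}, and your workaround does not remove it. The superlinear term $\|\mathbf u\|_{X^1}^{3/2}\varepsilon^{1/2}$ forces $\varepsilon_k\sim(2^k\|\mathbf u(t_0)\|_{H^1})^{-1}$, and then $\sum_k\varepsilon_k^3<\infty$. So $\sum_k\|\mathbf u\|_{Z(I_k)}^3\le\|\mathbf u\|_{Z(I)}^3$ gives no bound on the number of steps once $\|\mathbf u\|_{Z(I)}\gtrsim\|\mathbf u(t_0)\|_{H^1}^{-1}$, and the partition points can accumulate inside $I$. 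The fallback also fails. No threshold is assumed in part (2) and the cubic term in $E(\mathbf u)$ has no sign, so energy conservation gives no $H^1$ control, and mass controls only $L^2$.

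The fix is to use \eqref{eq:quadratic-nonlinear-n} instead. Since $\|\mathbf u\|_{Z'(I_k)}^2=\|\mathbf u\|_{Z(I_k)}\|\mathbf u\|_{X^1(I_k)}\le\varepsilon\|\mathbf u\|_{X^1(I_k)}$ is linear in the $X^1$ norm (finite, because strong solutions lie in $X^1(I)$), an absolute $\varepsilon$ suffices to absorb. This gives $\|\mathbf u\|_{X^1(I_k)}\le 2C_0\|\mathbf u(t_k)\|_{H^1}$ with at most $1+(\|\mathbf u\|_{Z(I)}/\varepsilon)^3$ intervals, which is what the paper does. With these two repairs your continuation step is fine. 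Restarting from the endpoint limit $\mathbf u(b)$, which exists since $\mathbf u\in X^1(I)$, is a harmless variant of the paper's restart from an interior time $T_1$ near the endpoint.
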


\begin{proof}
(1) Set $\textbf{v}(t)=e^{i(t-t_0)\Delta}\textbf{u}_0$ and write
$\textbf{u}=\textbf{v}+\textbf{w}$. We solve
\[
 \textbf{w}=\Phi(\textbf{w}),
 \qquad
 \Phi(\textbf{w})(t)
 =i\int_{t_0}^t e^{i(t-r)\Delta}
                 \textbf{F}(\textbf{v}(r)+\textbf{w}(r))\,dr.
\]
By \Cref{lem:nonlinear-estimates} and $\|\textbf{v}\|_{Z'(I)}\le\delta$, gives
\begin{align*}
 \|\Phi(\textbf{w})\|_{X^1(I)}
 &\le C\bigl(\delta^2
       +\delta\|\textbf{w}\|_{X^1(I)}
       +\|\textbf{w}\|_{X^1(I)}^2\bigr),\\
 \|\Phi(\textbf{w})-\Phi(\textbf{z})\|_{X^1(I)}
 &\le C\bigl(\delta+\|\textbf{w}\|_{X^1(I)}
                    +\|\textbf{z}\|_{X^1(I)}\bigr)
             \|\textbf{w}-\textbf{z}\|_{X^1(I)}.
\end{align*}
We consider the closed ball
\[
 S= \bigl\{\textbf{w}\in C(I;H^1)\cap X^1(I):
 \textbf{w}(t_0)=0,\quad
 \|\textbf{w}\|_{X^1(I)}\le2C\delta^2\bigr\}.
\]
So choosing 
$\delta_0>0$ sufficiently small that $\Phi$ is a concentraction on ball $S$. The contraction
mapping theorem gives a solution and
\eqref{eq:local-h1-correction}. We mention that uniqueness is easy.

(2) It suffices to consider $I=(0,T)$.
Since $\mathbf u\in X^1(I)$, fix $E>0$ such that
$\sup_{t\in I}\|\mathbf u(t)\|_{H^1}\le E$.
Choose $\varepsilon>0$ sufficiently small and
$T-\frac12<T_1<T$ such that $ \|\mathbf u\|_{Z'(T_1,T)}\le\varepsilon$.
Let
\[
 h(s)=
 \bigl\|e^{i(t-T_1)\Delta}\mathbf u(T_1)
 \bigr\|_{Z'(T_1,T_1+s)}.
\]
For $0<s<T-T_1$, \Cref{lem:nonlinear-estimates} gives
\[
 \bigl\|\mathbf u-e^{i(t-T_1)\Delta}\mathbf u(T_1)
 \bigr\|_{X^1(T_1,T_1+s)}
 \lesssim\varepsilon^2.
\]
Thus, by the definition of $Z'$ and the linear estimate,
\[
 h(s)^2
 \lesssim E\bigl(
 \|\mathbf u\|_{Z(T_1,T_1+s)}+\varepsilon^2
 \bigr)
 \lesssim E(\varepsilon+\varepsilon^2).
\]
Taking $\varepsilon$ sufficiently small gives
$h(s)\le\delta_0(E)/2$.
By continuity, there exists $T_2>T$, with $T_2-T_1<1$,
such that $h(T_2-T_1)<\delta_0(E)$.
Part (1) and uniqueness therefore extend $\mathbf u$
beyond $T$.

Finally, partition $I$ into finitely many intervals $J$
on which $\|\mathbf u\|_{Z(J)} \le \eta$ for some small $\eta$.
\Cref{lem:nonlinear-estimates} gives, for some $t_* \in \overline{J}$
\[
\begin{aligned}
 \|\mathbf u\|_{X^1(J)}
 &\le C_0\|\mathbf u(t_*)\|_{H^1}
      +C_1\|\mathbf u\|_{Z'(J)}^2\\
 &\le C_0\|\mathbf u(t_*)\|_{H^1}
      +C_1\eta\|\mathbf u\|_{X^1(J)}.
\end{aligned}
\]
Choose that $\eta$ small enough, we get $\|\mathbf u\|_{X^1(J)}\lesssim \|\mathbf{u(t_{*})}\|_{H^1}$.
The number of intervals depends only on
$\|\mathbf u\|_{Z(I)}$, 
iterating from $t_0$ yields
\[
 \|\mathbf u\|_{X^1(I)}
 \le C\bigl(\|\mathbf u(t_0)\|_{H^1},
           \|\mathbf u\|_{Z(I)}\bigr).
\]
\end{proof}
\subsection{Stability}
Now, we establish stability for our system \eqref{eq:system1}.
\begin{proposition}[Stability]
\label{prop:stability-h1}
Let $I=[a,b]$ with $|I|\le1$, and let
$\widetilde{\textbf{u}}\in C(I;H^1)\cap X^1(I)$
satisfy, in the Duhamel sense,
\begin{equation}\label{error}
     (i\partial_t+\Delta)\widetilde{\textbf{u}}
 =-F(\widetilde{\textbf{u}})+\textbf{e},
\end{equation}
\begin{equation}
     \|\widetilde{\textbf{u}}\|_{Z(I)}
 +\|\widetilde{\textbf{u}}\|_{L^\infty_tH^1_x(I)}
 \le M.
\end{equation}
Suppose $t_0\in I$, $\textbf{u}_0\in H^1$, and
\[
 \|\textbf{u}_0-\widetilde{\textbf{u}}(t_0)\|_{H^1}
 +\|\textbf{e}\|_{N^1(I)}
 \le\varepsilon\le\varepsilon_1(M).
\]
Then there exists a unique strong solution $\textbf{u}$ of the system \eqref{eq:system1} on $I$, with $\textbf{u}(t_0)=\textbf{u}_0$, such that
\begin{equation}
\label{eq:stability-h1-bound}
 \|\textbf{u}\|_{X^1(I)}
 +\|\widetilde{\textbf{u}}\|_{X^1(I)}
 \le C(M),
 \qquad
 \|\textbf{u}-\widetilde{\textbf{u}}\|_{X^1(I)}
 \le C(M)\varepsilon.
\end{equation}
\end{proposition}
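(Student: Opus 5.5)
The argument follows the standard Ionescu--Pausader scheme. The first task is to upgrade the control of $\widetilde{\mathbf u}$ from the $Z$-norm to the $X^1$-norm. Using $\|\widetilde{\mathbf u}\|_{Z(I)}\le M$, partition $I$ into $K=K(M,\eta)$ consecutive intervals $I_k=[t_k,t_{k+1}]$ with $\|\widetilde{\mathbf u}\|_{Z(I_k)}\le\eta$, where $\eta=\eta(M)$ is small. The number $K$ is finite because the $Z$-norm is an $\ell^3$-sum of $L^3_{t,x}$ pieces, which are absolutely continuous in time.

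On each $I_k$ we write Duhamel for $\widetilde{\mathbf u}$ from $t_k$, including the error term. By \Cref{lem:nonlinear-estimates}, \eqref{eq:quadratic-system-xs}, and the definition of $Z'$,
\[
\|\widetilde{\mathbf u}\|_{X^1(I_k)}\le C\|\widetilde{\mathbf u}(t_k)\|_{H^1}+C\eta^{1/2}\|\widetilde{\mathbf u}\|_{X^1(I_k)}^{3/2}+\|\mathbf e\|_{N^1(I_k)}.
\]
A continuity argument in the right endpoint then gives $\|\widetilde{\mathbf u}\|_{X^1(I_k)}\lesssim M$ once $\eta$ is small depending on $M$. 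Here we use $\|\widetilde{\mathbf u}(t_k)\|_{H^1}\le M$ and $\|\mathbf e\|_{N^1}\le\varepsilon\le1$. Summing over the $K$ intervals yields $\|\widetilde{\mathbf u}\|_{X^1(I)}\le C(M)$.

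Next we set $\mathbf w=\mathbf u-\widetilde{\mathbf u}$. It solves
\[
(i\partial_t+\Delta)\mathbf w=-[F(\widetilde{\mathbf u}+\mathbf w)-F(\widetilde{\mathbf u})]-\mathbf e .
\]
Since $F$ is quadratic (with conjugates), the difference $F(\widetilde{\mathbf u}+\mathbf w)-F(\widetilde{\mathbf u})$ consists of terms bilinear in $(\widetilde{\mathbf u},\mathbf w)$ plus terms quadratic in $\mathbf w$. The multilinear estimate behind \Cref{lem:nonlinear-estimates} is in fact bilinear; its proof bounds each trilinear form by a product of $Z'$-norms of the two inputs. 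It therefore gives, on each $I_k$,
\[
\|\mathbf w\|_{X^1(I_k)}\le C\|\mathbf w(t_k)\|_{H^1}+C\|\widetilde{\mathbf u}\|_{Z'(I_k)}\|\mathbf w\|_{X^1(I_k)}+C\|\mathbf w\|_{X^1(I_k)}^2+\|\mathbf e\|_{N^1(I_k)} .
\]
Here $\|\mathbf w\|_{Z'}\lesssim\|\mathbf w\|_{X^1}$ by \Cref{Yembedding}, and $\|\widetilde{\mathbf u}\|_{Z'(I_k)}\le\eta^{1/2}C(M)^{1/2}$. Choosing $\eta$ small depending on $M$ absorbs the linear term.

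With $\mathbf u$ constructed via \Cref{prop:local-theory-h1} and continued as long as $\|\mathbf w\|_{X^1}$ stays small, a bootstrap gives $\|\mathbf w\|_{X^1(I_k)}\le 2C(\|\mathbf w(t_k)\|_{H^1}+\varepsilon)$, provided the right-hand side is small. Using $\|\mathbf w(t_{k+1})\|_{H^1}\lesssim\|\mathbf w\|_{X^1(I_k)}$ from \eqref{eq:XY-energy} and iterating over $k$ gives $\|\mathbf w\|_{X^1(I)}\le (4C)^K\varepsilon=C(M)\varepsilon$. This requires $\varepsilon_1(M)$ small enough that every intermediate bound remains in the perturbative regime.

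Existence on the whole of $I$ follows from part (2) of \Cref{prop:local-theory-h1}, since $\|\mathbf u\|_{Z}\le\|\widetilde{\mathbf u}\|_Z+C\|\mathbf w\|_{X^1}$ stays finite. Uniqueness comes from \Cref{prop:local-theory-h1}.

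The main obstacle is the step that requires the \emph{bilinear} (polarized) version of \eqref{eq:quadratic-system-xs}, placing the small $Z'$-norm on $\widetilde{\mathbf u}$ and the $X^1$-norm on $\mathbf w$. The lemma is stated only for the diagonal $F(\mathbf u)$. Its proof, however, estimates each term as a product $\|u_1\|_{Z'}\|u_2\|_{Z'}$ of distinct inputs, via \eqref{eq:trilinear-geometric-mean}. So I would record the polarized form explicitly, checking that the conjugation patterns in $u_2\overline{u_1}$ match the admissible triples $(a_0,a_1,a_2)$ of \Cref{prop:trilinear-frequency-bound}. A secondary point is that the error enters only through $N^1$ on subintervals. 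Restricting $\|\mathbf e\|_{N^1(I)}$ to $I_k$ with a new base point costs at most a constant, because free evolutions have bounded $X^1$-norm (\Cref{prop-pre-Xs-Hs}).
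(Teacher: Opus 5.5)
Your proposal is correct and follows essentially the same route as the paper. You partition $I$ into $O_M(1)$ subintervals on which $\|\widetilde{\mathbf u}\|_{Z}\le\eta$, upgrade $\widetilde{\mathbf u}$ to an $X^1$ bound via \Cref{lem:nonlinear-estimates}, and then control $\mathbf w=\mathbf u-\widetilde{\mathbf u}$ interval by interval through $a_{k+1}\le C(a_k+\varepsilon)$. The differences are cosmetic: the paper uses \eqref{eq:quadratic-nonlinear-n}, i.e.\ $\|\widetilde{\mathbf u}\|_{Z'}^2\le\eta\|\widetilde{\mathbf u}\|_{X^1}$, so the nonlinear term is absorbed linearly with no continuity argument, and it builds $\mathbf w$ on each $I_k$ by a contraction mapping rather than a bootstrap plus the blow-up criterion, tacitly using the polarized bilinear estimate you make explicit.
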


\begin{proof}
We first bound the approximate solution in $X^1$.
Partition $I$ on each side of $t_0$ into intervals
$I_k=[t_k,t_{k+1}]$ such that $
 \|\widetilde{\textbf{u}}\|_{Z(I_k)}\le\eta$.
By the linear estimate and
$X^1\hookrightarrow L^\infty_tH^1_x$,
\[
 \left\|
 \int_{t_k}^t e^{i(t-r)\Delta}\textbf{e}(r)\,dr
 \right\|_{X^1(I_k)}
 \lesssim\|\textbf{e}\|_{N(I)}
 \le\varepsilon.
\]
So Duhamel's formula and \Cref{lem:nonlinear-estimates} imply
\begin{align*}
 \|\widetilde{\textbf{u}}\|_{X^1(I_k)}
 \le CM
 +C\|\widetilde{\textbf{u}}\|_{Z'(I_k)}^2
 +C\varepsilon
 \le C(M+1)
 +C\eta\|\widetilde{\textbf{u}}\|_{X^1(I_k)},
\end{align*}
where we take $\varepsilon_1(M)\le1$. Choosing $C\eta\le1/2$,
we obtain
\begin{equation}\label{X1}
     \|\widetilde{\textbf{u}}\|_{X^1(I_k)}
 \le C(M+1)
\end{equation}
\begin{equation}\label{X2}
 \|\widetilde{\textbf{u}}\|_{Z'(I_k)}
 \le C\eta^{1/2}(M+1)^{1/2}.
\end{equation}

Set $\textbf{w}=\textbf{u}-\widetilde{\textbf{u}}$ and
$a_k=\|\textbf{w}(t_k)\|_{H^1}$.
On $I_k$, the error satisfies
\begin{align*}
 \textbf{w}(t)
 =e^{i(t-t_k)\Delta}\textbf{w}(t_k) +i\int_{t_k}^t e^{i(t-r)\Delta}
 \bigl[
 F(\widetilde{\textbf{u}}+\textbf{w})
 -F(\widetilde{\textbf{u}})
 \bigr](r)\,dr
 +i\int_{t_k}^t e^{i(t-r)\Delta}\textbf{e}(r)\,dr.
\end{align*}
Set
\[
\Phi(\textbf{w})  =e^{i(t-t_k)\Delta}\textbf{w}(t_k) +i\int_{t_k}^t e^{i(t-r)\Delta}
 \bigl[
 F(\widetilde{\textbf{u}}+\textbf{w})
 -F(\widetilde{\textbf{u}})
 \bigr](r)\,dr
 +i\int_{t_k}^t e^{i(t-r)\Delta}\textbf{e}(r)\,dr,
\]
where 
\begin{align*}
 F_1(\widetilde{\textbf{u}}+\textbf{w})
 -F_1(\widetilde{\textbf{u}})
 &=\widetilde u_2\overline{w_1}
   +w_2\overline{\widetilde u_1}
   +w_2\overline{w_1},\\
 F_2(\widetilde{\textbf{u}}+\textbf{w})
 -F_2(\widetilde{\textbf{u}})
 &=2\widetilde u_1w_1+w_1^2.
\end{align*}
\Cref{lem:nonlinear-estimates}, \eqref{X1}
and $X^1 \hookrightarrow Z'$  give
\[
 \|\Phi(\textbf{w})\|_{X^1(I_k)}
 \le C(a_k+\varepsilon)
 +C\|\widetilde{\textbf{u}}\|_{Z'(I_k)}
       \|\textbf{w}\|_{X^1(I_k)}
 +C\|\textbf{w}\|_{X^1(I_k)}^2.
\]
Similarly, 
\[
\begin{aligned}
 \|\Phi(\mathbf w)-\Phi(\mathbf z)\|_{X^1(I_k)}
 \le&
 C(
 \|\widetilde{\mathbf u}\|_{Z'(I_k)}
 +\|\mathbf w\|_{X^1(I_k)}
 +\|\mathbf z\|_{X^1(I_k)}
 )
 \|\mathbf w-\mathbf z\|_{X^1(I_k)}.
\end{aligned}
\]
So we can choose $a_k +\varepsilon$ and  $\eta=\eta(M)$ in \eqref{X2} small enough to make $\Phi$ is a contraction map on the closed ball
\[
 \bigl\{
 \textbf{w}\in C(I_k;H^1)\cap X^1(I_k):
 \|\textbf{w}\|_{X^1(I_k)}
 \le2C(a_k+\varepsilon)
 \bigr\}.
\]
By contraction mapping theorem, there exists the solution on $I_k$ and gives
\[
 \|\textbf{w}\|_{X^1(I_k)}
 \le2C(a_k+\varepsilon),
 \qquad
 a_{k+1}\le C(a_k+\varepsilon).
\]
Since $a_0\le\varepsilon$ and the number of intervals depends
only on $M$, choose $\varepsilon_1(M)>0$ sufficiently small such that $a_k+\varepsilon$ small enough to iterate this process.
Iteration and \eqref{X1} yields
\[
 \sum_k\|\textbf{w}\|_{X^1(I_k)}
 \le C(M)\varepsilon,
 \qquad
 \sum_k\|\widetilde{\textbf{u}}\|_{X^1(I_k)}
 \le C(M).
\]
So we prove 
\eqref{eq:stability-h1-bound}. The intervals to the left of
$t_0$ are treated inversely, and uniqueness follows
from the local theory.
\end{proof}
\section{Euclidean profile}\label{se4}
We study Euclidean profile in this section.
Fix a radial $\eta\in C_c^\infty(\mathbb R^6;[0,1])$, with
$\eta=1$ on $B(0,1)$ and $\operatorname{supp}\eta\subset B(0,2)$.
For $\boldsymbol{\phi}\in\dot H^1(\mathbb R^6;\mathbb C^2)$ and $N\ge 1$, set
\begin{equation}\label{cutoff}
 \begin{aligned}
 Q_N\boldsymbol{\phi}(x)&=\eta(x/N^{1/2})\boldsymbol{\phi}(x),\\
 \boldsymbol{\phi}_N(x)&=N^2(Q_N\boldsymbol{\phi})(Nx),\\
\mathbf{f} _N(y)&=\boldsymbol{\phi}_N(\Psi^{-1}(y)).
 \end{aligned}
\end{equation}
Here $\Psi$ satisfies $\Psi(x)=x:B(0,1) \in \mathbb{R}^6 \to\mathbb T^6$. 
\begin{remark}
    There are some basic propositions of \eqref{cutoff}.
    By Hardy's inequality, 
\begin{equation}\label{lem42-cutoff-bounds}
 \|Q_N\boldsymbol{\phi}\|_{\dot H^1}
 \lesssim\|\boldsymbol{\phi}\|_{\dot H^1},
 \qquad
 \|Q_N\boldsymbol{\phi}\|_{L^2}
 \lesssim N^{1/2}\|\boldsymbol{\phi}\|_{\dot H^1}.
\end{equation}
Moreover,
\begin{equation}\label{lem42-cutoff-convergence}
 \|Q_N\boldsymbol{\phi}-\boldsymbol{\phi}\|_{\dot H^1}\lesssim
 \|\mathbf{1}_{\{|x|\ge N^{1/2}\}}
                  \nabla\boldsymbol{\phi}\|_{L^2}+
 \|\mathbf{1}_{\{|x|\ge N^{1/2}\}}
                  |x|^{-1}\boldsymbol{\phi}\|_{L^2}
 \longrightarrow0.
\end{equation}
Changing variables in \eqref{cutoff}, we consequently obtain
\[
 \|\textbf{f}_N\|_{L^2}
 =N^{-1}\|Q_N\boldsymbol{\phi}\|_{L^2}
 \lesssim N^{-1/2}\|\boldsymbol{\phi}\|_{\dot H^1},
 \qquad
 \|\textbf{f}_N\|_{H^1}
 \lesssim\|\boldsymbol{\phi}\|_{\dot H^1}.
\]
\end{remark}
For Euclidean scattering, one can refer to \cite{GMXZ} for detailed proof.
\begin{theorem}[Euclidean scattering]
\label{thm:euclidean-scattering}
Suppose that $\boldsymbol{\psi}\in\dot H^1(\mathbb R^6;\mathbb C^2)$ satisfies
\[
 E(\boldsymbol{\psi})<E(\mathbf{W}),\qquad
 \|\nabla\psi_1\|_2^2+\frac14\|\nabla\psi_2\|_2^2<H(\mathbf{W}).
\]
Then there is a unique global scattering solution
\[
 (i\partial_t+\Delta_{\mathbb R^6})\textbf{v}
 =-F(\textbf{v}),\qquad
 \textbf{v}(0)=\boldsymbol{\psi},\qquad
 \textbf{v}\in C(\mathbb R;\dot H^1).
\]
For every Schr\"odinger-admissible pair
 $\frac2q+\frac6r=3, 2\le q\le\infty,$
one has
\begin{equation}
\label{eq:euclidean-spacetime}
 \|\nabla\textbf{v}\|_{L_t^qL_x^r}
 +\|\textbf{v}\|_{L_{t,x}^4}
 +\|F(\textbf{v})\|_{L_t^1\dot H_x^1}
 \le C_{\boldsymbol{\psi}}.
\end{equation}
There are scattering states
$\boldsymbol{\psi}^{\pm\infty}\in\dot H^1(\mathbb R^6;\mathbb C^2)$ such that
\[
 \lim_{t\to\pm\infty}
 \|\textbf{v}(t)
   -e^{it\Delta_{\mathbb R^6}}\boldsymbol{\psi}^{\pm\infty}\|_{\dot H^1}
 =0.
\]

If $\boldsymbol{\psi}\in\mathcal S(\mathbb R^6;\mathbb C^2)$, then $\textbf{v}$
is smooth and all its Schwartz seminorms are bounded on every
compact time interval.
\end{theorem}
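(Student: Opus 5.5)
The plan is to follow the Kenig--Merle scheme as it is carried out in \cite{GMXZ} for the quadratic system on $\mathbb R^6$. The paper states that this theorem can be taken from there, so the proof amounts to checking that the argument goes through. Throughout, $\mathbf W$ is the ground state from \eqref{ground}. The facts we need are the sharp Sobolev identity $C_6^{-6}=4E(\mathbf W)=\frac43 H(\mathbf W)$ and the Gagliardo--Nirenberg-type bound
\[
\Bigl|\int\overline{u_2}u_1^2\Bigr|\le C\,H(\mathbf u)^{3/2},
\]
whose extremizer is $\mathbf W$, with $\int \overline{W_2}W_1^2=H(\mathbf W)$.

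\emph{Step 1: coercivity (energy trapping).} Put $f(y)=y-Cy^{3/2}$. Then $E(\mathbf u)\ge f(H(\mathbf u))$, and the maximum of $f$ is attained at $y=H(\mathbf W)$, with value $E(\mathbf W)=\frac13H(\mathbf W)$. Energy is conserved and $t\mapsto H(\mathbf u(t))$ is continuous. So if $E<E(\mathbf W)$ and $H(\mathbf u_0)<H(\mathbf W)$, a continuity argument keeps $H(\mathbf u(t))\le(1-\delta)H(\mathbf W)$ for all $t$. This gives a uniform $\dot H^1$ bound and also positivity of the virial quantity $8H-6\operatorname{Re}\int\overline{u_2}u_1^2\gtrsim_\delta H$.

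\emph{Step 2: local theory and minimal blowup solution.} Local well-posedness and stability in $\dot H^1$ come from Strichartz estimates, as in \cite{HOT2013,HLO2011}. Define $E_c$ as the supremum of energies below which every solution obeying the kinetic constraint has finite $L^4_{t,x}$ norm. Small-data theory gives $E_c>0$. Suppose $E_c<E(\mathbf W)$. Then the linear profile decomposition in $\dot H^1(\mathbb R^6)$ (modulo translations, scalings, and time shifts, applied componentwise with the propagators $e^{it\Delta}$ and $e^{\frac i2 t\Delta}$), together with energy decoupling, the stability theory, and the trapping from Step 1 applied to each profile, produces a critical solution $\mathbf u_c$. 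This solution is almost periodic modulo symmetries: there are $x(t),N(t)$ such that $\{N(t)^{-2}\mathbf u_c(t,x(t)+\cdot/N(t))\}$ is precompact in $\dot H^1$.

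\emph{Step 3: rigidity.} This is the main obstacle. For nonradial data, the translation parameter $x(t)$ must be controlled, and that needs a vanishing momentum. Mass resonance ($\kappa=\frac12$) is exactly what gives Galilean invariance, and Galilean boosts then let us normalize the momentum $\operatorname{Im}\int(\overline{u_1}\nabla u_1+\frac12\overline{u_2}\nabla u_2)$ to zero. From this we get $|x(t)|=o(t)$ in the soliton-like regime. We then exclude each scenario separately. In the finite-time blowup case ($N(t)\to\infty$), a localized mass argument shows the mass is zero, a contradiction. In the global case with $N(t)\gtrsim1$, we use the truncated virial identity with weight $\phi_R(x)\approx|x|^2$:
\[
\frac{d}{dt}\operatorname{Im}\int\nabla\phi_R\cdot(\overline{u_1}\nabla u_1+\tfrac12\overline{u_2}\nabla u_2)\ge c\,H(\mathbf u)-o_R(1),
\]
uniformly in $t$, using compactness and $|x(t)|=o(t)$. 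Integrating over $[0,T]$ gives $cTH\lesssim R\sup_t\|\mathbf u\|_{\dot H^1}^2$, and with $R=o(T)$ this contradicts $H>0$. Without mass resonance the momentum normalization fails, so one would have to restrict to radial data, as \cite{GMXZ} does. The most delicate technical point is checking that the cross terms from $\operatorname{Re}\int\overline{u_2}u_1^2$ in the virial identity combine correctly, and this uses $\kappa=\frac12$.
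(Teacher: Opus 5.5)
Your overall route, Kenig--Merle as carried out in \cite{GMXZ}, is the right one; the paper gives nothing more, since it simply cites that work. Steps 1--2 are essentially fine, up to the constants noted below. The gap is in Step 3, which tacitly treats the critical element as having finite mass. A priori $\mathbf u_c$ lies only in $L^\infty_t\dot H^1(\mathbb R^6)$, and $\dot H^1(\mathbb R^6)$ embeds neither in $L^2$ nor in $\dot H^{1/2}$. This breaks Step 3 in four places.

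(i) The momentum $P=\operatorname{Im}\int(\overline{u_1}\nabla u_1+\frac12\overline{u_2}\nabla u_2)$ need not be defined. The usual way to get $P=0$ presupposes $M<\infty$: a boost by $\xi=-P/M$ lowers both $E$ and $H$ by $|P|^2/M$, contradicting minimality of $E_c$ unless $P=0$.

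(ii) The localized center-of-mass argument giving $|x(t)|=o(t)$ needs $L^2$-tightness of $\mathbf u_c(t)$ around $x(t)$.

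(iii) Your bound $cTH\lesssim R\sup_t\|\mathbf u\|_{\dot H^1}^2$ is false. Under $\mathbf u\mapsto\lambda^2\mathbf u(\lambda\,\cdot)$, the action $\operatorname{Im}\int\nabla\phi_R\cdot(\overline{u_1}\nabla u_1+\frac12\overline{u_2}\nabla u_2)$ at radius $R$ becomes $\lambda^{-2}$ times the action at radius $\lambda R$. So no bound of the form $CR\|\mathbf u\|_{\dot H^1}^2$ can hold. What is true is a bound $\lesssim R\,M(\mathbf u)^{1/2}\|\nabla\mathbf u\|_{L^2}$ when the mass is finite. Without finite mass you only get $\lesssim R^2\|\nabla\mathbf u\|_{L^2}^2$ (Hardy, compactly supported weight). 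With the $R^2$ bound and $R\sim\eta T$, which is what $|x(t)|=o(t)$ forces, you get $cT\lesssim\eta^2T^2$, which is no contradiction. A fixed $R$ works only when $x(t)$ stays bounded, e.g.\ for radial data.

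(iv) ``The global case with $N(t)\gtrsim1$'' also contains the low-to-high frequency cascade ($\inf_tN(t)\ge1$, $\limsup_tN(t)=\infty$). For that scenario you have no control of $x(t)$ at all.

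The missing idea is the additional-regularity step of Killip--Vi\c{s}an \cite{KillipVisan2010} for $d\ge5$. First reduce to three scenarios: finite-time blowup, soliton-like ($N(t)\equiv1$), and cascade. Then show by the double Duhamel trick that almost periodic solutions with $\inf_tN(t)\ge1$ lie in $L^\infty_t\dot H^{-\varepsilon}$, hence in $L^\infty_tL^2$. The cascade is then excluded by mass conservation: interpolating the $\dot H^{-\varepsilon}$ bound with $\dot H^1$-compactness gives $\|\mathbf u(t_n)\|_{L^2}\to0$ along $N(t_n)\to\infty$. The soliton is excluded by your Galilean/virial argument with the corrected action bound.

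Two smaller corrections. First, $\int\overline{W_2}W_1^2=\frac23H(\mathbf W)$, not $H(\mathbf W)$: testing the ground-state equations against $W_1,W_2$ gives $\|\nabla W_1\|_2^2=\frac12\|\nabla W_2\|_2^2=\int W_2W_1^2$. Only this value makes your claims about $f$ true. Second, the virial functional must vanish at $\mathbf W$. The correct identity is $\frac{d}{dt}\operatorname{Im}\int 2x\cdot(\overline{u_1}\nabla u_1+\frac12\overline{u_2}\nabla u_2)=4H(\mathbf u)-6\operatorname{Re}\int\overline{u_2}u_1^2$, not $8H-6\operatorname{Re}\int\overline{u_2}u_1^2$. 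With weights $(1,\frac12)$ the nonlinear cross terms combine for every $\kappa$. So the real role of $\kappa=\frac12$ is the Galilean boost, not the virial algebra.
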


\begin{lemma}\label{lem:euclidean-approximation}
Let $\boldsymbol{\phi}\in\dot H^1(\mathbb R^6;\mathbb C^2)$
satisfy the initial-value hypotheses of \Cref{thm:euclidean-scattering}.
Let $T_0>0$, and define $\textbf{f}_N$ from
$\boldsymbol{\phi}$ by \eqref{cutoff}. Set $I_N=[-T_0N^{-2},T_0N^{-2}]$, then
the following assertions hold.

\begin{enumerate}
\item
For all $N\ge N_0(\boldsymbol{\phi},T_0)$, there exists a unique
strong solution $\textbf{U}_N$ on $I_N$ satisfying
\[
 (i\partial_t+\Delta)\textbf{U}_N=-F(\textbf{U}_N),
 \qquad
 \textbf{U}_N(0)=\textbf{f}_N,
 \qquad
 \|\textbf{U}_N\|_{X^1(I_N)}\le C_{\boldsymbol{\phi}}.
\]

\item
Let $0<\varepsilon_1\le\varepsilon_1(\boldsymbol{\phi})$,
and choose
$\boldsymbol{\phi}'\in\mathcal S(\mathbb R^6;\mathbb C^2)$
such that
\begin{equation}\label{app}
     \|\boldsymbol{\phi}-\boldsymbol{\phi}'\|_{\dot H^1}
 \le\varepsilon_1.
\end{equation}
Let $\textbf{v}'$ be the Euclidean solution of the initial-value problem
\[(i\partial_t+\Delta_{\mathbb R^6})\textbf{v}'
 =-F(\textbf{v}'),\qquad
 \textbf{v}'(0)=\boldsymbol{\phi}'.
 \] 
 And define
\[
 \textbf{v}'_R(s,x)=\eta(x/R)\textbf{v}'(s,x),
 \qquad
 \textbf{V}_{R,N}(t,y)
 =N^2\textbf{v}'_R(N^2t,N\Psi^{-1}(y)).
\]
There exists
$R_0=R_0(T_0,\boldsymbol{\phi}',\varepsilon_1)$ such that,
for every fixed $R\ge R_0$,
\[
 \limsup_{N\to\infty}
 \|\textbf{U}_N-\textbf{V}_{R,N}\|_{X^1(I_N)}
 \le C_{\boldsymbol{\phi}}\varepsilon_1.
\]
\end{enumerate}
\end{lemma}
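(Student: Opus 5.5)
The plan follows Ionescu--Pausader's Lemma 4.2 for $\mathbb T^3$: first build an approximate solution $\mathbf V_{R,N}$ from the Euclidean solution, then compare it with the true solution using \Cref{prop:stability-h1}. Part (1) will be a consequence of part (2). Choose $\boldsymbol{\phi}'$ Schwartz with \eqref{app}. Since the thresholds in \Cref{thm:euclidean-scattering} are open conditions, $\boldsymbol{\phi}'$ still satisfies them once $\varepsilon_1$ is small. Then $\mathbf v'$ is global and smooth, obeys \eqref{eq:euclidean-spacetime}, and has Schwartz seminorms bounded on $[-T_0,T_0]$.

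\emph{Step 1: the error of $\mathbf V_{R,N}$.} Because $I_N$ has length $\le 1$ and is rescaled, $\mathbf V_{R,N}$ is supported in $|y|\le 2R/N$ and solves
\[
(i\partial_t+\Delta)\mathbf V_{R,N}=-F(\mathbf V_{R,N})+\mathbf e_{R,N},
\]
where $\mathbf e_{R,N}(t,y)=N^4\mathbf e_R(N^2t,Ny)$ and
\[
\mathbf e_R=F(\eta_R\mathbf v')-\eta_R F(\mathbf v')+R^{-2}(\Delta\eta)(\cdot/R)\,\mathbf v'+2R^{-1}(\nabla\eta)(\cdot/R)\cdot\nabla\mathbf v'.
\]
The second component carries the factor $\tfrac12$ on the Laplacian terms. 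Every term is supported in $|x|\ge R$, and the Schwartz decay of $\mathbf v'$ on $[-T_0,T_0]$ gives $\|\mathbf e_R\|_{L^1_s\dot H^1}\to0$ as $R\to\infty$. To bound the $N^1(I_N)$ norm I will use the crude estimate $\|\mathbf e\|_{N^1(I)}\lesssim\|\mathbf e\|_{L^1_tH^1_x(I)}$, which follows from \Cref{prop-pre-Xs-Hs} and Minkowski. After scaling, the $\dot H^1$ part is $\|\mathbf e_R\|_{L^1\dot H^1}$, and the $L^2$ part carries an extra $N^{-1}$. So $\|\mathbf e_{R,N}\|_{N^1(I_N)}\le\varepsilon_1$ once $R\ge R_0$ and $N$ is large. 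The bounds $\|\mathbf V_{R,N}\|_{L^\infty H^1}\lesssim_{\boldsymbol{\phi}} 1$ are likewise immediate from the scaling.

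\emph{Step 2: the $Z$ norm of $\mathbf V_{R,N}$ (the main obstacle).} \Cref{prop:stability-h1} needs $\|\mathbf V_{R,N}\|_{Z(I_N)}\lesssim_{\boldsymbol{\phi}}1$ uniformly in $N$. By \Cref{Yembedding} it suffices to bound $\|\mathbf V_{R,N}\|_{X^1(I_N)}$, and for that I will argue by Duhamel: $\mathbf V_{R,N}=e^{it\Delta}\mathbf V_{R,N}(0)+\text{Duhamel}(F(\mathbf V_{R,N})-\mathbf e_{R,N})$. The difficulty is that $\|F(\mathbf V_{R,N})\|_{N^1}$ is not simply $L^1H^1$-small. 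To deal with this, I will split $I_N$ into $O_{\boldsymbol{\phi}}(1)$ subintervals on which the Euclidean norm $\|\nabla\mathbf v'\|_{L^2_tL^3_x}$, rescaled, is tiny. On each piece I estimate $\|F(\mathbf V)\|_{L^1H^1}\le\|\nabla \mathbf V\|_{L^2L^3}\|\mathbf V\|_{L^2L^6}$; both factors are scale-invariant and small by \eqref{eq:euclidean-spacetime}. An alternative, if the subdivision is awkward, is to use the available bound $\|F(\mathbf v)\|_{L^1\dot H^1}\le C$ directly after rescaling. Summing the pieces gives $\|\mathbf V_{R,N}\|_{X^1(I_N)}\le C_{\boldsymbol{\phi}}$, with the constant independent of $R$ and $N$.

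\emph{Step 3: conclusion.} Next, $\|\mathbf f_N-\mathbf V_{R,N}(0)\|_{H^1}\lesssim\|\boldsymbol{\phi}-\boldsymbol{\phi}'\|_{\dot H^1}+\|Q_N\boldsymbol{\phi}-\boldsymbol{\phi}\|_{\dot H^1}+\|(1-\eta_R)\boldsymbol{\phi}'\|_{\dot H^1}+O(N^{-1/2})$. This uses \eqref{lem42-cutoff-bounds} and \eqref{lem42-cutoff-convergence}, and it is $\le 2\varepsilon_1$ for $R\ge R_0$ and $N$ large. Then \Cref{prop:stability-h1} with $M=C_{\boldsymbol{\phi}}$, applied on $I_N$, produces the solution $\mathbf U_N$ and the estimate $\|\mathbf U_N-\mathbf V_{R,N}\|_{X^1(I_N)}\le C_{\boldsymbol{\phi}}\varepsilon_1$; this proves (2). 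Part (1) follows by fixing $\varepsilon_1=\varepsilon_1(\boldsymbol{\phi})$ and one $R$: this gives existence and the bound $\|\mathbf U_N\|_{X^1(I_N)}\le C_{\boldsymbol{\phi}}$ for $N\ge N_0$, and uniqueness comes from \Cref{prop:local-theory-h1}.
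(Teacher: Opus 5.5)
Your proposal is correct and follows essentially the same route as the paper. You cut off and rescale the Schwartz Euclidean solution, bound $\mathbf V_{R,N}$ in $X^1(I_N)$ through Duhamel with the crude $L^1_tH^1_x$ estimate, make the error and the initial mismatch $O(\varepsilon_1)$, and conclude with \Cref{prop:stability-h1}.

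The subdivision in your Step 2 is unnecessary. The Duhamel bound is linear in the forcing, so mere boundedness of $\|F(\mathbf v'_R)\|_{L^1_s\dot H^1_x}$ already suffices; this is your ``alternative,'' and it is exactly what the paper does. Your remark that the threshold conditions are open, so that $\boldsymbol{\phi}'$ inherits them, fills in a point the paper leaves implicit.
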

\begin{proof}
Write $J=[-T_0,T_0]$, so that $I_N=N^{-2}J$.
We always take $N$ sufficiently large such that
$|I_N|\le1$ and $N\ge10R$.

Choose $\varepsilon_1(\boldsymbol{\phi})\le1$ sufficiently small.
By \Cref{thm:euclidean-scattering}, there exists a solution $\mathbf{v}'$ with the initial value  $\boldsymbol{\phi}'$ satisfying
\begin{equation}\label{lem42-euclidean-bound}
 \sup_{s\in\mathbb R}\|\textbf{v}'(s)\|_{\dot H^1}
 +\|F(\textbf{v}')\|_{L_s^1\dot H_x^1(\mathbb R\times\mathbb R^6)}
 \le C_{\boldsymbol{\phi}}.
\end{equation}

Direct computation gives
\[
 (i\partial_s+\Delta_{\mathbb R^6})\textbf{v}'_R
 =-F(\textbf{v}'_R)+\textbf{e}_R,
\]
where
\[
\begin{aligned}
 (\textbf{e}_R)_1
 &=
 \bigl[\eta(x/R)^2-\eta(x/R)\bigr]
             v'_2\overline{v'_1}+
 2R^{-1}(\nabla\eta)(x/R)\cdot\nabla v'_1
 +R^{-2}(\Delta\eta)(x/R)v'_1,\\
 (\textbf{e}_R)_2
 &=
 \bigl[\eta(x/R)^2-\eta(x/R)\bigr](v'_1)^2 +
 R^{-1}(\nabla\eta)(x/R)\cdot\nabla v'_2
 +\frac12R^{-2}(\Delta\eta)(x/R)v'_2.
\end{aligned}
\]

Since $\boldsymbol{\phi}'\in\mathcal S$, all higher Sobolev
norms of $\textbf{v}'$ are bounded on $J$, thus by chain rule,
\[
\begin{aligned}
 \|\textbf{e}_R\|_{L_s^1\dot H_x^1(J)}
 &\lesssim
 \|\mathbf{1}_{\{|x|\ge R\}}
             \nabla F(\textbf{v}')\|_{L_s^1L_x^2(J)}
 +R^{-1}\|F(\textbf{v}')\|_{L_s^1L_x^2(J)}\\
 &\quad+
 R^{-1}\|\nabla^2\textbf{v}'\|_{L_s^1L_x^2(J)}
 +R^{-2}\|\nabla\textbf{v}'\|_{L_s^1L_x^2(J)}
 +R^{-3}\|\textbf{v}'\|_{L_s^1L_x^2(J)}
 \longrightarrow0
\end{aligned}
\]
as $R\to\infty$.
Also, we have 
\[
 \|F(\textbf{v}'_R)-F(\textbf{v}')\|_{L_s^1\dot H_x^1(J)} \lesssim
 \|\mathbf{1}_{\{|x|\ge R\}}
             \nabla F(\textbf{v}')\|_{L_s^1L_x^2(J)}
 +R^{-1}\|F(\textbf{v}')\|_{L_s^1L_x^2(J)}
 \longrightarrow0.
\]
And by the product rule, $\|(1-\eta(\cdot/R))\boldsymbol{\phi}'\|_{\dot H^1}\to0$.
We may therefore choose
$R_0=R_0(T_0,\boldsymbol{\phi}',\varepsilon_1)$ so that,
for every $R\ge R_0$,
\begin{equation}\label{lem42-cutoff-small}
 \|\textbf{e}_R\|_{L_s^1\dot H_x^1(J)}
 +\|F(\textbf{v}'_R)-F(\textbf{v}')\|_{L_s^1\dot H_x^1(J)}
 +\|(1-\eta(\cdot/R))\boldsymbol{\phi}'\|_{\dot H^1}
 \le\varepsilon_1.
\end{equation}

For the solution $\textbf{V}_{R,N}$, it satisfies
\[
 (i\partial_t+\Delta)\textbf{V}_{R,N}
 =-F(\textbf{V}_{R,N})+\textbf{E}_{R,N},
 \qquad
 \textbf{E}_{R,N}(t,y)
 =N^4\textbf{e}_R(N^2t,N\Psi^{-1}(y)).
\]
The scaling gives
\begin{equation}\label{lem42-error-scaling}
\begin{aligned}
 \|\textbf{E}_{R,N}\|_{L_t^1L_y^2(I_N)}
 &=N^{-1}\|\textbf{e}_R\|_{L_s^1L_x^2(J)},\\
 \|\nabla_y\textbf{E}_{R,N}\|_{L_t^1L_y^2(I_N)}
 &=\|\nabla_x\textbf{e}_R\|_{L_s^1L_x^2(J)}.
\end{aligned}
\end{equation}

We now verify the hypotheses of \Cref{prop:stability-h1}.
First, Duhamel formula, 
together with \eqref{lem42-error-scaling} give
\begin{equation}\label{lem42-X-estimate}
\begin{aligned}
 \|\textbf{V}_{R,N}\|_{X^1(I_N)} &\lesssim
\|\mathbf V_{R,N}(0)\|_{H^1}
+\|F(\mathbf V_{R,N})\|_{L_t^1H_x^1(I_N)}
+\|\mathbf E_{R,N}\|_{L_t^1H_x^1(I_N)}\\
 &\lesssim
 \|\eta(\cdot/R)\boldsymbol{\phi}'\|_{\dot H^1}
 +\|F(\textbf{v}'_R)\|_{L_s^1\dot H_x^1(J)}
 +\|\textbf{e}_R\|_{L_s^1\dot H_x^1(J)}\\
 &\quad+
 N^{-1}\Bigl(
 \|\eta(\cdot/R)\boldsymbol{\phi}'\|_{L^2}
 +\|F(\textbf{v}'_R)\|_{L_s^1L_x^2(J)}
 +\|\textbf{e}_R\|_{L_s^1L_x^2(J)}
 \Bigr).
\end{aligned}
\end{equation}

To bound the first term in \eqref{lem42-X-estimate},
note that $(\nabla\eta)(x/R)$ is supported in
$\{R\le|x|\le2R\}$, where $R^{-1}\le2|x|^{-1}$.
The product rule and Hardy's inequality therefore give
\begin{equation}\label{lem42-spatial-cutoff-bound}
\begin{aligned}
 \|\eta(\cdot/R)\boldsymbol{\phi}'\|_{\dot H^1}
 &\le
 \|\eta(\cdot/R)\nabla\boldsymbol{\phi}'\|_{L^2}
 +R^{-1}\|(\nabla\eta)(\cdot/R)\boldsymbol{\phi}'\|_{L^2}\\
 &\lesssim
 \|\nabla\boldsymbol{\phi}'\|_{L^2}
 +\||x|^{-1}\boldsymbol{\phi}'\|_{L^2}\\
 &\lesssim
 \|\boldsymbol{\phi}'\|_{\dot H^1}
 \le\|\boldsymbol{\phi}\|_{\dot H^1}+\varepsilon_1
 \le C_{\boldsymbol{\phi}},
\end{aligned}
\end{equation}
where we used
$\|\boldsymbol{\phi}-\boldsymbol{\phi}'\|_{\dot H^1}
\le\varepsilon_1\le1$.
Next, the triangle inequality,
\eqref{lem42-euclidean-bound} and
\eqref{lem42-cutoff-small} imply
\[
 \|F(\textbf{v}'_R)\|_{L_s^1\dot H_x^1(J)}
 \le C_{\boldsymbol{\phi}}+\varepsilon_1,
 \qquad
 \|\textbf{e}_R\|_{L_s^1\dot H_x^1(J)}
 \le\varepsilon_1.
\]
And it is easy to see that
\[
\|\eta(\cdot/R)\boldsymbol{\phi}'\|_{L^2}
 +\|F(\textbf{v}'_R)\|_{L_s^1L_x^2(J)}
 +\|\textbf{e}_R\|_{L_s^1L_x^2(J)}< \infty
\]
because of $\boldsymbol{\phi}' \in \mathcal{S}$.
Substituting these bounds into
\eqref{lem42-X-estimate}, and letting $N\to\infty$
with $R$ fixed, we obtain
\[
 \limsup_{N\to\infty}
 \|\textbf{V}_{R,N}\|_{X^1(I_N)}
 \le C_{\boldsymbol{\phi}}.
\]
Using $X^1\hookrightarrow L_t^\infty H_x^1$ and
$X^1 \hookrightarrow Z$, for all sufficiently large $N$,
\[
 \|\textbf{V}_{R,N}\|_{L_t^\infty H_y^1(I_N)}
 +\|\textbf{V}_{R,N}\|_{Z(I_N)}
 \le M_{\boldsymbol{\phi}}.
\]

Second, changing variables at $t=0$ gives
\begin{equation}\label{lem42-initial-comparison}
 \|\textbf{f}_N-\textbf{V}_{R,N}(0)\|_{H^1}\lesssim
 \|Q_N\boldsymbol{\phi}
             -\eta(\cdot/R)\boldsymbol{\phi}'\|_{\dot H^1} +
 N^{-1}\|Q_N\boldsymbol{\phi}
             -\eta(\cdot/R)\boldsymbol{\phi}'\|_{L^2}.
\end{equation}
For the $\dot{H}^1$ term, the triangle inequality,
\eqref{lem42-cutoff-convergence}, \eqref{app} and \eqref{lem42-cutoff-small} give
\[
 \|Q_N\boldsymbol{\phi}
             -\eta(\cdot/R)\boldsymbol{\phi}'\|_{\dot H^1}\le
 \|Q_N\boldsymbol{\phi}-\boldsymbol{\phi}\|_{\dot H^1}
 +\|\boldsymbol{\phi}-\boldsymbol{\phi}'\|_{\dot H^1}+
 \|(1-\eta(\cdot/R))\boldsymbol{\phi}'\|_{\dot H^1}
 \le o_N(1)+2\varepsilon_1.
\]
For the $L^2$ term, the triangle inequality and \eqref{lem42-cutoff-bounds} yield
\[
N^{-1}\|Q_N\boldsymbol{\phi}
             -\eta(\cdot/R)\boldsymbol{\phi}'\|_{L^2}\le
 C N^{-1/2}\|\boldsymbol{\phi}\|_{\dot H^1}
 +N^{-1}\|\eta(\cdot/R)\boldsymbol{\phi}'\|_{L^2}
 \longrightarrow0.
\]
Combine these two estimates with
\eqref{lem42-initial-comparison}, we obtain
\[
 \limsup_{N\to\infty}
 \|\textbf{f}_N-\textbf{V}_{R,N}(0)\|_{H^1}
 \lesssim\varepsilon_1.
\]

Finally, the scaling identities
\eqref{lem42-error-scaling} give :
\[
 \|\textbf{E}_{R,N}\|_{N(I_N)}\lesssim
 \|\textbf{E}_{R,N}\|_{L_t^1H_y^1(I_N)}\lesssim
 \|\textbf{e}_R\|_{L_s^1\dot H_x^1(J)}
 +N^{-1}\|\textbf{e}_R\|_{L_s^1L_x^2(J)}.
\]
By \eqref{lem42-cutoff-small},  $\|\textbf{e}_R\|_{L_s^1\dot H_x^1(J)} \le \varepsilon_1$, and $\|\textbf{e}_R\|_{L_s^1L_x^2(J)} < \infty$
since $\boldsymbol{\phi}' \in \mathcal{S}$.
Hence
\[
 \limsup_{N\to\infty}
 \|\textbf{E}_{R,N}\|_{N(I_N)}
 \lesssim\varepsilon_1.
\]

Taking $\varepsilon_1(\boldsymbol{\phi})$ sufficiently small to apply \Cref{prop:stability-h1},
we obtain, for all sufficiently large $N$, there exists the solution
with initial value $\textbf{f}_N$ and
\[
 \|\textbf{U}_N\|_{X^1(I_N)}
 \le C_{\boldsymbol{\phi}},
 \qquad
 \|\textbf{U}_N-\textbf{V}_{R,N}\|_{X^1(I_N)}
 \le C_{\boldsymbol{\phi}}\varepsilon_1.
\]
Thus we prove this Lemma.
\end{proof}
\begin{lemma}[Extinction lemma]
\label{lem:extinction}
Let $\boldsymbol{\phi}\in\dot H^1(\mathbb R^6;\mathbb C^2)$,
and define $\textbf{f}_N$ by \eqref{cutoff}.
For every $\varepsilon>0$, there exist
$T=T(\boldsymbol{\phi},\varepsilon)$ and
$N_0=N_0(\boldsymbol{\phi},\varepsilon)\ge T$ such that,
for every $N\ge N_0$,
\begin{equation}\label{eq:extinction-z}
 \|e^{it\Delta}\textbf{f}_N\|_{Z([TN^{-2},T^{-1}])}
 +\|e^{it\Delta}\textbf{f}_N\|_{Z([-T^{-1},-TN^{-2}])}
 \le\varepsilon.
\end{equation}
Moreover,
\begin{equation}\label{eq:extinction-pointwise}
 \lim_{T\to\infty}\limsup_{N\to\infty}
 \sup_{TN^{-2}\le |t|\le T^{-1}}
 \left(
 \sup_{K\ge1}K^{-2}
 \|P_K e^{it\Delta}\textbf{f}_N\|_{L^\infty}
 +\|e^{it\Delta}\textbf{f}_N\|_{L^3}
 \right)=0.
\end{equation}
\end{lemma}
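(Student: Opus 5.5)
We follow the scheme of the extinction lemma in Ionescu--Pausader \cite{IPT3}. The plan is to obtain \eqref{eq:extinction-pointwise} from a dispersive bound on the torus, and then interpolate against the $X^1$ bound to get \eqref{eq:extinction-z}. The first step is a reduction. By \eqref{lem42-cutoff-bounds}, $\|\mathbf f_N\|_{H^1}\lesssim\|\boldsymbol\phi\|_{\dot H^1}$, and Proposition~\ref{prop-pre-Xs-Hs} together with Remark~\ref{Yembedding} makes $e^{it\Delta}\mathbf f_N\mapsto \mathbf f_N$ uniformly Lipschitz from $H^1$ into $Z$. So, after a small error in $\dot H^1$, we may replace $\boldsymbol\phi$ by a Schwartz function $\boldsymbol\phi'$ with $\widehat{\boldsymbol\phi'}$ compactly supported in $\{R^{-1}\le|\xi|\le R\}$. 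For $N$ large, $Q_N\boldsymbol\phi'$ then differs from $\boldsymbol\phi'$ by $o(1)$ in $\dot H^1$. The second component is propagated by $e^{\frac i2 t\Delta}$, but this changes only constants, so we argue for one component.

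The core estimate is a pointwise bound for $P_K e^{it\Delta}\mathbf f_N$. Write $\mathbf f_N$ as a function on $\mathbb T^6$ and expand $e^{it\Delta}P_K\mathbf f_N(y)=\int_{\mathbb T^6}K_{K,t}(y-z)\mathbf f_N(z)\,dz$. Here the periodic kernel satisfies, by Dirichlet/Weyl sum bounds (as in \cite{IPT3}, Lemma on kernel estimates), $|K_{K,t}(x)|\lesssim |t|^{-3}$ whenever $|t|\le K^{-1}$. Since $\|\mathbf f_N\|_{L^1}\lesssim N^{-4}\|Q_N\boldsymbol\phi'\|_{L^1}\lesssim N^{-4}N^{1/2\cdot 6}$ (the cutoff has radius $N^{1/2}$), this crude bound is too lossy. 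Instead we split into low and high $K$.

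For $K\lesssim N$ and $|t|\ge TN^{-2}$ we use the Euclidean dispersive decay. Because $\widehat{\boldsymbol\phi'}$ is localized and $|t|\le T^{-1}$ is small, the Poisson summation form of the periodic kernel shows that $e^{it\Delta}\mathbf f_N$ agrees, up to $O(N^{-\infty})$-type errors from the other lattice images, with the rescaled Euclidean evolution $N^2(e^{isN^{-2}\cdot}\ldots)$. Concretely, $|e^{it\Delta}\mathbf f_N|\lesssim N^2 (N^2|t|)^{-3}\|\boldsymbol\phi'\|_{L^1}$ plus the images. The images are controlled because the frequencies are $\sim N$ and $|t|\le T^{-1}$: by stationary phase, the waves travel a distance $\lesssim N|t|\le N/T$. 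This step needs care, and it is where smallness of $T^{-1}$ is used. Summing the images for $|t|\le T^{-1}$ gives a factor $1+(N|t|)^{6}\cdot$(decay), which we bound by $T$-dependent smallness via the rational structure, using Weyl-sum kernel bounds at scale $N$ for $|t|\ge N^{-1}$.

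For $K\gg N$ we use the rapid decay of $\widehat{\boldsymbol\phi'}(\xi/N)$. Combining both regimes gives $\sup_K K^{-2}\|P_Ke^{it\Delta}\mathbf f_N\|_{L^\infty}\lesssim_{\phi'} T^{-3}+$(error in $N$), hence the first half of \eqref{eq:extinction-pointwise}. The $L^3$ part then follows by interpolating $L^\infty$ decay against $L^2\lesssim N^{-1}$. More precisely, we use $\|P_K u\|_{L^3}^3\le \|P_Ku\|_{L^\infty}\|P_Ku\|_{L^2}^2$ and sum in $K$ with the weights $K^{-2}\cdot K^2$.

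Finally, for \eqref{eq:extinction-z}, on each unit subinterval we bound $\sum_K K^2\|P_K u\|_{L^3_{t,x}}^3\le \sup_t\sup_K K^{-2}\|P_Ku\|_{L^\infty}\cdot\sum_K K^4\|P_Ku\|^2_{L^2_{t,x}}$. The last factor is $\lesssim\|\mathbf f_N\|_{H^1}^2$ on a unit interval, so the smallness from the previous step transfers directly, and we choose $T$ and then $N_0$ accordingly. The main obstacle is the kernel analysis on $\mathbb T^6$ in the intermediate range $N^{-1}\lesssim|t|\le T^{-1}$. There the Euclidean approximation breaks down, and we must rely on a genuinely periodic bound $|K_{N,t}|\lesssim N^6(q(1+N|t/q-a/q^2|))^{-3}$-type Weyl estimates; we would import that estimate from \cite{IPT3} essentially verbatim, since it is dimension-robust.
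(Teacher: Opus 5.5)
There is a genuine gap in your final step, the passage to \eqref{eq:extinction-z}. The inequality
\[
\sum_K K^2\|P_Ku\|_{L^3_{t,x}(I)}^3\le \sup_{K}K^{-2}\|P_Ku\|_{L^\infty_{t,x}(I)}\cdot\sum_K K^4\|P_Ku\|_{L^2_{t,x}(I)}^2
\]
is correct, but the last factor is not $\lesssim\|\mathbf f_N\|_{H^1}^2$. By unitarity, $\sum_K K^4\|P_Ke^{it\Delta}\mathbf f_N\|_{L^2_{t,x}(I\times\mathbb T^6)}^2\sim |I|\,\|\mathbf f_N\|_{\dot H^2}^2$. Since $\mathbf f_N$ is, up to the cutoff, $N^2\boldsymbol\phi'(N\cdot)$, you have $\|\mathbf f_N\|_{\dot H^2}\sim N\|\boldsymbol\phi'\|_{\dot H^2}$. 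On $I=[TN^{-2},T^{-1}]$ this factor is therefore of size $T^{-1}N^2$. Combined with your $L^\infty$ bound you only get $\|e^{it\Delta}\mathbf f_N\|_{Z(I)}^3\lesssim T^{-4}N^2$, which does not go to zero as $N\to\infty$, and no choice of $T$ and $N_0$ rescues it.

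The underlying issue is that on $\mathbb T^6$ the $L^2_{t,x}$ norm over an interval of length $T^{-1}\gg N^{-2}$ has no dispersive gain. Interpolating between $L^\infty$ and $L^2$ therefore breaks the $\dot H^1$ scaling. The paper instead interpolates the $L^\infty$ bound against the scale-invariant Strichartz estimate of \Cref{prop-pre-Stri} at an exponent $q\in(8/3,3)$; it takes $q=11/4$. From $\|g\|_{L^3}^3\le\|g\|_{L^\infty}^{3-q}\|g\|_{L^q}^q$ one gets
\[
K^2\|P_Ke^{it\Delta}\mathbf f_N\|_{L^3_{t,x}(I)}^3\lesssim\bigl(K^{-2}\|P_Ke^{it\Delta}\mathbf f_N\|_{L^\infty_{t,x}(I)}\bigr)^{3-q}\bigl(K\|P_K\mathbf f_N\|_{L^2}\bigr)^q .
\]
Since $q>2$, the embedding $\ell^2\subset\ell^q$ sums the last factor to $\lesssim\|\mathbf f_N\|_{H^1}^q$. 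This is used only for $K\in[NT^{-1/100},NT^{1/100}]$. The remaining frequencies are handled directly by the $L^3$ Strichartz estimate and the frequency localization of $\mathbf f_N$.

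Your pointwise argument also takes an unnecessary detour that rests on a miscalculation. For Schwartz $\boldsymbol\phi'$ you have $\|Q_N\boldsymbol\phi'\|_{L^1}\le\|\boldsymbol\phi'\|_{L^1}$, so $\|\mathbf f_N\|_{L^1}\lesssim N^{-4}$, not $N^{-1}$. With this, the direct $L^1\to L^\infty$ bound is exactly what works. Dirichlet's lemma combined with Bourgain's one-dimensional Gauss-sum estimate gives a single uniform kernel bound, $\sup_{SM^{-2}\le|t|\le S^{-1}}\|K_M(t)\|_{L^\infty}\lesssim S^{-3}M^6$ for $1\le S\le M$. Indeed, if $a\neq0$ then $q\gtrsim S$; if $a=0$ then $q=1$ and $M|t|^{1/2}\gtrsim S^{1/2}$. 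Taking $S=T$ and $M\sim\max(K,N)$ gives $K^{-2}\|P_Ke^{it\Delta}\mathbf f_N\|_{L^\infty}\lesssim T^{-3}(1+K/N)^6(K/N)^{-2}$ on the whole range $TN^{-2}\le|t|\le T^{-1}$.

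So no Poisson summation, image counting or Euclidean approximation is needed. The ``intermediate range'' you single out as the main obstacle is covered by the same estimate as short times. The fixed-time $L^3_x$ bound then follows from $L^\infty_x$--$L^2_x$ interpolation, as you say, because at fixed time that interpolation does respect the scaling.
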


\begin{proof}[Proof of \Cref{lem:extinction}]
For dyadic $M\ge1$, choose a fixed
$\chi\in C_c^\infty(\mathbb R)$, equal to one on $[-2,2]$,
and define
\[
 K_M(t,x)=\sum_{\xi\in\mathbb Z^6}
 e^{i(x\cdot\xi-t|\xi|^2)}
 \prod_{j=1}^6\chi(\xi_j/M).
\]
The one-dimensional quadratic-sum estimate in
\cite[Lemma~3.18]{Bour1993}, applied in each coordinate, gives
\begin{equation}\label{eq:extinction-rational-kernel}
 |K_M(t,x)|\lesssim
 \left[
 \frac{M}{\sqrt q\bigl(1+M|t/(2\pi)-a/q|^{1/2}\bigr)}
 \right]^6,
\end{equation}
provided that $
 (a,q)=1, 1\le q\le M,
 \left|\frac{t}{2\pi}-\frac aq\right|\le\frac1{Mq}.$
Then we use the following elementary approximation lemma.

\begin{lemma}[Dirichlet's lemma]
\label{lem:dirichlet}
For every $\alpha\in\mathbb R$ and every integer $M\ge1$,
there exist integers $a,q$ such that
\[
 (a,q)=1,\qquad 1\le q\le M,\qquad
 \left|\alpha-\frac aq\right|\le\frac1{Mq}.
\]
\end{lemma}

We claim that, for $1\le S\le M$,
\begin{equation}\label{eq:extinction-kernel}
 \sup_{SM^{-2}\le |t|\le S^{-1}}
 \|K_M(t)\|_{L^\infty}\lesssim S^{-3}M^6.
\end{equation}
Indeed, apply \Cref{lem:dirichlet} to $\alpha=t/(2\pi)$.
If $a\ne0$, then
\[
 \frac1q\le\frac{|a|}{q}
 \le\frac{|t|}{2\pi}+\frac1{Mq}
 \lesssim S^{-1}+M^{-1}\lesssim S^{-1},
\]
so $q\gtrsim S$. If $a=0$, coprimality gives $q=1$, and $
 M\left|\frac{t}{2\pi}\right|^{1/2}
 \gtrsim S^{1/2}$.

In both cases, \eqref{eq:extinction-rational-kernel} gives
\eqref{eq:extinction-kernel}. The same argument, with
$t/(4\pi)$ in place of $t/(2\pi)$, proves this estimate for
$K_M(t/2)$. Thus it applies to both component propagators.

By \eqref{lem42-cutoff-bounds}, the scaling in
\eqref{cutoff}, and \Cref{prop-pre-Stri}, it suffices by
density to consider
$\boldsymbol{\phi}\in C_c^\infty(\mathbb R^6;\mathbb C^2)$, so when $N$ is sufficiently large, $Q_N\boldsymbol{\phi}=\boldsymbol{\phi}$. Changing variables and by Plancherel, we obtain
\begin{equation}\label{eq:extinction-data-bounds}
 \|\textbf{f}_N\|_{L^1}\lesssim N^{-4},
 \qquad
 \|P_K\textbf{f}_N\|_{L^2}
 \lesssim
 N^{-1}(1+K/N)^{-10},
 \qquad
 \|\textbf{f}_N\|_{H^1}\lesssim 1.
\end{equation}
In particular, for every $p>8/3$, \Cref{prop-pre-Stri} gives
\begin{equation}\label{eq:extinction-strichartz}
 \|P_K e^{it\Delta}\textbf{f}_N\|_
 {L^p_{t,x}([-1,1]\times\mathbb T^6)}
 \lesssim
 K^{3-8/p}N^{-1}(1+K/N)^{-10}.
\end{equation}

Fix $2\le T\le N$,
we split the sum into $
 K\in[NT^{-\frac1{100}},NT^{\frac1{100}}]$ and $
 K\notin[NT^{-\frac1{100}},NT^{\frac1{100}}]$.
For $p=3$, \eqref{eq:extinction-strichartz}
and summation give
\begin{equation}\label{eq:extinction-frequency-tail}
 \sum_{K\notin[NT^{-\frac1{100}},NT^{\frac1{100}}]}
 K^{2}
 \|P_K e^{it\Delta}\textbf{f}_N\|_
 {L^3_{t,x}([TN^{-2}, T^{-1}]\times\mathbb T^6)}^3
 \lesssim T^{-\frac{3}{100}}.
\end{equation}

For $NT^{-\frac1{100}}\le K\le NT^{\frac1{100}}$, choose a dyadic
$M\sim\max(K,N)$.
Since $M\ge N\ge T$, every $t\in [TN^{-2}, T^{-1}]$ satisfies
$TM^{-2}\le |t|\le T^{-1}$.
Applying \eqref{eq:extinction-kernel} with $S=T$ and using
the uniform $L^1$ boundedness of $P_K$, we obtain
\begin{align*}
 \|P_K e^{it\Delta}\textbf{f}_N\|_
 {L^\infty_{t,x}([TN^{-2}, T^{-1}]\times\mathbb T^6)}
 \lesssim T^{-3}M^6\|P_K\textbf{f}_N\|_{L^1}
 \lesssim
 T^{-3}(K+N)^6N^{-4}.
\end{align*}
So this implies
\begin{equation}\label{eq:extinction-middle-linfty}
 K^{-2}\|P_K e^{it\Delta}\textbf{f}_N\|_
 {L^\infty_{t,x}([TN^{-2}, T^{-1}]\times\mathbb T^6)}
 \lesssim
 T^{-3}(1+K/N)^6(K/N)^{-2}
 \lesssim T^{-2}.
\end{equation}

Take $q=11/4$, so that $8/3<q<3$.
Interpolating \eqref{eq:extinction-middle-linfty} with
\Cref{prop-pre-Stri} at exponent $q$, we find for these middle frequencies,
\begin{align*}
 K^{2}\|P_K e^{it\Delta}\textbf{f}_N\|_
 {L^3_{t,x}([TN^{-2}, T^{-1}]\times\mathbb T^6)}^3
 \lesssim
 T^{-2(3-q)}
 \bigl(K\|P_K\textbf{f}_N\|_{L^2}\bigr)^q.
\end{align*}
Since $q>2$, the embedding $\ell^2\hookrightarrow\ell^q$
and \eqref{eq:extinction-data-bounds} give
\begin{equation}\label{eq:extinction-middle-sum}
\begin{aligned}
 &\quad\left(
 \sum_{NT^{-\frac1{100}}\le K\le NT^{\frac1{100}}}
 K^{2}\|P_K e^{it\Delta}\textbf{f}_N\|_
 {L^p_{t,x}([TN^{-2}, T^{-1}]\times\mathbb T^6)}^3
 \right)^{1/3}\\
 &\quad\lesssim 
 T^{-2(1-q/3)}
 \left(\sum_K
 \bigl(K\|P_K\textbf{f}_N\|_{L^2}\bigr)^q
 \right)^{1/3} \lesssim 
 T^{-2(1-q/3)}\|\textbf{f}_N\|_{H^1}^{q/3}
 \lesssim T^{-1/6}.
\end{aligned}
\end{equation}
Combining \eqref{eq:extinction-frequency-tail} and
\eqref{eq:extinction-middle-sum}, and using
\eqref{pre:Zformula}, we conclude that
\[
 \|e^{it\Delta}\textbf{f}_N\|_{Z([TN^{-2},T^{-1}])}
 \lesssim T^{-\frac1{100}}.
\]
This proves \eqref{eq:extinction-z} for smooth compactly
supported data.

We now prove \eqref{eq:extinction-pointwise}, still assuming
$\boldsymbol{\phi}\in C_c^\infty(\mathbb R^6;\mathbb C^2)$.
For $K\notin[NT^{-\frac1{100}},NT^{\frac1{100}}]$, Bernstein's inequality,
unitarity, and \eqref{eq:extinction-data-bounds} give
\[
 \sup_{t\in\mathbb R}
 K^{-2}\|P_K e^{it\Delta}\textbf{f}_N\|_{L^\infty}
 \lesssim K\|P_K\textbf{f}_N\|_{L^2}
 \lesssim
 \frac KN\left(1+\frac KN\right)^{-10}
 \lesssim T^{-\frac1{100}}.
\]
Together with \eqref{eq:extinction-middle-linfty}, this yields
\begin{equation}\label{eq:extinction-besov}
 \sup_{\substack{t\in [TN^{-2}, T^{-1}]\\K\ge1}}
 K^{-2}\|P_K e^{it\Delta}\textbf{f}_N\|_{L^\infty}
 \lesssim T^{-\frac1{100}}.
\end{equation}

For the $L^3$ estimate, we use the same frequency decomposition.
By Bernstein's inequality and
\eqref{eq:extinction-data-bounds},
\[
\begin{aligned}
 &\quad\sup_{t\in [TN^{-2}, T^{-1}]}
 \sum_{K\notin[NT^{-\frac1{100}},NT^{\frac1{100}}]}
 \|P_K e^{it\Delta}\textbf{f}_N\|_{L^3}\\&\lesssim
 \sum_{K\notin[NT^{-\frac1{100}},NT^{\frac1{100}}]}
 K\|P_K\textbf{f}_N\|_{L^2}
 \lesssim 
 \sum_{K<NT^{-\frac1{100}}}\frac KN
 +\sum_{K>NT^{\frac1{100}}}\left(\frac KN\right)^{-9}\lesssim T^{-\frac1{100}}.
\end{aligned}
\]
For $NT^{-\frac1{100}}\le K\le NT^{\frac1{100}}$, interpolation between
$L^2$ and $L^\infty$, followed by
\eqref{eq:extinction-middle-linfty} and
\eqref{eq:extinction-data-bounds}, gives
\[
\begin{aligned}
 \sup_{t\in [TN^{-2}, T^{-1}]}
 \|P_K e^{it\Delta}\textbf{f}_N\|_{L^3}
 &\le
 \left(
 \sup_{t\in [TN^{-2}, T^{-1}]}
 \|P_K e^{it\Delta}\textbf{f}_N\|_{L^\infty}
 \right)^{1/3}
 \|P_K\textbf{f}_N\|_{L^2}^{2/3}\\
 &\lesssim T^{-2/3}
 \left(\frac KN\right)^{2/3}
 \left(1+\frac KN\right)^{-20/3}.
\end{aligned}
\]
Consequently,
\[
\begin{aligned}
 \sup_{t\in [TN^{-2}, T^{-1}]}
 \sum_{NT^{-\frac1{100}}\le K\le NT^{\frac1{100}}}
 \|P_K e^{it\Delta}\textbf{f}_N\|_{L^3}
 &\lesssim 
 T^{-2/3}
 \left[
 \sum_{K\le N}\left(\frac KN\right)^{2/3}
 +\sum_{K>N}\left(\frac KN\right)^{-6}
 \right]\\
 &\lesssim T^{-2/3}.
\end{aligned}
\]
Combining the preceding estimates and using the triangle
inequality, we obtain
\[
 \sup_{t\in [TN^{-2}, T^{-1}]}
 \|e^{it\Delta}\textbf{f}_N\|_{L^3}
 \lesssim
 T^{-\frac1{100}}+T^{-2/3}
 \lesssim T^{-\frac1{100}}.
\]
Together with \eqref{eq:extinction-besov}, this proves
\eqref{eq:extinction-pointwise}.

\end{proof}
\begin{definition}
\label{def:euclidean-frames}
For $\textbf{f}\in L^2(\mathbb T^6;\mathbb C^2)$,
$t_0\in\mathbb R$, and $x_0\in\mathbb T^6$, define
\[
 (\pi_{x_0}\textbf{f})(y)=\textbf{f}(y-x_0),
 \qquad
 \Pi_{t_0,x_0}\textbf{f}
 =\pi_{x_0}e^{-it_0\Delta}\textbf{f}.
\]
For $\boldsymbol{\phi}\in\dot H^1(\mathbb R^6;\mathbb C^2)$
and $N\ge1$, let
 $T_N\boldsymbol{\phi}=\textbf{f}_N$,
where $\textbf{f}_N$ is defined by \eqref{cutoff}.
By scaling and \eqref{lem42-cutoff-bounds},
\begin{equation}\label{Zembedding}
       \|T_N\boldsymbol{\phi}\|_{H^1}
 \lesssim\|\boldsymbol{\phi}\|_{\dot H^1},
 \qquad
 \|T_N\boldsymbol{\phi}\|_{L^2}
 \lesssim N^{-1/2}\|\boldsymbol{\phi}\|_{\dot H^1}.
\end{equation}

A renormalized Euclidean frame is a sequence
$\mathcal O=(N_k,t_k,x_k)_{k\ge1}$ such that $
 N_k\longrightarrow\infty,
 t_k\longrightarrow0,
 x_k\in\mathbb T^6$,
and either $t_k=0$ for every $k$, or
$N_k^2|t_k|\to\infty$.
We denote the collection of these frames by
$\widetilde{\mathcal F}_e$.
\end{definition}

\begin{proposition}[Euclidean profiles]
\label{prop:euclidean-profiles}
Let $\mathcal O=(N_k,t_k,x_k)_{k\ge1}
\in\widetilde{\mathcal F}_e$, and let
$\boldsymbol{\phi}\in\dot H^1(\mathbb R^6;\mathbb C^2)$.

If $t_k=0$ for every $k$, assume
\[
 E(\boldsymbol{\phi})<E(\textbf{W}),
 \qquad
 H(\boldsymbol{\phi})<H(\textbf{W}).
\]
If $N_k^2|t_k|\to\infty$, assume instead
\[
 H(\boldsymbol{\phi})<E(\textbf{W}).
\]
After passing to a subsequence in the second case,
we may assume that $N_k^2t_k\to+\infty$ or
$N_k^2t_k\to-\infty$.

Let $\textbf{u}$ be the global Euclidean scattering solution
given by \Cref{thm:euclidean-scattering}, with scattering
states $\boldsymbol{\phi}^{\pm\infty}$, determined by
\[
 \begin{cases}
 \textbf{u}(0)=\boldsymbol{\phi},
     & t_k=0\ \text{for every }k,\\
 \boldsymbol{\phi}^{-\infty}=\boldsymbol{\phi},
     & N_k^2t_k\to+\infty,\\
 \boldsymbol{\phi}^{+\infty}=\boldsymbol{\phi},
     & N_k^2t_k\to-\infty.
 \end{cases}
\]
Then the following assertions hold.

\begin{enumerate}
\item
There exists $\tau=\tau(\boldsymbol{\phi})>0$ such that,
for all sufficiently large $k$, the periodic system has a
unique strong solution $\textbf{U}_k$ on $[-\tau,\tau]$
satisfying
\[
 \textbf{U}_k(0)
 =\Pi_{t_k,x_k}T_{N_k}\boldsymbol{\phi},
\]
and
\begin{equation}\label{eq:euclidean-profile-bound}
 \|\textbf{U}_k\|_{X^1([-\tau,\tau])}
 \le C_{\boldsymbol{\phi}}.
\end{equation}

\item
For every $\varepsilon>0$, one can choose a smooth global
Euclidean solution $\textbf{u}'$, with Schwartz initial
data and scattering states
$\boldsymbol{\phi}'^{\,\pm\infty}$, such that
\[
 \sup_{s\in\mathbb R}
 \|\textbf{u}'(s)-\textbf{u}(s)\|_{\dot H^1}
 +\sum_{\pm}
 \|\boldsymbol{\phi}'^{\,\pm\infty}
       -\boldsymbol{\phi}^{\pm\infty}\|_{\dot H^1}
 \le\varepsilon,
\]
and a number
$T_0=T_0(\boldsymbol{\phi},\varepsilon)\ge \tau^{-1}$
with the following property.

For every fixed $T\ge T_0$, there exists
$R_0=R_0(\boldsymbol{\phi},\varepsilon,T)$ such that,
for every fixed $R\ge R_0$ and all sufficiently large $k$,
\begin{equation}\label{eq:euclidean-profile-core}
 \|\textbf{U}_k-\widetilde{\textbf{u}}_k\|_
 {X^1(\{|t-t_k|\le TN_k^{-2}\}
             \cap\{|t|\le T^{-1}\})}
 \le\varepsilon,
\end{equation}
where
\[
 (\pi_{-x_k}\widetilde{\textbf{u}}_k)(t,y)
 =
 N_k^2\eta\!\left(\frac{N_k\Psi^{-1}(y)}R\right)
 \textbf{u}'\!
 \left(N_k^2(t-t_k),N_k\Psi^{-1}(y)\right).
\]
Moreover, 
\begin{equation}\label{eq:euclidean-profile-tail}
 \|\textbf{U}_k
   -\Pi_{t_k-t,x_k}T_{N_k}\boldsymbol{\phi}^{\pm\infty}\|_
 {X^1(\{\pm(t-t_k)\ge TN_k^{-2}\}
             \cap\{|t|\le T^{-1}\})}
 \le\varepsilon.
\end{equation}
\end{enumerate}
\end{proposition}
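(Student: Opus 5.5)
By translation invariance of \eqref{eq:system1} we may take $x_k=0$. The plan follows Ionescu--Pausader: reduce everything to \Cref{lem:euclidean-approximation} on the Euclidean window $|t-t_k|\le TN_k^{-2}$, to \Cref{lem:extinction} outside that window, and to \Cref{prop:stability-h1} to glue the two regimes together.

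\emph{Case $t_k=0$.} Fix $\varepsilon$ and pick $\boldsymbol{\phi}'\in\mathcal S$ with $\|\boldsymbol{\phi}-\boldsymbol{\phi}'\|_{\dot H^1}\le\varepsilon_1\ll\varepsilon$. Stability in $\dot H^1(\mathbb R^6)$ for the Euclidean problem, using the bounds \eqref{eq:euclidean-spacetime}, gives a solution $\mathbf u'$ whose orbit and scattering states are $\varepsilon$-close to those of $\mathbf u$. Choose $T_0$ so large that $\|e^{it\Delta_{\mathbb R^6}}\boldsymbol{\phi}'^{\pm\infty}-\mathbf u'(t)\|_{\dot H^1}$ is small for $|t|\ge T_0$, and so that \Cref{lem:extinction} applied to $\boldsymbol{\phi}^{\pm\infty}$ gives $Z$-norm at most $\delta$ on $[TN^{-2},T^{-1}]$.

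Applying \Cref{lem:euclidean-approximation} with this $T$ on $I_{N_k}=[-TN_k^{-2},TN_k^{-2}]$ yields $\mathbf U_k$ together with the approximation \eqref{eq:euclidean-profile-core}, after a suitable choice of $R_0$.

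At $t=\pm TN_k^{-2}$ we then have
\[
\|\mathbf U_k(\pm TN_k^{-2})-\Pi_{-\pm TN_k^{-2},0}T_{N_k}\boldsymbol{\phi}^{\pm\infty}\|_{H^1}\lesssim\varepsilon.
\]
This follows by scaling the Euclidean statement $\mathbf u'(\pm T)\approx e^{\pm iT\Delta}\boldsymbol{\phi}'^{\pm\infty}$ to the torus. The transfer requires checking that $e^{it\Delta_{\mathbb T^6}}T_N\boldsymbol{\psi}$ and the rescaled Euclidean flow agree up to $o(1)$ in $H^1$ for $|t|\le TN^{-2}$, which holds by finite speed on Schwartz truncations.

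On $[TN_k^{-2},T^{-1}]$ we apply \Cref{prop:stability-h1} with approximate solution $\widetilde{\mathbf u}=e^{i(t-t_k)\Delta}T_{N_k}\boldsymbol{\phi}^{+\infty}$ and error $\mathbf e=F(\widetilde{\mathbf u})$. By \Cref{lem:nonlinear-estimates},
\[
\|\mathbf e\|_{N^1}\lesssim\|\widetilde{\mathbf u}\|_{Z'}^2\lesssim \|\widetilde{\mathbf u}\|_Z\|\widetilde{\mathbf u}\|_{X^1}\lesssim\delta\,\|\boldsymbol{\phi}\|_{\dot H^1}.
\]
This gives existence up to time $T^{-1}$ and the bound \eqref{eq:euclidean-profile-tail}. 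The negative side is identical. Setting $\tau=T_0^{-1}$, the uniform bound \eqref{eq:euclidean-profile-bound} follows by summing the $X^1$ bounds on the at most three pieces. Since $T_0$ depends on $\varepsilon$, we fix $\tau$ using one fixed $\varepsilon=\varepsilon(\boldsymbol{\phi})$ small enough for the stability lemma.

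\emph{Case $N_k^2t_k\to+\infty$.} Here $\mathbf u$ is the wave operator solution with $\boldsymbol{\phi}^{-\infty}=\boldsymbol{\phi}$; it exists and scatters by \Cref{thm:euclidean-scattering}. The assumption $H(\boldsymbol{\phi})<E(\mathbf W)$ ensures that $\mathbf u(t)$ satisfies the hypotheses of that theorem for $t$ near $-\infty$. Starting at $t=0$, the datum $\Pi_{t_k,0}T_{N_k}\boldsymbol{\phi}=e^{-it_k\Delta}T_{N_k}\boldsymbol{\phi}$ is the linear flow evaluated at $t=0$, which lies at the far left of the window centered at $t_k$. Since $t_k\to0$ and $N_k^2t_k\to\infty$, we have $0\in\{t-t_k\le -TN_k^{-2}\}\cap\{|t|\le T^{-1}\}$ for large $k$.

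First, run the stability argument as above on $[0,t_k-TN_k^{-2}]$, using extinction for the linear profile; this gives \eqref{eq:euclidean-profile-tail} on that side. Next, at $t_k-TN_k^{-2}$, compare with $\widetilde{\mathbf u}_k$ using the transfer step, and then apply \Cref{lem:euclidean-approximation}, shifted by $t_k$, combined with stability. Finally, continue past $t_k+TN_k^{-2}$ with $\boldsymbol{\phi}^{+\infty}$. For negative times $t\in[-\tau,0]$, only the linear regime is involved.

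\emph{Main obstacle.} I expect the hardest step to be the transfer: the $H^1$-closeness at the window endpoints between the rescaled truncated Euclidean evolution and the periodic linear flow of $T_N\boldsymbol{\psi}^{\pm}$, together with uniformity of all constants in the order $\varepsilon\to T\to R\to k$. I would handle it by first taking Schwartz approximants, then localizing with $\eta(\cdot/R)$ and controlling the truncation errors by the same scaling identities \eqref{lem42-error-scaling}, viewing the linear Euclidean flow as an approximate periodic solution with error $o_R(1)$ in $L^1H^1$.
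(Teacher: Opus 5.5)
Your proposal is correct, and for $t_k=0$ it is the paper's argument. Both use \Cref{lem:euclidean-approximation} on $|t|\le TN_k^{-2}$, an $H^1$ comparison with $e^{\pm iTN_k^{-2}\Delta}T_{N_k}\boldsymbol\phi^{\pm\infty}$ at the window endpoints, and a perturbative step outside the window driven by \Cref{lem:extinction}. The only difference there is that the paper does the exterior step with the small-$Z'$ local theory of \Cref{prop:local-theory-h1}(1), not with \Cref{prop:stability-h1} and the linear profile as approximate solution; the two choices are interchangeable. The endpoint transfer you single out is genuinely needed, and the paper absorbs it into the $o_k(1)$ term of its endpoint estimate. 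Your justification is the right one: treat the truncated, rescaled Euclidean linear flow as an approximate periodic solution, with $L^1H^1$ error controlled through \eqref{lem42-error-scaling}. However, it rests on spatial decay of Schwartz solutions over the bounded rescaled time $|s|\le T$, not on finite speed of propagation, which Schr\"odinger does not have.

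The real difference is the case $N_k^2t_k\to+\infty$. Instead of your three-piece gluing, the paper applies the already proved case $t_k=0$ to the datum $\mathbf u(0)$. This is admissible by Euclidean energy trapping, a point you should make explicit as well, since \Cref{lem:euclidean-approximation} is stated under the threshold hypotheses at the window center, not near $s=-\infty$. The paper then reads off $\mathbf V_k(-t_k)\approx e^{-it_k\Delta}T_{N_k}\boldsymbol\phi$ from the tail estimate of that case. A single application of \Cref{prop:stability-h1} on $[-\tau,\tau]$ then gives $\mathbf U_k\approx\mathbf V_k(\cdot-t_k)$, so the bound, core and tail estimates are all inherited by time translation. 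Both routes use the same ingredients. The paper's reduction avoids repeating the endpoint transfer and the region-by-region bookkeeping. Yours keeps the order of choices $\varepsilon\to T\to R\to k$ explicit in each region.
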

\begin{proof}
By translation, assume that $x_k=0$.

\noindent
First suppose that $t_k=0$.
By \Cref{thm:euclidean-scattering} and
\Cref{lem:extinction}, choose $\textbf{u}'$ and then $T_0$
sufficiently large so that
\[
\begin{aligned}
 &\sup_s\|\textbf{u}'(s)-\textbf{u}(s)\|_{\dot H^1}
 +\sum_\pm
 \|\boldsymbol{\phi}'^{\,\pm\infty}
       -\boldsymbol{\phi}^{\pm\infty}\|_{\dot H^1}
 \le\delta,\\
 &\sum_\pm
 \|\textbf{u}(\pm T_0)
       -e^{\pm iT\Delta_{\mathbb R^6}}
          \boldsymbol{\phi}^{\pm\infty}\|_{\dot H^1}
 \le\delta,\\
 &\limsup_{k\to\infty}\sum_\pm
 \|e^{it\Delta}T_{N_k}\boldsymbol{\phi}^{\pm\infty}\|_
 {Z(\{T_0N_k^{-2}\le\pm t\le T_0^{-1}\})}
 \le\delta.
\end{aligned}
\]
Here $\textbf{u}'(0)$ is Schwartz and $\textbf{u}(0)=\boldsymbol{\phi}$.

For fixed $T_0$, \Cref{lem:euclidean-approximation} gives,
for $R_0$ and then $k$ sufficiently large,
\begin{equation}
       \|\textbf{U}_k\|_{X^1([-T_0 N_k^{-2},T_0 N_k^{-2}])}
 \le C_{\boldsymbol{\phi}},\label{gl}
\end{equation}
\begin{equation}
       \|\textbf{U}_k-\widetilde{\textbf{u}}_k\|_
 {X^1([-T_0 N_k^{-2},T_0 N_k^{-2}])}
 \le C_{\boldsymbol{\phi}}\delta.\label{ap}
\end{equation}
By \eqref{Zembedding} and increasing $R_0$ large enough such that
\[
\begin{aligned}
 &\quad \|\textbf{U}_k(\pm T_0 N_k^{-2})
       -e^{\pm iT_0 N_k^{-2}\Delta}
          T_{N_k}\boldsymbol{\phi}^{\pm\infty}\|_{H^1}\\
 &\lesssim
 \|\textbf{U}_k-\widetilde{\textbf{u}}_k\|_
      {X^1([-T_0 N_k^{-2},T_0 N_k^{-2}])}
 +\|(1-\eta(\cdot/R))\textbf{u}'(\pm T_0)\|_{\dot H^1}\\
 &\quad +\|\textbf{u}'(\pm T_0)-\textbf{u}(\pm T_0)\|_{\dot H^1}
 +\|\textbf{u}(\pm T_0)
       -e^{\pm iT_0\Delta_{\mathbb R^6}}
          \boldsymbol{\phi}^{\pm\infty}\|_{\dot H^1}
 +o_k(1)\\
 &\le C_{\boldsymbol{\phi}}\delta+o_k(1).
\end{aligned}
\]

By \Cref{prop-pre-Xs-Hs},
\[
\begin{aligned}
 &\quad\left\|
 e^{i(t\mp T_0 N_k^{-2})\Delta}
       \textbf{U}_k(\pm T_0 N_k^{-2})
 -
 e^{it\Delta}T_{N_k}\boldsymbol{\phi}^{\pm\infty}
 \right\|_{X^1([-T_0^{-1},T_0^{-1}])}\\
 &\le
 \|\textbf{U}_k(\pm TN_k^{-2})
       -e^{\pm iTN_k^{-2}\Delta}
          T_{N_k}\boldsymbol{\phi}^{\pm\infty}\|_{H^1}
 \le C_{\boldsymbol{\phi}}\delta+o_k(1).
\end{aligned}
\]
Consequently, \eqref{eq:z-prime-norm} and extinction lemma give
\[
 \left\|
 e^{i(t\mp T_0 N_k^{-2})\Delta}
       \textbf{U}_k(\pm T_0 N_k^{-2})
 \right\|_{Z'(\{T_0 N_k^{-2}\le\pm t\le T_0^{-1}\})}
 \le C_{\boldsymbol{\phi}}\delta^{1/2}+o_k(1).
\]
Thus \Cref{prop:local-theory-h1} and
\eqref{eq:local-h1-correction} imply
\begin{equation}\
       \|\textbf{U}_k\|_{X^1([-T_0^{-1},T_0^{-1}])}
 \le C_{\boldsymbol{\phi}},\label{gl2}
\end{equation}
\begin{equation}
       \|\textbf{U}_k-e^{it\Delta}
        T_{N_k}\boldsymbol{\phi}^{\pm\infty}\|_
 {X^1(\{T_0 N_k^{-2}\le\pm t\le T_0 ^{-1}\})}
 \lesssim_{\boldsymbol{\phi}}
 (\delta^{1/2})^2+\delta+o_k(1).\label{ap2}
\end{equation}
For arbitrary $\varepsilon>0$, choose $
 C_{\boldsymbol{\phi}}\delta<\varepsilon/2$,
 $T\ge T_0\ge\tau^{-1}$, 
 $R\ge R_0(\boldsymbol{\phi},\delta,T)$
and then $k$ sufficiently large.
So \eqref{gl2}, \eqref{ap} and \eqref{ap2}  give \eqref{eq:euclidean-profile-bound},
\eqref{eq:euclidean-profile-core} and
\eqref{eq:euclidean-profile-tail}.

\smallskip
\noindent
Now suppose that $N_k^2t_k\to+\infty$.
The Euclidean wave operator gives
\[
 \|\textbf{u}(s)
       -e^{is\Delta_{\mathbb R^6}}\boldsymbol{\phi}\|_{\dot H^1}
 \longrightarrow0
 \qquad(s\to-\infty),
\]
and hence
\[
 E(\textbf{u})=H(\boldsymbol{\phi})<E(\textbf{W}),
 \qquad
 H(\textbf{u}(s))\longrightarrow H(\boldsymbol{\phi})
       <H(\textbf{W}).
\]
By \Cref{thm:euclidean-scattering} and energy trapping,
$\textbf{u}$ extends globally and satisfies the threshold
conditions. The first case, applied to $\textbf{u}(0)$, gives $
 \textbf{V}_k(0)=T_{N_k}\textbf{u}(0)$ and 
 $\|\textbf{V}_k\|_{X^1([-2\tau,2\tau])}
 \le C_{\boldsymbol{\phi}}.$

For every fixed $T$ and all sufficiently large $k$, $TN_k^{-2}\le t_k\le T^{-1}$, so \eqref{ap2} yields
\[
\limsup_{k\to\infty}
 \|\textbf{V}_k(-t_k)
       -e^{-it_k\Delta}T_{N_k}\boldsymbol{\phi}\|_{H^1} \lesssim
 \limsup_{k\to\infty}
 \|\textbf{V}_k-e^{it\Delta}T_{N_k}\boldsymbol{\phi}\|_
 {X^1([-T^{-1},-TN_k^{-2}])}
 \le\varepsilon.
\]
Since $\varepsilon$ is arbitrary,
\Cref{prop:stability-h1} gives
\[
 \|\textbf{U}_k-\textbf{V}_k(\,\cdot-t_k)\|_
 {X^1([-\tau,\tau])}\longrightarrow0,
 \qquad
 \|\textbf{U}_k\|_{X^1([-\tau,\tau])}
 \le C_{\boldsymbol{\phi}}.
\]
We notice that $
 \{|t|\le T^{-1},\ \pm(t-t_k)\ge TN_k^{-2}\}
 \subset
 \{|t-t_k|\le2T^{-1},\
       \pm(t-t_k)\ge(T/2)N_k^{-2}\}.$
So the first case gives \eqref{eq:euclidean-profile-core} and
\eqref{eq:euclidean-profile-tail}.
The case $N_k^2t_k\to-\infty$ follows by using the
forward wave operator. Translating back completes the proof.
\end{proof}

\begin{corollary}[Decomposition of the nonlinear Euclidean profiles]
\label{euclidean}
Let $\textbf{U}_k$ be as in \Cref{prop:euclidean-profiles}.
For every $\theta>0$, there exist $T_\theta,R_\theta\ge1$
such that, for all sufficiently large $k$, on
$I_\theta=(-T_\theta^{-1},T_\theta^{-1})$,
\begin{equation}\label{omega-decomposition}
 \textbf{U}_k
 =\boldsymbol{\omega}_k^{\theta,-\infty}
  +\boldsymbol{\omega}_k^{\theta,+\infty}
  +\boldsymbol{\omega}_k^\theta
  +\boldsymbol{\rho}_k^\theta,
\end{equation}
where
\begin{equation}\label{omega-bounds}
\begin{aligned}
 &\sum_\pm
 \|\boldsymbol{\omega}_k^{\theta,\pm\infty}\|_{Z'(I_\theta)}
 +\|\boldsymbol{\rho}_k^\theta\|_{X^1(I_\theta)}
 \le\theta,\\
 &\sum_\pm
 \|\boldsymbol{\omega}_k^{\theta,\pm\infty}\|_{X^1(I_\theta)}
 +\|\boldsymbol{\omega}_k^\theta\|_{X^1(I_\theta)}
 \le C_{\boldsymbol{\phi}},\\
 &\boldsymbol{\omega}_k^{\theta,\pm\infty}
 =P_{\le R_\theta N_k}
       \boldsymbol{\omega}_k^{\theta,\pm\infty},\\
 &|\nabla_x^m\boldsymbol{\omega}_k^\theta|
 +N_k^{-2}|\partial_t\nabla_x^m
                    \boldsymbol{\omega}_k^\theta|
 \le R_\theta N_k^{|m|+2}\mathbf{1}_{S_k^\theta},
 \qquad 0\le |m|\le10,
\end{aligned}
\end{equation}
and
\[
 S_k^\theta
 =\left\{(t,x)\in I_\theta\times\mathbb T^6:
 |t-t_k|\le T_\theta N_k^{-2},\
 |x-x_k|\le R_\theta N_k^{-1}\right\}.
\]
\end{corollary}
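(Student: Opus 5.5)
The decomposition will come directly from \Cref{prop:euclidean-profiles}(2), applied with a small parameter $\varepsilon=\varepsilon(\theta)$ chosen at the end. Fix $\varepsilon$, take the smooth Euclidean solution $\textbf{u}'$ with Schwartz data and scattering states $\boldsymbol{\phi}'^{\pm\infty}$, and fix $T\ge T_0(\boldsymbol{\phi},\varepsilon)$ and $R\ge R_0$. Set $T_\theta=T$ and split $I_\theta$ into three pieces: the core $\{|t-t_k|\le TN_k^{-2}\}$ and the two tails $\{\pm(t-t_k)\ge TN_k^{-2}\}$. The core part of $\textbf{U}_k$ is $\boldsymbol{\omega}_k^\theta:=\mathbf 1_{\{|t-t_k|\le TN_k^{-2}\}}\widetilde{\textbf{u}}_k$. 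The tail parts are built from the linear evolutions $\Pi_{t_k-t,x_k}T_{N_k}\boldsymbol{\phi}^{\pm\infty}$, frequency-truncated: $\boldsymbol{\omega}_k^{\theta,\pm\infty}:=\mathbf 1_{\{\pm(t-t_k)\ge TN_k^{-2}\}}P_{\le R_\theta N_k}\Pi_{t_k-t,x_k}T_{N_k}\boldsymbol{\phi}'^{\pm\infty}$. The remainder $\boldsymbol{\rho}_k^\theta$ is simply the difference $\textbf{U}_k-\boldsymbol{\omega}_k^{\theta,-\infty}-\boldsymbol{\omega}_k^{\theta,+\infty}-\boldsymbol{\omega}_k^\theta$.

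\textbf{Bounds on the remainder.} We get $\|\boldsymbol{\rho}_k^\theta\|_{X^1}\lesssim\theta$ from \eqref{eq:euclidean-profile-core} and \eqref{eq:euclidean-profile-tail}, together with the following facts.
\begin{itemize}
\item Sharp time cutoffs to intervals are bounded on $X^1$ (the restriction-norm structure of $U^2$ gives this, at the cost of a fixed constant).
\item $\|T_{N_k}(\boldsymbol{\phi}^{\pm\infty}-\boldsymbol{\phi}'^{\pm\infty})\|_{H^1}\lesssim\varepsilon$ by \eqref{Zembedding}, combined with \Cref{prop-pre-Xs-Hs}.
\item $\|P_{>R_\theta N_k}T_{N_k}\boldsymbol{\phi}'^{\pm\infty}\|_{H^1}\to0$ as $R_\theta\to\infty$, uniformly in $k$. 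This follows from scaling, since the profile is Schwartz, so $\|P_{>R}\boldsymbol{\phi}'\|_{\dot H^1}\to0$; the cutoff $Q_N$ is harmless.
\end{itemize}

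\textbf{Bounds on the pieces.} The $X^1$ bounds by $C_{\boldsymbol{\phi}}$ follow from \eqref{eq:euclidean-profile-bound}, \eqref{Zembedding} and these same estimates. The pointwise bound on $\boldsymbol{\omega}_k^\theta$ follows from its explicit formula. On the core time window $|N_k^2(t-t_k)|\le T$, the Schwartz seminorms of $\textbf{u}'$ are bounded by \Cref{thm:euclidean-scattering}. The spatial support of $\eta(N_k\Psi^{-1}(y)/R)$ is $|y-x_k|\le 2RN_k^{-1}$. Each $x$-derivative costs a factor $N_k$ and each $t$-derivative a factor $N_k^2$, which gives $|\nabla^m\boldsymbol{\omega}_k^\theta|\le C(\textbf{u}',T,R)N_k^{|m|+2}$. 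Choosing $R_\theta\ge\max(2R,C(\textbf{u}',T,R))$ yields the stated bound and support in $S_k^\theta$.

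\textbf{Main obstacle: $Z'$-smallness of the tails.} Since $\|\cdot\|_{Z'}=\|\cdot\|_Z^{1/2}\|\cdot\|_{X^1}^{1/2}$ and the $X^1$ norm is bounded by $C_{\boldsymbol{\phi}}$, it suffices to make the $Z$ norm small. That is exactly the extinction lemma \Cref{lem:extinction}, applied to $\boldsymbol{\phi}'^{\pm\infty}$ on $\{TN_k^{-2}\le\pm(t-t_k)\le T^{-1}\}$ after translating in $x$ and $t$. Two points need care. First, the lemma is stated for windows centered at $0$, while here the windows are centered at $t_k$ and the actual range is $\{|t|\le T^{-1}\}$. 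Since $t_k\to0$, the needed window sits inside $\{TN_k^{-2}\le\pm(t-t_k)\le 2T^{-1}\}$, as in the last step of the proof of \Cref{prop:euclidean-profiles}; apply the lemma with $T/2$. Second, $P_{\le R_\theta N_k}$ commutes with the flow and is bounded on each $L^3$ Littlewood--Paley piece, so it does not spoil the $Z$ bound. Taking $T$ large therefore gives $Z$ norm $\le(\theta/C_{\boldsymbol{\phi}})^2$. Finally fix the parameters in this order: $\varepsilon$, then $T$, then $R$, then $R_\theta$, then $k$ large.
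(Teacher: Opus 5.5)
Your proposal is correct and follows the paper's proof closely. The core piece is $\widetilde{\textbf{u}}_k$ cut off to $S_k^\theta$, and the tails are frequency-truncated linear evolutions of the scattering states. The remainder is controlled by \eqref{eq:euclidean-profile-core} and \eqref{eq:euclidean-profile-tail}. The $Z'$-smallness of the tails comes from \Cref{lem:extinction} together with $\|\cdot\|_{Z'}=\|\cdot\|_Z^{1/2}\|\cdot\|_{X^1}^{1/2}$. You are in fact more explicit than the paper about the frequency-truncation error, the shift of the time windows by $t_k$, and the sharp time cutoffs.

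One cosmetic point: if $P_{\le M}$ is a smooth multiplier, it is not idempotent, so your definition does not literally give $\boldsymbol{\omega}_k^{\theta,\pm\infty}=P_{\le R_\theta N_k}\boldsymbol{\omega}_k^{\theta,\pm\infty}$. Truncate at a smaller scale instead. The paper uses $P_{\le LN_k}$ and chooses $R_\theta$ so that $P_{\le R_\theta N_k}P_{\le LN_k}=P_{\le LN_k}$, which makes that identity hold exactly.
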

\begin{proof}
By \Cref{prop:euclidean-profiles}, for any
$\varepsilon>0$, all sufficiently large $T$, and then all
sufficiently large $R$, we have
\[
 \|\textbf{U}_k-\widetilde{\textbf{u}}_k\|_
 {X^1(\{|t-t_k|\le TN_k^{-2}\}\cap\{|t|\le T^{-1}\})}
 \le\varepsilon
\]
for all large $k$, where
\[
 (\pi_{-x_k}\widetilde{\textbf{u}}_k)(t,y)
 =N_k^2\eta\!\left(\frac{N_k\Psi^{-1}(y)}R\right)
 \textbf{u}'\!\left(N_k^2(t-t_k),N_k\Psi^{-1}(y)\right),
\]
and $\textbf{u}'$ is the Euclidean approximation in \Cref{prop:euclidean-profiles}. In addition,
\[
 \|\textbf{U}_k-
   \Pi_{t_k-t,x_k}T_{N_k}\boldsymbol{\phi}^{\pm\infty}\|_
 {X^1(\{\pm(t-t_k)\ge TN_k^{-2}\}\cap\{|t|\le T^{-1}\})}
 \le\varepsilon.
\]

Choose $T_\theta$ sufficiently large, using
\Cref{lem:extinction}, such that
\[
 \sum_\pm
 \|\Pi_{t_k-t,x_k}T_{N_k}\boldsymbol{\phi}^{\pm\infty}\|_
 {Z(\{\pm(t-t_k)\ge T_\theta N_k^{-2}\}\cap I_\theta)}
 \le\varepsilon^2
\]
for all large $k$.
Then choose $R$ for \Cref{prop:euclidean-profiles} with
$T=T_\theta$. Define the three parts as follows.

\begin{enumerate}
\item
Set $
 \boldsymbol{\omega}_k^{\theta,\pm\infty}
 :=\mathbf{1}_{\{\pm(t-t_k)\ge T_\theta N_k^{-2}\}}(t)
   \Pi_{t_k-t,x_k}P_{\le LN_k}T_{N_k}
                         \boldsymbol{\phi}^{\pm\infty}
 \quad\text{on }I_\theta.$
Choose $R_\theta\ge2R$ large enough that
$P_{\le R_\theta N_k}P_{\le LN_k}=P_{\le LN_k}$.
It follows that $
 \boldsymbol{\omega}_k^{\theta,\pm\infty}
 =P_{\le R_\theta N_k}\boldsymbol{\omega}_k^{\theta,\pm\infty}.$
\item
Set $
 \boldsymbol{\omega}_k^\theta
 :=\widetilde{\textbf{u}}_k\mathbf{1}_{S_k^\theta}.$
By \Cref{lem:euclidean-approximation},
\[
\begin{aligned}
 |\nabla_x^m\boldsymbol{\omega}_k^\theta|
  +N_k^{-2}\mathbf{1}_{S_k^\theta}
       |\partial_t\nabla_x^m\boldsymbol{\omega}_k^{\theta}| \le R_\theta N_k^{|m|+2}\mathbf{1}_{S_k^\theta},
 \qquad 0\le|m|\le10.
\end{aligned}
\]

\item
Set $
 \boldsymbol{\rho}_k^\theta
 :=\textbf{U}_k-\boldsymbol{\omega}_k^{\theta,-\infty}
              -\boldsymbol{\omega}_k^{\theta,+\infty}
              -\boldsymbol{\omega}_k^\theta.$ 
By \Cref{prop:euclidean-profiles}, 
 $\|\boldsymbol{\rho}_k^\theta\|_{X^1(I_\theta)}
 \le C\varepsilon.$
\end{enumerate}

Then we have 
\[
 \sum_\pm
 \|\boldsymbol{\omega}_k^{\theta,\pm\infty}\|_{X^1(I_\theta)}
 +\|\boldsymbol{\omega}_k^\theta\|_{X^1(I_\theta)}
 \le C_{\boldsymbol{\phi}}.
\]
Due to \Cref{lem:extinction},
\[
 \sum_\pm
 \|\boldsymbol{\omega}_k^{\theta,\pm\infty}\|_{Z(I_\theta)}
 \le C\varepsilon^2.
\]
Consequently, \eqref{eq:z-prime-norm} implies
\[
 \sum_\pm
 \|\boldsymbol{\omega}_k^{\theta,\pm\infty}\|_{Z'(I_\theta)}
 +\|\boldsymbol{\rho}_k^\theta\|_{X^1(I_\theta)}
 \le C_{\boldsymbol{\phi}}\varepsilon\le\theta,
\]
by choosing $\varepsilon(\theta,\boldsymbol{\phi})$ sufficiently small
. This completes the proof.
\end{proof} 

\section{Profile decomposition}\label{section5}
In this section, we construct the profile decomposition on $\mathbb{T}^6$.

We recall the operators $T_N$ and
$\Pi_{t,x}=\pi_xe^{-it\Delta}$ from
\Cref{def:euclidean-frames}.
\begin{definition}[Euclidean frames]\label{euclidean-frames}
(1) A Euclidean frame is a sequence
$\mathcal O=(N_k,t_k,x_k)_k$ such that $
 N_k\longrightarrow\infty,
 t_k\longrightarrow0,
 x_k\in\mathbb T^6.$
Two frames
$\mathcal O=(N_k,t_k,x_k)_k$ and
$\mathcal O'=(M_k,s_k,y_k)_k$ are orthogonal if
\begin{equation}\label{frame-distance}
 \left|\log\frac{N_k}{M_k}\right|
 +N_k^2|t_k-s_k|
 +N_kd_{\mathbb T^6}(x_k,y_k)
 \longrightarrow\infty.
\end{equation}
They are equivalent if the expression on the left-hand
side of \eqref{frame-distance} remains bounded.
After passing to a subsequence, any two frames are
either equivalent or orthogonal.

(2) For a frame $\mathcal O=(N_k,t_k,x_k)_k$ and
$\boldsymbol{\phi}\in\dot H^1(\mathbb R^6;\mathbb C^2)$,
the corresponding Euclidean profile is
\[
 \widetilde{\boldsymbol{\phi}}_{\mathcal O_k}
 :=
 \Pi_{t_k,x_k}T_{N_k}\boldsymbol{\phi}.
\]
\end{definition}

\begin{proposition}[Equivalence of frames]\label{equivalence}
(1) If $\mathcal O$ and $\mathcal O'$ are equivalent, then,
after passing to a further subsequence, there exists
a surjective isometry
\[
 T:\dot H^1(\mathbb R^6;\mathbb C^2)
 \longrightarrow
 \dot H^1(\mathbb R^6;\mathbb C^2),
\]
preserving each component gradient norm, such that,
for every $\boldsymbol{\phi}\in\dot H^1$,
\[
 \left\|
 \widetilde{T\boldsymbol{\phi}}_{\mathcal O_k}
 -
 \widetilde{\boldsymbol{\phi}}_{\mathcal O'_k}
 \right\|_{H^1}
 \longrightarrow0.
\]

(2)If $\mathcal O$ and $\mathcal O'$ are orthogonal, then,
for every
$\boldsymbol{\phi},\boldsymbol{\psi}
 \in\dot H^1(\mathbb R^6;\mathbb C^2)$
and $j,\ell\in\{1,2\}$,
\begin{align}
 \left\langle
 (\widetilde{\boldsymbol{\phi}}_{\mathcal O_k})_j,
 (\widetilde{\boldsymbol{\psi}}_{\mathcal O'_k})_j
 \right\rangle_{H^1}
 &\longrightarrow0,
 \label{linear-orth}\\
 \int_{\mathbb T^6}
 |(\widetilde{\boldsymbol{\phi}}_{\mathcal O_k})_j|^{3/2}
 |(\widetilde{\boldsymbol{\psi}}_{\mathcal O'_k})_\ell|^{3/2}
 \,dx
 &\longrightarrow0.
 \label{linear-product}
\end{align}
\end{proposition}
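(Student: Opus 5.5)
The plan is to follow the scalar argument of Ionescu--Pausader for $\mathbb T^3$ essentially verbatim. Everything reduces, by density, to data $\boldsymbol{\phi},\boldsymbol{\psi}\in C_c^\infty(\mathbb R^6;\mathbb C^2)$ and to rescaled Euclidean computations. The density step is legitimate because $T_N$ is uniformly bounded $\dot H^1\to H^1$ by \eqref{Zembedding}, and $\Pi_{t,x}$ is unitary on $H^1$. For \eqref{linear-product}, the Sobolev embedding $H^1(\mathbb T^6)\hookrightarrow L^3$ together with H\"older shows the integral is continuous in $H^1\times H^1$.

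\textbf{Part (1).} Since the frames are equivalent, pass to a subsequence so that
\[
N_k/M_k\to\lambda\in(0,\infty),\qquad N_k^2(t_k-s_k)\to s_\infty,\qquad N_k(x_k-y_k)\to z_\infty\in\mathbb R^6 .
\]
The last limit is computed in local coordinates, which makes sense since $d_{\mathbb T^6}(x_k,y_k)\lesssim N_k^{-1}$. Define
\[
T\boldsymbol{\phi}=\lambda^{-2}\,\mathcal S\bigl(e^{-is_\infty\Delta_{\mathbb R^6}}\boldsymbol{\phi}\bigr)(\lambda^{-1}\cdot+z_\infty),
\]
with the sign and scale conventions to be adjusted by the computation. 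Here $\mathcal S$ denotes the Euclidean propagator acting componentwise, with speed $1$ on the first component and $\tfrac12$ on the second. Each ingredient (dilation, translation, unitary propagator) preserves each component's $\dot H^1$ norm, so $T$ is a surjective isometry of the required type.

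To prove the $H^1$ convergence for $\boldsymbol{\phi}\in C_c^\infty$, write
\[
\widetilde{\boldsymbol\phi}_{\mathcal O'_k}=\pi_{y_k}e^{-is_k\Delta}T_{M_k}\boldsymbol\phi ,
\]
and compare it with $\pi_{x_k}e^{-it_k\Delta}T_{N_k}T\boldsymbol\phi$. Factoring out the common unitary operator reduces the claim to showing that $e^{-i(s_k-t_k)\Delta}T_{M_k}\boldsymbol\phi$ is close in $H^1$ to a translate of $T_{N_k}(\cdots)$. Since $|s_k-t_k|\lesssim N_k^{-2}$, this is a short-time comparison. On times $O(N_k^{-2})$, the periodic flow of concentrated Schwartz data agrees with the rescaled Euclidean flow up to $o(1)$ in $H^1$; the cutoff $\eta(\cdot/N^{1/2})$ and the chart $\Psi$ contribute only tails. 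This is the one genuinely analytic step. I would prove it by comparing Fourier series with Fourier integrals (a Poisson summation error, which is tiny at scale $N^{-1}$), or more cheaply by a stability argument for the linear equation using the spatial cutoff error, exactly as in \Cref{lem:euclidean-approximation} with $F=0$.

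\textbf{Part (2).} First consider $|\log(N_k/M_k)|\to\infty$. Then the $H^1$ pairing in \eqref{linear-orth} tends to $0$ because the two profiles' frequencies concentrate at the separated scales $N_k$ and $M_k$ (Plancherel plus \eqref{eq:extinction-data-bounds}-type frequency decay). The product integral \eqref{linear-product} tends to $0$ by H\"older combined with frequency localization. Alternatively, by rescaling to the larger scale, the smaller-scale profile becomes spread out with small $L^3$ mass on the ball where the larger-scale profile lives.

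If the scales are comparable but $N_k^2|t_k-s_k|\to\infty$, the pairing is reduced, after unitarity, to $\langle e^{i(t_k-s_k)\Delta}\boldsymbol f,\boldsymbol g\rangle_{H^1}$. This tends to $0$ by the dispersive decay in \Cref{lem:extinction}: the $L^\infty$ bound \eqref{eq:extinction-pointwise} combined with $\|\boldsymbol g\|_{L^1}\lesssim N^{-4}$, applied to derivatives of Schwartz data. The product integral likewise tends to $0$ because $\|e^{it\Delta}T_N\boldsymbol\phi\|_{L^3}\to0$ in that regime, again by \eqref{eq:extinction-pointwise}.

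If the scales and times are comparable, then $N_kd(x_k,y_k)\to\infty$. After a bounded-time Euclidean approximation as in Part (1), both profiles are essentially compactly supported at scale $N_k^{-1}$ around points that separate, so both quantities vanish.

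The main obstacle I expect is the time-separation case, specifically justifying that $N_k^2|t_k-s_k|\to\infty$ forces decay even when $|t_k-s_k|$ is not $\lesssim T^{-1}$. Since $t_k,s_k\to0$, however, the difference does lie in the window $[TN_k^{-2},T^{-1}]$ covered by \Cref{lem:extinction}, so that lemma applies directly.
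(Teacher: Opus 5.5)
Your overall strategy matches the standard Ionescu--Pausader argument that the paper relies on; the paper gives no proof of its own and simply cites \cite{IPS2012}, Lemma~5.4. The ingredients are the same: density reduction, a limiting isometry built from dilation, translation and the componentwise Euclidean flow, and a scale/time/space case split using frequency localization, \Cref{lem:extinction}, and short-time Euclidean approximation. Part (1) and the $H^1$ pairings in Part (2) are fine as you describe them. One step, however, fails as written.

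\textbf{The failing step: the spatially separated case for \eqref{linear-product}.} Here the scales are comparable, $N_k^2|t_k-s_k|$ is bounded, and $N_kd_{\mathbb T^6}(x_k,y_k)\to\infty$. You assert that both profiles are essentially supported in balls of radius $O(N_k^{-1})$ around $x_k$ and $y_k$.

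\begin{itemize}
\item For the $H^1$ pairing this is harmless. You first cancel the common propagator by unitarity, and only the bounded rescaled time $N_k^2(t_k-s_k)$ remains.
\item The integral in \eqref{linear-product} is not invariant under a common linear flow, so the times cannot be cancelled there.
\item Nothing prevents $N_k^2|t_k|\to\infty$, for instance $t_k=s_k$ with $N_k^2t_k\to\infty$. In that regime $e^{-it_k\Delta}T_{N_k}\boldsymbol\phi$ is dispersed over the torus and is not localized near $x_k$ at all.
\end{itemize}

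The fix is a further split along subsequences.

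\begin{itemize}
\item If $N_k^2|t_k|$ stays bounded, then $M_k^2|s_k|\lesssim N_k^2|t_k|+N_k^2|t_k-s_k|$ is bounded too. Your Euclidean approximation then applies to each profile separately, and spatial separation finishes the argument.
\item Otherwise, \eqref{eq:extinction-pointwise} (applicable since $t_k\to0$) gives $\|\widetilde{\boldsymbol\phi}_{\mathcal O_k}\|_{L^3}\to0$. H\"older together with $\|\widetilde{\boldsymbol\psi}_{\mathcal O'_k}\|_{L^3}\lesssim\|\boldsymbol\psi\|_{\dot H^1}$ concludes.
\end{itemize}

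\textbf{The time-separated case needs the same correction.} For \eqref{linear-product} the relevant input is not decay of $e^{i(t_k-s_k)\Delta}$. It is the fact that $N_k^2|t_k-s_k|\to\infty$ forces $N_k^2|t_k|\to\infty$ or $M_k^2|s_k|\to\infty$ along a subsequence, after which the same $L^3$ extinction argument applies.

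\textbf{The scale-separated case.} Your alternative ``spatial spreading'' argument has the same defect, since it ignores the time evolution. Use your frequency-localization version instead: split each profile at a frequency threshold and apply H\"older with $L^2\times L^6$ plus Bernstein, which gives a bound $\lesssim M_k/N_k$ up to tails. This commutes with the propagators, including for mixed components $j\neq\ell$.

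With these sub-cases added, the argument is complete.
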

\begin{proof}
	See Lemma 5.4 in \cite{IPS2012} for the proof.
\end{proof}

\begin{definition}[Absence from a frame]\label{absence}
A bounded sequence
$\{\textbf{f}_k\}_k\subset H^1(\mathbb T^6;\mathbb C^2)$
is absent from a Euclidean frame $\mathcal O$ if,
for every
$\boldsymbol{\psi}\in\dot H^1(\mathbb R^6;\mathbb C^2)$
and $j=1,2$,
\[
\begin{aligned}
 \left\langle
 (\textbf{f}_k)_j,
 (\widetilde{\boldsymbol{\psi}}_{\mathcal O_k})_j
 \right\rangle_{L^2}
 &\longrightarrow0,\\
 \left\langle
 (\textbf{f}_k)_j,
 (\widetilde{\boldsymbol{\psi}}_{\mathcal O_k})_j
 \right\rangle_{H^1}
 &\longrightarrow0.
\end{aligned}
\]
\end{definition}

\begin{remark}
	The $L^2$ condition holds automatically for bounded
$H^1$ sequences. Indeed,
\[
\begin{aligned}
 \left|
 \left\langle
 (\textbf{f}_k)_j,
 (\widetilde{\boldsymbol{\psi}}_{\mathcal O_k})_j
 \right\rangle_{L^2}
 \right| \le
 \|\textbf{f}_k\|_{L^2}
 \|T_{N_k}\boldsymbol{\psi}\|_{L^2} 
 \longrightarrow0.
\end{aligned}
\]
Thus absence is equivalently expressed by the
vanishing of all corresponding gradient pairings.

\end{remark}
\begin{lemma}[Refined Strichartz inequality]\label{refined}
For $\textbf{f}\in H^1(\mathbb T^6;\mathbb C^2)$
and an interval $I$ with $|I|\le1$, one has
\begin{equation}\label{refined-display}
 \|e^{it\Delta}\textbf{f}\|_{Z(I)}
 \lesssim
 \|\textbf{f}\|_{H^1}^{11/(12)}
 \left(
 \max_{j=1,2}\sup_N
 N^{-2}
 \|P_N(e^{it\Delta}\textbf{f})_j
       \|_{L^\infty(I\times\mathbb T^6)}
 \right)^{1/12}.
\end{equation}
\end{lemma}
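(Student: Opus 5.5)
The plan is to argue frequency by frequency and interpolate, for each dyadic $N$, between an $L^p_{t,x}$ Strichartz bound with $p$ slightly above $8/3$ and the $L^\infty$ bound that appears on the right of \eqref{refined-display}. Write $\mathbf u=e^{it\Delta}\mathbf f$ and
\[
\alpha:=\max_{j=1,2}\sup_N N^{-2}\|P_N u_j\|_{L^\infty(I\times\mathbb T^6)}.
\]
Because $\mathbf u$ is linear, its $X^1$ norm on any subinterval $J$ is bounded by $\|\mathbf f\|_{H^1}$ (\Cref{prop-pre-Xs-Hs}). It therefore suffices to bound $\sum_N N^2\|P_N\mathbf u\|_{L^3_{t,x}(J\times\mathbb T^6)}^3$ uniformly over $|J|\le1$, and we can treat each component separately. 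For $u_2$ the propagator is $e^{\frac i2t\Delta}$, and the estimates of \Cref{prop-pre-Stri} carry over after rescaling time.

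The key step is the pointwise-in-$N$ interpolation. Fix $p\in(8/3,3)$. For each $N$ we use
\[
\|P_Nu_j\|_{L^3}^3\le \|P_Nu_j\|_{L^\infty}^{3-p}\|P_Nu_j\|_{L^p}^p.
\]
Into this we insert $\|P_Nu_j\|_{L^\infty}\le \alpha N^2$ together with the Strichartz bound $\|P_Nu_j\|_{L^p(J\times\mathbb T^6)}\lesssim N^{3-8/p}\|P_Nf_j\|_{L^2}$. The resulting powers of $N$ are
\[
2+2(3-p)+p(3-8/p)=p,
\]
so
\[
N^2\|P_Nu_j\|_{L^3}^3\lesssim \alpha^{3-p}\,(N\|P_Nf_j\|_{L^2})^p .
\]
Since $p>2$, the embedding $\ell^2\hookrightarrow\ell^p$ gives
\[
\sum_N (N\|P_Nf_j\|_{L^2})^p\lesssim \|f_j\|_{H^1}^p.
\]
Hence $\|\mathbf u\|_{Z(I)}^3\lesssim \alpha^{3-p}\|\mathbf f\|_{H^1}^p$, which means
\[
\|\mathbf u\|_{Z(I)}\lesssim \|\mathbf f\|_{H^1}^{p/3}\alpha^{(3-p)/3}.
\]
The low frequency $N=1$ (i.e.\ $P_{\le1}$) is handled by the same interpolation, using Bernstein and unitarity.

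To reach the exact exponents in \eqref{refined-display} we choose $p$ with $(3-p)/3=1/12$, namely $p=11/4$. This satisfies $8/3<11/4<3$, so \Cref{prop-pre-Stri} applies, and it gives $\|\mathbf f\|_{H^1}^{11/12}\alpha^{1/12}$, which is the claimed bound. The same choice $q=11/4$ was already made in the proof of \Cref{lem:extinction}.

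I do not expect a serious obstacle. The only delicate points are bookkeeping ones: checking that the Strichartz constant from \Cref{prop-pre-Stri} is uniform over subintervals of length at most one (it is, by time translation invariance), and confirming that the vector $Z$ norm is controlled by the sum of the component norms.
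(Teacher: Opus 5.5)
Your proposal is correct and follows the paper's proof essentially step for step. Both arguments interpolate $\|P_Nu_j\|_{L^3}^3\le\|P_Nu_j\|_{L^\infty}^{1/4}\|P_Nu_j\|_{L^{11/4}}^{11/4}$, apply the $L^{11/4}$ Strichartz bound of \Cref{prop-pre-Stri} (with a time rescaling for the second component), and sum over $N$ using $\ell^2\hookrightarrow\ell^{11/4}$. Your opening remark about the $X^1$ bound of the linear flow is harmless but not needed.
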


\begin{proof}
We use \Cref{prop-pre-Stri} at exponent
$11/4>8/3$. Its frequency-supported form gives
\[
 \|P_N(e^{it\Delta}\textbf{f})_j
       \|_{L^{11/4}(I\times\mathbb T^6)}
 \lesssim
 N^{3-32/11}\|P_Nf_j\|_{L^2}.
\]

For the second component of \eqref{eq:system1}, use the time change
$t\mapsto t/2$.
Time translation and unitarity allow any interval
of length at most one.

By H\"older's inequality, \Cref{prop-pre-Stri} gives
\[
\begin{aligned}
 &\quad \sum_N N^{2}
 \|P_N(e^{it\Delta}\textbf{f})_j
       \|_{L^3(I\times\mathbb T^6)}^3\\
 &\le
 ( \max_{j=1,2}\sup_N
 N^{-2}
 \|P_N(e^{it\Delta}\textbf{f})_j
       \|_{L^\infty(I\times\mathbb T^6)})^{1/4}
 \sum_N N^{8-11/2}
 \|P_N(e^{it\Delta}\textbf{f})_j
       \|_{L^{11/4}(I\times\mathbb T^6)}^{11/4}\\
 &\lesssim
 ( \max_{j=1,2}\sup_N
 N^{-2}
 \|P_N(e^{it\Delta}\textbf{f})_j
       \|_{L^\infty(I\times\mathbb T^6)})^{1/4}
 \sum_N
 \bigl(N\|P_Nf_j\|_{L^2}\bigr)^{11/4}\\
 &\lesssim
 ( \max_{j=1,2}\sup_N
 N^{-2}
 \|P_N(e^{it\Delta}\textbf{f})_j
       \|_{L^\infty(I\times\mathbb T^6)})^{1/4}
 \left(
 \sum_N N^2\|P_Nf_j\|_{L^2}^2
 \right)^{11/8}\\
 &\lesssim
 ( \max_{j=1,2}\sup_N
 N^{-2}
 \|P_N(e^{it\Delta}\textbf{f})_j
       \|_{L^\infty(I\times\mathbb T^6)})^{1/4}\|f_j\|_{H^1}^{11/4}.
\end{aligned}
\]
Combing with
\eqref{pre:Zformula} proves the result.
\end{proof}

\begin{proposition}[Profile decomposition]\label{decomp}
Let $\{\textbf{f}_k\}\subset H^1(\mathbb T^6;\mathbb C^2)$ satisfy
\begin{equation}\label{bound}
       \limsup_{k\to\infty}\|\textbf{f}_k\|_{H^1}\le A<\infty,
\end{equation}
and let $I_k=(-T_k,T^k)\ni0$ satisfy $|I_k|\to0$.
After passing to a subsequence,
$\textbf{f}_k\rightharpoonup\textbf{g}$ in $H^1$, and there
exist $J^*\in\mathbb N\cup\{0,\infty\}$, pairwise orthogonal
Euclidean frames
$\mathcal O^\alpha=(N_k^\alpha,t_k^\alpha,x_k^\alpha)_k$,
and
$\boldsymbol{\psi}^\alpha\in\dot H^1(\mathbb R^6;\mathbb C^2)$
such that, for every finite $0\le J\le J^*$,
\begin{equation}\label{profile-sum}
 \textbf{f}_k
 =\textbf{g}
  +\sum_{1\le\alpha\le J}
       \widetilde{\boldsymbol{\psi}}^\alpha_{\mathcal O_k^\alpha}
  +\textbf{R}_k^J.
\end{equation}
The frames may be chosen renormalized: for each $\alpha$,
either
\[
 t_k^\alpha=0\quad\text{for every }k,
 \qquad\text{or}\qquad
 (N_k^\alpha)^2t_k^\alpha\longrightarrow+\infty
 \quad\text{or}\quad-\infty.
\]
For each fixed $J$, the remainder $\textbf{R}_k^J$
converges weakly to zero in $H^1$ and is absent from
$\mathcal O^1,\ldots,\mathcal O^J$.
Moreover,
\begin{align}
 \lim_{J\to J^*}\limsup_{k\to\infty}
   \|e^{it\Delta}\textbf{R}_k^J\|_{Z(I_k)}=0,
 \label{remainder-small}
\end{align}
For $j=1,2$ and every fixed $J$,
\begin{align}
 \|(\textbf{f}_k)_j\|_2^2
 &=\|g_j\|_2^2+\|(\textbf{R}_k^J)_j\|_2^2+o_k(1),
 \label{mass-pythagoras}\\
 \|\nabla(\textbf{f}_k)_j\|_2^2
 &=\|\nabla g_j\|_2^2
   +\sum_{\alpha\le J}
       \|\nabla\psi_j^\alpha\|_{L^2(\mathbb R^6)}^2
   +\|\nabla(\textbf{R}_k^J)_j\|_2^2+o_k(1).
 \label{gradient-pythagoras}
\end{align}
The potential energy term satisfies
\begin{align}
 \operatorname{Re}\int_{\mathbb T^6}
       \overline{(\textbf{f}_k)_2}(\textbf{f}_k)_1^2\,dx
 &=\operatorname{Re}\int_{\mathbb T^6}
       \overline{g_2}g_1^2\,dx
   +\sum_{\substack{\alpha\le J\\t_k^\alpha=0}}
       \operatorname{Re}\int_{\mathbb R^6}
       \overline{\psi_2^\alpha}(\psi_1^\alpha)^2\,dx
 \notag\\
 &\quad+
   \operatorname{Re}\int_{\mathbb T^6}
       \overline{(\textbf{R}_k^J)_2}
                  (\textbf{R}_k^J)_1^2\,dx
   +o_k(1).
 \label{cubic-decoupling}
\end{align}
Consequently, the modified energy satisfies
\begin{align}
 E(\textbf{f}_k)+c_0M(\textbf{f}_k)
 &=E(\textbf{g})+c_0M(\textbf{g})
   +\sum_{\substack{\alpha\le J\\t_k^\alpha=0}}
       E_{\mathbb R^6}(\boldsymbol{\psi}^\alpha)
 \notag\\
 &\quad+
   \sum_{\substack{\alpha\le J\\
          |(N_k^\alpha)^2t_k^\alpha|\to\infty}}
   \left(
     \|\nabla\psi_1^\alpha\|_{L^2(\mathbb R^6)}^2
     +\frac14\|\nabla\psi_2^\alpha\|_{L^2(\mathbb R^6)}^2
   \right)
 \notag\\
 &\quad+
   E(\textbf{R}_k^J)+c_0M(\textbf{R}_k^J)+o_k(1),
 \label{energy-decoupling}
\end{align}
where $c_0$ is the fixed constant in \eqref{eq:torus-sharp-sobolev}.
\end{proposition}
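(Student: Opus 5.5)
We follow the Ionescu--Pausader scheme for $\mathbb T^3$, adapted to the vector setting and to the quadratic potential term. After passing to a subsequence, $\mathbf f_k\rightharpoonup\mathbf g$ in $H^1$. Replacing $\mathbf f_k$ by $\mathbf f_k-\mathbf g$ reduces matters to weakly null sequences. By Rellich compactness, $\mathbf f_k-\mathbf g\to0$ in $L^2$ and in every $L^p$ with $p<3$, which gives the mass Pythagoras \eqref{mass-pythagoras}. The gradient splitting of $\mathbf g$ off $\mathbf f_k-\mathbf g$ is just weak convergence.

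The profiles are extracted inductively, with the refined Strichartz inequality (\Cref{refined}) as the engine. Define
\[
\delta(\mathbf R)=\limsup_{k}\max_{j}\sup_{N,\,t\in I_k}N^{-2}\|P_N(e^{it\Delta}\mathbf R_k)_j\|_{L^\infty}.
\]
If $\delta(\mathbf R^J)=0$, we stop, and \eqref{remainder-small} follows from \eqref{refined-display}. Otherwise we pick $N_k,t_k\in I_k,x_k$ and $j$ that nearly attain the supremum. Since $\mathbf R^J_k\rightharpoonup0$ in $H^1$, weak nullity forces $N_k\to\infty$: for bounded $N$, the quantity $P_N e^{it\Delta}\mathbf R_k$ tends to zero uniformly. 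We also have $t_k\to0$ because $|I_k|\to0$. Rescale by setting
\[
\mathbf h_k(x)=N_k^{-2}\bigl(\Pi_{t_k,x_k}^{-1}\mathbf R^J_k\bigr)(\Psi(x/N_k))\,\eta(x/N_k^{1/2})
\]
(more precisely, we use the inverse of $T_{N_k}$ on the chart). This sequence is bounded in $\dot H^1(\mathbb R^6)$, so it has a weak limit $\boldsymbol\psi$. Pairing against the kernel of $P_1$ shows that $\|\nabla\psi_j\|_{L^2}\gtrsim\delta\,(\delta/A)^{C}$, since the mass of the pairing at unit scale is captured. Renormalizing the frame is done as in \Cref{def:euclidean-frames}. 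If $N_k^2t_k$ stays bounded, we absorb it into $\boldsymbol\psi$ via $e^{is\Delta_{\mathbb R^6}}$ and set $t_k=0$. Otherwise we pass to a subsequence with $N_k^2t_k\to\pm\infty$.

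Next we set $\mathbf R^{J+1}_k=\mathbf R^J_k-\widetilde{\boldsymbol\psi}_{\mathcal O_k}$. By construction $\mathbf R^{J+1}_k$ is absent from the new frame, and we check that it stays absent from the earlier frames. Orthogonality of the new frame to the old ones follows from absence together with \Cref{equivalence}(1): an equivalent frame would yield a nonzero pairing. \Cref{equivalence}(2) then gives the gradient Pythagoras \eqref{gradient-pythagoras} through \eqref{linear-orth}. Since
\[
\sum_\alpha\|\nabla\boldsymbol\psi^\alpha\|^2\le A^2
\]
and each $\|\nabla\boldsymbol\psi^\alpha\|$ bounds $\delta(\mathbf R^{\alpha-1})$ polynomially from below, we get $\delta(\mathbf R^J)\to0$. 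Combined with \Cref{refined}, this gives \eqref{remainder-small}.

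The main obstacle is the potential-energy decoupling \eqref{cubic-decoupling}. We expand the trilinear form $\int\overline{a_2}a_1^2$ over the decomposition. The $\mathbf g$ cross terms vanish because $\mathbf g\in L^3$ is fixed and the other pieces converge to zero in $L^p$ for $p<3$ after localization, or concentrate. Cross terms between distinct profiles vanish by \eqref{linear-product} and H\"older. Each profile with $N_k^2|t_k|\to\infty$ tends to zero in $L^3$ by \eqref{eq:extinction-pointwise}. For $t_k=0$, a change of variables together with the cutoff convergence \eqref{lem42-cutoff-convergence} gives the Euclidean integral. The profile--remainder cross terms are the most delicate. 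Here we approximate $\boldsymbol\psi$ by $C_c^\infty$ functions, so that the profile is bounded by $N_k^2$ on a ball of radius $\sim N_k^{-1}$. We then use that the rescaled remainder converges weakly to zero in $\dot H^1$, hence strongly in $L^p_{loc}$ for $p<3$, which is precisely absence. The resulting terms are of the form $\int|\text{profile}|^2|\mathbf R|$ and $\int|\text{profile}||\mathbf R|^2$, and both tend to zero by local compactness.

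Finally, \eqref{energy-decoupling} follows by summing \eqref{mass-pythagoras}, \eqref{gradient-pythagoras} with weights $1$ and $\frac14$, and \eqref{cubic-decoupling}. The profile masses contribute $o_k(1)$ by \eqref{Zembedding}.
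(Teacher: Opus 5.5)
Your proposal is correct and follows essentially the same Ionescu--Pausader scheme as the paper. You take the weak limit $\mathbf g$, extract Euclidean profiles at near-maximizers of $N^{-2}|P_N(e^{it\Delta}\mathbf R_k)_j|$, get a lower bound on the rescaled weak limit by pairing against the $P_1$ kernel, obtain absence and orthogonality via \Cref{equivalence}, and use \Cref{refined} to make the remainder small. The differences are minor: you iterate with near-maximal selection instead of the paper's thresholds $\delta=2^{-l}$, and you give an explicit argument for \eqref{cubic-decoupling} (via \eqref{linear-product}, \eqref{eq:extinction-pointwise}, and local compactness of the rescaled remainder), where the paper only asserts in one line that it follows from orthogonality of frames.
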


\begin{lemma}\label{extraction}
Under the hypotheses of \Cref{decomp}, for every
$0<\delta\le1$, after passing to a subsequence, there are
$0\le J\lesssim \delta^{-2}$ pairwise orthogonal Euclidean frames
$\mathcal O^1,\ldots,\mathcal O^J$ and profiles
$ \widetilde{\boldsymbol\psi}^{\alpha}_{\mathcal O_k^\alpha}\in\dot H^1(\mathbb R^6;\mathbb C^2)$ such that
\[
 \textbf{f}_k=\textbf{g}+
 \sum_{1\le\alpha\le J}
 \widetilde{\boldsymbol\psi}^{\alpha}_{\mathcal O_k^\alpha}
 +\textbf{R}_k,
 \qquad
 \limsup_{k\to\infty}
 \|e^{it\Delta}\textbf{R}_k\|_{Z(I_k)}\lesssim \delta^{O(1)}.
\]
Moreover, for $j=1,2$,
\[
\begin{aligned}
 \|(\textbf{f}_k)_j\|_2^2
 &=\|g_j\|_2^2+\|(\textbf{R}_k)_j\|_2^2+o_k(1),\\
 \|\nabla(\textbf{f}_k)_j\|_2^2
 &=\|\nabla g_j\|_2^2+
 \sum_{\alpha=1}^J\|\nabla\psi_j^\alpha\|_{L^2(\mathbb R^6)}^2
 +\|\nabla(\textbf{R}_k)_j\|_2^2+o_k(1).
\end{aligned}
\]
\end{lemma}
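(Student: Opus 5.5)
The proof will be an iterative extraction, as in Ionescu--Pausader, with the refined Strichartz inequality (\Cref{refined}) as the engine. First pass to a subsequence with $\mathbf f_k\rightharpoonup\mathbf g$ in $H^1$, and set $\mathbf R_k^0=\mathbf f_k-\mathbf g$. This sequence converges weakly to zero. By Rellich compactness on $\mathbb T^6$ it also converges strongly to zero in $L^2$, which gives the mass identity and the first gradient splitting $\|\nabla(\mathbf f_k)_j\|_2^2=\|\nabla g_j\|_2^2+\|\nabla(\mathbf R^0_k)_j\|_2^2+o_k(1)$. Inductively, given $\mathbf R_k^{J}$, check whether $\limsup_k\|e^{it\Delta}\mathbf R_k^J\|_{Z(I_k)}\le\delta^{C}$. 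If it is, stop.

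If it is not, \Cref{refined} gives a lower bound on the $L^\infty$ quantity. Concretely, there exist $j\in\{1,2\}$, dyadic $N_k$, $t_k\in I_k$ and $x_k\in\mathbb T^6$ with
\[
N_k^{-2}\bigl|P_{N_k}(e^{it_k\Delta}\mathbf R_k^J)_j(x_k)\bigr|\gtrsim \delta^{12C}A^{-11}.
\]
For the second component the propagator is $e^{\frac i2 t\Delta}$, handled by rescaling time. Next I will show $N_k\to\infty$. If $N_k$ stayed bounded, the pairing against the fixed smooth kernel would tend to zero, because $\mathbf R^J_k\rightharpoonup0$ and $t_k\to0$ (since $|I_k|\to0$). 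Then I define the candidate profile as a weak limit in $\dot H^1(\mathbb R^6)$:
\[
\boldsymbol\psi(x)=\text{w-}\lim_k N_k^{-2}\bigl(\Pi_{t_k,x_k}\bigr)^{-1}\mathbf R_k^J(\Psi(x/N_k)),
\]
with the cutoff $\eta$ used to localize. The lower bound says $\boldsymbol\psi$ has nontrivial pairing with the rescaled Littlewood--Paley kernel, whose $\dot H^{-1}$ norm is $O(1)$. Hence $\|\boldsymbol\psi\|_{\dot H^1}\gtrsim\delta^{C'}$.

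Set $\mathcal O=(N_k,t_k,x_k)$ and $\mathbf R_k^{J+1}=\mathbf R_k^J-\widetilde{\boldsymbol\psi}_{\mathcal O_k}$. The weak-limit construction makes $\mathbf R^{J+1}_k$ absent from $\mathcal O$. The main obstacle is making this precise. One has to show that the adjoint of $T_{N_k}$ composed with $\Pi_{t_k,x_k}^{-1}$ asymptotically inverts the profile map, i.e.\ $\langle \mathbf h_k,\widetilde{\boldsymbol\phi}_{\mathcal O_k}\rangle_{\dot H^1}\to\langle\boldsymbol\psi,\boldsymbol\phi\rangle_{\dot H^1(\mathbb R^6)}$ for all $\boldsymbol\phi$, uniformly enough to handle the cutoff $Q_N$ and the low-mass error $\|T_N\boldsymbol\phi\|_{L^2}\lesssim N^{-1/2}$ from \eqref{Zembedding}. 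I would first prove this for $\boldsymbol\phi\in C_c^\infty$ and then extend by density, using \eqref{lem42-cutoff-convergence}. Given this, expanding the square yields
\[
\|\nabla(\mathbf R^{J+1}_k)_j\|_2^2=\|\nabla(\mathbf R^J_k)_j\|_2^2-\|\nabla\psi_j\|_2^2+o_k(1),
\]
so the gradient Pythagoras identity propagates. The mass identity propagates too, since $\|\widetilde{\boldsymbol\psi}_{\mathcal O_k}\|_{L^2}\to0$.

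For orthogonality of frames: if $\mathcal O^{J+1}$ were equivalent to some earlier $\mathcal O^\alpha$, then \Cref{equivalence}(1) combined with absence of $\mathbf R^J_k$ from $\mathcal O^\alpha$ would force $\boldsymbol\psi^{J+1}=0$, a contradiction. After a subsequence the frames are therefore pairwise orthogonal. Since each step removes at least $\delta^{2C'}$ of the energy, which is bounded by $A^2$, the procedure stops after $J\lesssim\delta^{-O(1)}$ steps, which is the claimed bound up to renaming the power of $\delta$. Finally, the frames can be renormalized. If $N_k^2t_k$ stays bounded, absorb the limit time $s$ into the profile, replacing $\boldsymbol\psi$ by $e^{-is\Delta_{\mathbb R^6}}\boldsymbol\psi$ and setting $t_k=0$; this costs $o_k(1)$ in $H^1$ by \Cref{equivalence}(1).
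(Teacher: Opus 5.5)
Your proposal is correct and follows essentially the same route as the paper. It has the same ingredients: the weak limit $\mathbf g$, concentration points extracted via \Cref{refined}, the profile as a weak limit of the rescaled cut-off pullbacks with $\|\boldsymbol\psi\|_{\dot H^1}\gtrsim\Omega$ from pairing against the rescaled Littlewood--Paley kernel, absence and Pythagoras by expanding squares, and termination by energy decrement.

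The differences are minor. You stop when the $Z$-norm of the remainder is small rather than when $\Omega\le\delta$, which is equivalent up to reparametrizing $\delta$. Your derivation of pairwise orthogonality, via \Cref{equivalence}(1) and absence from earlier frames, fills in a step the paper only asserts.
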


\begin{proof}
We assume $A>0$.
For a bounded sequence $\{\textbf{h}_k\}\subset H^1$, set
\[
 \Omega(\{\textbf{h}_k\})
 :=\limsup_{k\to\infty}\max_{j=1,2}
 \sup_{\substack{N\ge1,\ t\in I_k\\x\in\mathbb T^6}}
 N^{-2}|P_N(e^{it\Delta}\textbf{h}_k)_j(x)|.
\]
Because $H^1(\mathbb{T}^6)$ is a separable, up to a
subsequence, there exists $\textbf{g}\in H^1$ such that
$\textbf{f}_k\rightharpoonup\textbf{g}$ in $H^1$.
Writing $\widetilde{\textbf{f}}_k=\textbf{f}_k-\textbf{g}$, we have
$\widetilde{\textbf{f}}_k\rightharpoonup0$ and, for $j=1,2$,
\[
\begin{aligned}
 \|(\textbf{f}_k)_j\|_2^2
 &=\|g_j\|_2^2+\|(\widetilde{\textbf{f}}_k)_j\|_2^2+o_k(1),\\
 \|\nabla(\textbf{f}_k)_j\|_2^2
 &=\|\nabla g_j\|_2^2+
   \|\nabla(\widetilde{\textbf{f}}_k)_j\|_2^2+o_k(1).
\end{aligned}
\]
If $\Omega(\{\widetilde{\textbf{f}}_k\})\le\delta$, no
Euclidean profile needs to be extracted. Otherwise, after passing
to a subsequence, there are a fixed $j\in\{1,2\}$ and
$(N_k,t_k,x_k)$, with $t_k\in I_k$, such that
\begin{equation}\label{selected-point}
 N_k^{-2}|P_{N_k}(e^{it_k\Delta}\widetilde{\textbf{f}}_k)_j(x_k)|
 \ge\frac12\Omega(\{\widetilde{\textbf{f}}_k\})
 >\frac12\delta.
\end{equation}
Now we claim that $N_k\to\infty$. In fact, if $N_k$ remains
bounded, then, up to a subsequence, we may assume that $
 N_k\to N_\infty<\infty$ and 
 $x_k\longrightarrow x_0\in\mathbb T^6.$
In this case, define
$
 \psi'_j=(1-\Delta_{\mathbb T^6})^{-1}
          N_\infty^{-2}P_{N_\infty}\delta_{x_0}$.
So 
\eqref{selected-point} implies, for all sufficiently large $k$,
\[
 \left|
 \left\langle
 \widetilde{\textbf{f}}_k,\boldsymbol{\psi'}
 \right\rangle_{H^1}'
 \right|
 \ge\frac{\delta}{4},
\]
which contradicts
$\widetilde{\textbf{f}}_k\rightharpoonup0$ in $H^1(\mathbb T^6)$.
We then define the Euclidean frame
$\mathcal O=(N_k,t_k,x_k)_k$ and the function
\[
 \psi_j=\mathcal F_{\mathbb R^6}^{-1}
 \left(
 \frac{\eta(\xi)-\eta(2\xi)}{|\xi|^2}
 \right),
\]
By Parseval's identity, the definition of $T_N$, and the
rapid decay of $\psi_j$, we have
\[
 \lim_{k\to\infty}
 \left\|
 (1-\Delta_{\mathbb T^6})(T_{N_k}\boldsymbol{\psi})_j
 -N_k^{-2}P_{N_k}\delta_{0}
 \right\|_{L^{3/2}(\mathbb T^6)}
 =0.
\]
Hence, by Sobolev embedding,
\[
 \lim_{k\to\infty}
 \left\|
 (1-\Delta_{\mathbb T^6})(T_{N_k}\boldsymbol{\psi})_j
 -N_k^{-2}P_{N_k}\delta_{0}
 \right\|_{H^{-1}(\mathbb T^6)}
 =0.
\]
Consequently,
\begin{equation}\label{fixed-profile-test}
 \frac12\Omega(\{\widetilde{\textbf{f}}_k\})
 \le
 \left|
 \left\langle
 \widetilde{\textbf{f}}_k,
 \widetilde{\boldsymbol{\psi}}_{\mathcal O_k}
 \right\rangle_{H^1}
 \right|
 +o_k(1).
\end{equation}
Also, by \eqref{Zembedding},
\[
 \limsup_{k\to\infty}
 \|\widetilde{\boldsymbol{\psi}}_{\mathcal O_k}\|_{H^1}
 \lesssim1.
\]

For $R>0$ and sufficiently large $k$, define
\[
 \boldsymbol\phi_k^R(y)
 =N_k^{-2}\eta(y/R)
   \bigl(\Pi_{-t_k,-x_k}\widetilde{\textbf{f}}_k\bigr)
        \bigl(\Psi(y/N_k)\bigr).
\]
These sequences are uniform bounded in $R$ in $\dot{H}^1(\mathbb{R}^6)$,
thus we obtain $
 \boldsymbol\phi_k^R\rightharpoonup\boldsymbol\phi^R$ in $\dot H^1(\mathbb R^6;\mathbb C^2)$.
Moreover, we assume $\boldsymbol{\phi^{R}}\rightharpoonup \boldsymbol{\phi}\in \dot{H}^{\frac{1}{2}}(\mathbb{R}^d)$, and by uniqueness, $\boldsymbol{\phi}^R=\eta(\frac{x}{R})\boldsymbol{\phi}(x)$.

Let $\boldsymbol\gamma\in C_c^\infty(B(0,R/2);\mathbb C^2)$.
For $j=1,2$, we have 
\begin{equation}\label{weak-profile-test}
\begin{aligned}
 \left\langle(\widetilde{\textbf{f}}_k)_j,
       (\widetilde{\boldsymbol\gamma}_{\mathcal O_k})_j
       \right\rangle_{H^1(\mathbb T^6)}=
 \left\langle(\boldsymbol\phi_k^R)_j,\gamma_j
       \right\rangle_{\dot H^1(\mathbb R^6)}+o_k(1)
 \longrightarrow
 \langle\phi_j,\gamma_j\rangle_{\dot H^1(\mathbb R^6)}.
\end{aligned}
\end{equation}

Consequently,
$\textbf{h}_k=\widetilde{\textbf{f}}_k
               -\widetilde{\boldsymbol\phi}_{\mathcal O_k}$
is absent from $\mathcal O$.
Using \eqref{weak-profile-test} with
$\boldsymbol\gamma=\boldsymbol\psi$ in
\eqref{fixed-profile-test}, we obtain
\begin{equation}\label{profile-lower}
 \|\boldsymbol\phi\|_{\dot H^1}
 \gtrsim\Omega(\{\widetilde{\textbf{f}}_k\})>\delta.
\end{equation}
Expanding the squared norms, for $j=1,2$, gives
\begin{equation}\label{one-extraction}
\begin{aligned}
 \|(\textbf{h}_k)_j\|_2^2
 &=\|(\widetilde{\textbf{f}}_k)_j\|_2^2+o_k(1),\\
 \|\nabla(\textbf{h}_k)_j\|_2^2
 &=\|\nabla(\widetilde{\textbf{f}}_k)_j\|_2^2
    -\|\nabla\phi_j\|_{L^2(\mathbb R^6)}^2+o_k(1).
\end{aligned}
\end{equation}

Finally, set $\textbf{f}_k^0=\widetilde{\textbf{f}}_k$.
Whenever $\Omega(\{\textbf{f}_k^\alpha\})>\delta$, repeat
the preceding construction, retaining the selection
\eqref{selected-point}, and let
\[
 \textbf{f}_k^{\alpha+1}=\textbf{f}_k^\alpha
  -\widetilde{\boldsymbol\psi}^{\alpha+1}_{\mathcal O_k^{\alpha+1}}.
\]
So $\textbf{f}_k^{\alpha+1}$ is absent from $\mathcal{O}^{\alpha}$, and it also absent from previous frames. Also, all frames are pairwise orthogonal.
Iterating \eqref{one-extraction}, we have
\[
\begin{aligned}
 \|(\textbf{f}_k)_j\|_2^2
 &=
 \|g_j\|_2^2
 +\|(\textbf{f}_k^{\alpha+1})_j\|_2^2
 +o_k(1),\\
 \|\nabla(\textbf{f}_k)_j\|_2^2
 &=
 \|\nabla g_j\|_2^2
 +\sum_{\beta=1}^{\alpha+1}
   \|\nabla\psi_j^\beta\|_{L^2(\mathbb R^6)}^2
 +\|\nabla(\textbf{f}_k^{\alpha+1})_j\|_2^2
 +o_k(1).
\end{aligned}
\]
By \eqref{bound}, \eqref{profile-lower}, and the orthogonality property, the procedure stops after at most
$O(\delta^{-2})$ steps.
Suppose that $J$ profiles have been extracted when it stops.
We then define
$\textbf{R}_k
 =\textbf{f}_k-\textbf{g}
 -\sum_{\alpha=1}^{J}
   \widetilde{\boldsymbol{\psi}}^\alpha_{\mathcal O_k^\alpha}$.
By construction, $\Omega(\{\textbf{R}_k\})\le\delta$.
\Cref{refined} gives
\[
 \limsup_{k\to\infty}
 \|e^{it\Delta}\textbf{R}_k\|_{Z(I_k)}
 \lesssim \delta^{O(1)}.
\]
This completes the proof.
\end{proof}

\begin{remark}
The functions $\textbf{g}$ and
$\widetilde{\boldsymbol\psi}^{\alpha}_{\mathcal O_k^\alpha}$
are called profiles. The former is the scale-one profile,
while the latter are Euclidean profiles.
\end{remark}

\begin{proof}[Proof of Proposition~\ref{decomp}]
Let $\delta=2^{-l}$, and apply inductively \Cref{extraction} to get the sequences of frames and profiles $\mathcal{O}^{\alpha}$, $\tilde{\boldsymbol{\psi}}^{\alpha}_{\mathcal{O}^{\alpha}_k}$. And \eqref{energy-decoupling} following from orthogonality of frames.
\end{proof}

\begin{proposition}[Almost orthogonality of nonlinear profiles]
\label{profile-orth}
Let $\mathcal O^\alpha$ and $\mathcal O^\beta$ be orthogonal
renormalized Euclidean frames, with associated data
$\boldsymbol{\psi}^\alpha,\boldsymbol{\psi}^\beta
\in\dot H^1(\mathbb R^6;\mathbb C^2)$.
Assume that their associated Euclidean solutions satisfy
the scattering hypothesis of \Cref{prop:euclidean-profiles}.
Let $\textbf{U}_k^\alpha,\textbf{U}_k^\beta$ be the corresponding
nonlinear profiles supplied by that proposition, with
\[
 \textbf{U}_k^\alpha(0)
 =\widetilde{\boldsymbol{\psi}}^\alpha_{\mathcal O_k^\alpha},
 \qquad
 \textbf{U}_k^\beta(0)
 =\widetilde{\boldsymbol{\psi}}^\beta_{\mathcal O_k^\beta}.
\]
Let $\textbf{G}$ be the strong solution with
$\textbf{G}(0)=\textbf{g}\in H^1(\mathbb T^6;\mathbb C^2)$,
and denote its lifespan by $I_0$.
Suppose $I_k$ are intervals containing zero with $|I_k|\to0$.
Then, for $j=1,2$,
\begin{align}
 \lim_{k\to\infty}\sup_{t\in I_k}
 \left|
 \left\langle
   \nabla(\textbf{U}_k^\alpha)_j(t),
   \nabla(\textbf{U}_k^\beta)_j(t)
 \right\rangle_{L^2}
 \right|
 &=0,
 \label{nonlinear-gradient-orth}\\
 \lim_{k\to\infty}\sup_{t\in I_k\cap I_0}
 \left|
 \left\langle
   \nabla(\textbf{U}_k^\alpha)_j(t),
   \nabla G_j(t)
 \right\rangle_{L^2}
 \right|
 &=0.
 \label{nonlinear-scale-one-orth}
\end{align}
\end{proposition}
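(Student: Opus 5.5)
The approach is to reduce everything to orthogonality statements about linear evolutions and compactly supported Euclidean cores. Corollary~\ref{euclidean} gives these cores; Proposition~\ref{equivalence}(2) and the extinction lemma then control the pieces. Fix $\theta>0$. For $k$ large, $I_k\subset I_\theta$, so Corollary~\ref{euclidean} applies on $I_k$ to both profiles:
\[
\mathbf U_k^\gamma=\boldsymbol\omega_k^{\gamma,\theta,-\infty}+\boldsymbol\omega_k^{\gamma,\theta,+\infty}+\boldsymbol\omega_k^{\gamma,\theta}+\boldsymbol\rho_k^{\gamma,\theta},\qquad \gamma\in\{\alpha,\beta\}.
\]
Since $X^1\hookrightarrow L^\infty_tH^1_x$, the remainders $\boldsymbol\rho$ contribute at most $C_{\boldsymbol\phi}\theta$ to every pairing in \eqref{nonlinear-gradient-orth}, uniformly in $t\in I_k$. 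The same holds for \eqref{nonlinear-scale-one-orth}, using that $\mathbf G$ is bounded in $L^\infty H^1$ on a fixed compact subinterval of $I_0$ containing $I_k$. It therefore suffices to show that each remaining cross pairing tends to $0$ uniformly in $t\in I_k$, and then let $\theta\to0$.

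For \eqref{nonlinear-scale-one-orth}, I would pair the pieces of $\mathbf U_k^\alpha$ against $\mathbf G$ one at a time.
\begin{itemize}
\item For the core $\boldsymbol\omega_k^{\alpha,\theta}$, the bounds in \eqref{omega-bounds} put it on a ball of radius $R_\theta N_k^{-1}$ with $|\nabla\boldsymbol\omega|\lesssim N_k^3$. Cauchy--Schwarz gives a bound of $\|\nabla G_j(t)\|_{L^2(B(x_k,R_\theta N_k^{-1}))}\cdot O(1)$. This tends to $0$ uniformly in $t$ by equicontinuity: $t\mapsto \nabla G(t)\in L^2$ is continuous on a compact set, so $\{|\nabla G(t)|^2\}$ is uniformly integrable.
\item For the linear pieces $\omega_k^{\alpha,\theta,\pm\infty}=\mathbf 1\,e^{i(t-t_k)\Delta}\pi_{x_k}T_{N_k}\boldsymbol\phi^{\pm\infty}$, I would write $\mathbf G(t)=e^{it\Delta}\mathbf g+o(1)$ in $H^1$ uniformly on $I_k$, using continuity of $\mathbf G$ and $|I_k|\to0$. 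The pairing then becomes $\langle \nabla\Pi_{t_k,x_k}T_{N_k}\boldsymbol\phi^{\pm}, \nabla\mathbf g\rangle$ up to $o(1)$, and this tends to $0$ because $T_{N_k}\boldsymbol\phi\rightharpoonup0$ in $H^1$ as $N_k\to\infty$.
\end{itemize}

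For \eqref{nonlinear-gradient-orth}, I would consider the nine cross pairings between pieces of the two profiles.
\begin{itemize}
\item Linear--linear pairings: by unitarity of $e^{i(t-t_k^\alpha)\Delta}$, each reduces to an $H^1$ pairing of the form $\langle \Pi_{t^\alpha_k,x^\alpha_k}T_{N_k^\alpha}\boldsymbol\phi^{\alpha,\pm},\Pi_{t^\beta_k,x^\beta_k}T_{N_k^\beta}\boldsymbol\phi^{\beta,\pm}\rangle_{H^1}$, with $t$ dropping out. Strictly, the indicator cutoffs differ, but there the pairing is simply zero or equal to this. The reduced pairing tends to $0$ by \eqref{linear-orth}.
\item Core--core pairings: the supports $S_k^{\alpha,\theta}$ and $S_k^{\beta,\theta}$ are disjoint for large $k$ when the times or positions separate. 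If the scales separate, say $N_k^\alpha\ll N_k^\beta$, Cauchy--Schwarz with the pointwise bounds gives $\lesssim (N_k^\alpha)^3 (N_k^\beta)^3 (N_k^\beta)^{-6}\cdot (N_k^\beta/N_k^\alpha)^{0}$. Computing more carefully, $\|\nabla\omega^\alpha\|_{L^2(\text{supp}\,\omega^\beta)}\lesssim (N^\alpha_k)^3(N^\beta_k)^{-3}\to0$.
\item Core--linear pairings: I would rewrite the core at its time as a compactly supported concentrated bump. The pairing is then against $e^{i s\Delta}T_{N}\boldsymbol\phi$ for an orthogonal frame, and I would invoke \eqref{linear-orth} after approximating the core by $\Pi_{t,x}T_N$ of a fixed function. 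The key identity is $\boldsymbol\omega_k^{\alpha,\theta}(t)=\pi_{x_k}T_{N_k}[\eta(\cdot/R)\mathbf u'(N_k^2(t-t_k))]$ on $S_k$, with $N_k^2(t-t_k)$ ranging over a compact set. Compactness of $\{\mathbf u'(s)\}_{|s|\le T_\theta}$ in $\dot H^1$ then gives uniformity in $t$.
\end{itemize}

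The main obstacle is this last uniformity in $t$ for the mixed pairings. Since \eqref{linear-orth} is stated for fixed profiles, I would handle it with a finite $\theta$-net of the compact family $\{\mathbf u'(s)\}$. Frame orthogonality is preserved under a bounded time shift $N_k^{-2}s$, so the net can be used uniformly. A related technicality is that the time-cutoff indicators in $\boldsymbol\omega^{\pm\infty}$ are harmless: they only switch the pairing between zero and the full linear pairing.
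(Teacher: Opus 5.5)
Your proof is correct and shares the paper's skeleton: decompose both profiles with \Cref{euclidean}, discard $\boldsymbol\rho$ at cost $O(\theta)$ in $L^\infty_tH^1_x$, and sort the remaining pairings into linear--linear, linear--core and core--core. Three of the sub-arguments differ, and in each case yours buys something.

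For cores at separated scales, the paper works in frequency. Bernstein removes $P_{\le\varepsilon^{10}N_k^\alpha}$ of the concentrated core and $P_{>\varepsilon^{-10}N_k^\beta}$ of the spread-out one, and the remaining pieces have disjoint Fourier supports. Your physical-space bound $\|\nabla\boldsymbol\omega^{\mathrm{low}}\|_{L^\infty}\|\nabla\boldsymbol\omega^{\mathrm{high}}\|_{L^1}\lesssim_\theta (N_{\mathrm{low}}/N_{\mathrm{high}})^3$ is shorter and gives a rate.

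For linear--core pairings, the paper only refers to \cite{YY}. You make this step self-contained by noting that on $S_k^\theta$ the core equals $\pi_{x_k}T_{N_k}[\eta(\cdot/R)\mathbf u'(s)]$ with $s=N_k^2(t-t_k)$ in a compact set. Everything then reduces to \eqref{linear-orth} for frames differing from the original ones by a bounded renormalized time shift.

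For $\mathbf G$, the paper declares it a profile with trivial frame and sets $\boldsymbol\omega^{0,\theta,\pm\infty}=\frac12\mathbf G$. Its Case 1 does not literally cover this, since $\mathbf G$ is not a linear flow of a concentrating profile and $(1,0,0)$ is not a Euclidean frame. Your direct treatment fills that in: against the core you use the vanishing of $\|\nabla G_j(t)\|_{L^2}$ on balls of radius $R_\theta N_k^{-1}$, and against the linear pieces you use $\mathbf G(t)=e^{it\Delta}\mathbf g+o_{H^1}(1)$ together with $\Pi_{t_k,x_k}T_{N_k}\boldsymbol\phi\rightharpoonup0$.

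One point needs to be stated carefully in the linear--core step. The uniformity in $t$ should come from letting the core's frame carry the time dependence. For instance, choose $t_k\in I_k$ nearly attaining the supremum and pass to a subsequence with $(N_k^\alpha)^2(t_k-t_k^\alpha)\to s_\infty$. The reason is that when $N_k^\beta\gg N_k^\alpha$, shifting the other profile's linear flow by $\epsilon(N_k^\alpha)^{-2}$ is not a small perturbation, so a net in $t$ measured on that side would not be uniform.
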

\begin{proof}
Set $\textbf{U}_k^0(0)=\textbf{g}$ and
$\textbf{U}_k^0=\textbf{G}$ for all $k$, and associate this
scale-one profile with the trivial frame $(1,0,0)_k$.

Fix $j\in\{1,2\}$, for any $\theta>0$, \Cref{euclidean} gives
$T_{\theta,\alpha},R_{\theta,\alpha},T_{\theta,\beta},
R_{\theta,\beta}$ sufficiently large such that
\[
\begin{aligned}
 \textbf{U}_k^\alpha
 &=\boldsymbol{\omega}_k^{\alpha,\theta,-\infty}
  +\boldsymbol{\omega}_k^{\alpha,\theta,+\infty}
  +\boldsymbol{\omega}_k^{\alpha,\theta}
  +\boldsymbol{\rho}_k^{\alpha,\theta},\\
 \textbf{U}_k^\beta
 &=\boldsymbol{\omega}_k^{\beta,\theta,-\infty}
  +\boldsymbol{\omega}_k^{\beta,\theta,+\infty}
  +\boldsymbol{\omega}_k^{\beta,\theta}
  +\boldsymbol{\rho}_k^{\beta,\theta}.
\end{aligned}
\]
For $\textbf{U}_k^0$, set
 $\boldsymbol{\rho}_k^{0,\theta}
 =\boldsymbol{\omega}_k^{0,\theta}=0, 
 \boldsymbol{\omega}_k^{0,\theta,+\infty}
 =\boldsymbol{\omega}_k^{0,\theta,-\infty}
 =\tfrac12\textbf{G}.$ Thus the pairing with $\textbf{G}$ is included by taking
$\beta=0$. Since \eqref{eq:euclidean-profile-core}
and \eqref{eq:euclidean-profile-tail} still give
$\|\boldsymbol{\rho}_k^{\gamma,\theta}\|_{L_t^\infty H_x^1}
\le C\theta$, it suffices to consider the following three
pairings, uniformly for $t\in I_k$:
\[
 \left\langle
 (\boldsymbol{\omega}_k^{\alpha,\theta,\pm\infty})_j,
 (\boldsymbol{\omega}_k^{\beta,\theta,\pm\infty})_j
 \right\rangle_{\dot H^1},
 \left\langle
 (\boldsymbol{\omega}_k^{\alpha,\theta,\pm\infty})_j,
 (\boldsymbol{\omega}_k^{\beta,\theta})_j
 \right\rangle_{\dot H^1},
 \left\langle
 (\boldsymbol{\omega}_k^{\alpha,\theta})_j,
 (\boldsymbol{\omega}_k^{\beta,\theta})_j
 \right\rangle_{\dot H^1}.
\]

\medskip\noindent
\emph{Case 1.}
By the construction, for $\gamma=\alpha,\beta\ne0$,
\[
 \boldsymbol{\omega}_k^{\gamma,\theta,\pm\infty}(t)
 =\mathbf{1}_{\{\pm(t-t_k^\gamma)\ge
       T_{\theta,\gamma}(N_k^\gamma)^{-2},\,
       |t|\le T_{\theta,\gamma}^{-1}\}}
   \Pi_{t_k^\gamma-t,x_k^\gamma}T_{N_k^\gamma}
   \boldsymbol{\phi}^{\gamma,\pm\infty},
\]
where $\boldsymbol{\phi}^{\gamma,\pm\infty}$ are the Euclidean
scattering states. So by \eqref{linear-orth}
\[
\begin{aligned}
 \sup_{t\in I_k}\left|\left\langle
 (\boldsymbol{\omega}_k^{\alpha,\theta,\pm\infty})_j(t),
 (\boldsymbol{\omega}_k^{\beta,\theta,\pm\infty})_j(t)
 \right\rangle_{\dot H^1}\right| \le\left|\left\langle
 (\Pi_{t_k^\alpha,x_k^\alpha}T_{N_k^\alpha}
       \boldsymbol{\phi}^{\alpha,\pm\infty})_j,
 (\Pi_{t_k^\beta,x_k^\beta}T_{N_k^\beta}
       \boldsymbol{\phi}^{\beta,\pm\infty})_j
 \right\rangle_{\dot H^1}\right|\longrightarrow0.
\end{aligned}
\]
\noindent\emph{Case 2.}
By the construction in \Cref{euclidean},
\[
 \boldsymbol{\omega}_k^{\beta,\theta}
 =\widetilde{\textbf{u}}_k^\beta\mathbf{1}_{S_k^{\beta,\theta}},
 \qquad
 S_k^{\beta,\theta}
 =\left\{(t,x):
 \begin{array}{l}
 |t-t_k^\beta|<T_{\theta,\beta}(N_k^\beta)^{-2},\\
 d(x,x_k^\beta)\le R_{\theta,\beta}(N_k^\beta)^{-1}
 \end{array}\right\}.
\]
By a similar argument in \cite{YY}, we get 
 $ \left\langle (\boldsymbol{\omega}_k^{\alpha,\theta,\pm\infty})_j,
 (\boldsymbol{\omega}_k^{\beta,\theta})_j
 \right\rangle_{\dot H^1}\to 0 $
 as $\theta \to 0$.

\noindent\emph{Case 3.}
Let $\varepsilon>0$ be small. If $
 \frac{N_k^\alpha}{N_k^\beta}
 +\frac{N_k^\beta}{N_k^\alpha}\le\varepsilon^{-1000}$,
then $S_k^{\alpha,\theta}\cap S_k^{\beta,\theta}=\varnothing$
for all sufficiently large $k$ by orthogonality of the frames.
Thus the pairing is zero.
Otherwise, by symmetry, we may assume that
$N_k^\alpha/N_k^\beta\ge\varepsilon^{-1000}/2$.
By Bernstein estimate and derivative bound of $\omega_k^{\beta,\theta}$,
\[
\begin{aligned}
 \|P_{\le\varepsilon^{10}N_k^\alpha}
          \boldsymbol{\omega}_k^{\alpha,\theta}\|_{\dot H^1}
 &\lesssim(\varepsilon^{10}N_k^\alpha)^4
          \|\boldsymbol{\omega}_k^{\alpha,\theta}\|_{L^1}
 \lesssim \varepsilon^{40},\\
 \|P_{>\varepsilon^{-10}N_k^\beta}
          \boldsymbol{\omega}_k^{\beta,\theta}\|_{\dot H^1}
 &\lesssim(\varepsilon^{-10}N_k^\beta)^{-1}
          \|\nabla^2\boldsymbol{\omega}_k^{\beta,\theta}\|_{L^2}
 \lesssim \varepsilon^{10}.
\end{aligned}
\]
Since $\varepsilon^{10}N_k^\alpha\gg
\varepsilon^{-10}N_k^\beta$, the remaining frequency supports
are disjoint. Consequently,
\[
\begin{aligned}
 \left|\left\langle
 (\boldsymbol{\omega}_k^{\alpha,\theta})_j,
 (\boldsymbol{\omega}_k^{\beta,\theta})_j
 \right\rangle_{\dot H^1}\right|& \le
 \left|\left\langle
 (P_{\le\varepsilon^{10}N_k^\alpha}
       \boldsymbol{\omega}_k^{\alpha,\theta})_j,
 (\boldsymbol{\omega}_k^{\beta,\theta})_j
 \right\rangle_{\dot H^1}\right|\\
 &\quad +
 \left|\left\langle
 (P_{>\varepsilon^{10}N_k^\alpha}
       \boldsymbol{\omega}_k^{\alpha,\theta})_j,
 (P_{>\varepsilon^{-10}N_k^\beta}
       \boldsymbol{\omega}_k^{\beta,\theta})_j
 \right\rangle_{\dot H^1}\right|\\
 &\quad +
 \left|\left\langle
 (P_{>\varepsilon^{10}N_k^\alpha}
       \boldsymbol{\omega}_k^{\alpha,\theta})_j,
 (P_{\le\varepsilon^{-10}N_k^\beta}
       \boldsymbol{\omega}_k^{\beta,\theta})_j
 \right\rangle_{\dot H^1}\right|\\
 &\lesssim \varepsilon^{40}+\varepsilon^{10}.
\end{aligned}
\]
Sending $k\to\infty$ and then
$\varepsilon\to0$ proves this case.

Combining the three cases complete the proof.
\end{proof}
\section{Proof of \Cref{thm:main}}\label{se6}
First, we consider the focusing threshold\cite{GMXZ},
\begin{proposition}\label{sharp}
Recall the definition of threshold in \eqref{ground}, we obtain
\begin{equation}\label{ground-identities}
-\Delta \mathbf{W}=F(\mathbf{W}),\qquad
3E(\mathbf{W})=\|\nabla W_1\|_2^2+\frac14\|\nabla W_2\|_2^2
=\frac34\int_{\mathbb R^6}Q^3.
\end{equation}
\end{proposition}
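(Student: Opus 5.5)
Our plan is to verify the elliptic equation for $\mathbf W=(Q/\sqrt2,Q)$ by direct computation, and then obtain the energy identities from a Pohozaev-type integration by parts.

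\emph{Step 1: the profile equation.} With $Q(x)=(1+|x|^2/24)^{-2}$ on $\mathbb R^6$, the standard computation for the Aubin--Talenti bubble in dimension $d=6$ (where $(d+2)/(d-2)=2$) gives
\[
-\Delta Q=Q^2 .
\]
One checks this by writing $Q=(1+r^2/24)^{-2}$ and using the radial Laplacian $Q''+\frac5r Q'$. We then test the components against the system. With the paper's convention $\Delta\mathbf W=-F(\mathbf W)$ and the mass-resonant operator $\frac12\Delta$ acting on the second component, the required relations are $-\Delta W_1=W_2\overline{W_1}$ and $-\frac12\Delta W_2=W_1^2$. For the first, $-\Delta(Q/\sqrt2)=Q^2/\sqrt2=Q\cdot(Q/\sqrt2)$. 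For the second, $-\frac12\Delta Q=\frac12Q^2=(Q/\sqrt2)^2$. So $-\Delta\mathbf W=F(\mathbf W)$ holds in the system sense, with the factor $\frac12$ on the second component.

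\emph{Step 2: the energy identities.} We pair the first equation with $\overline{W_1}$ and the second with $\frac12\overline{W_2}$, and integrate by parts; the decay $Q\sim|x|^{-4}$ and $\nabla Q\sim|x|^{-5}$ makes the boundary terms vanish in $d=6$. This gives
\[
\|\nabla W_1\|_2^2+\tfrac14\|\nabla W_2\|_2^2=\int W_2W_1^2+\tfrac12\int W_1^2W_2 .
\]
More directly, $\int W_1^2W_2=\frac12\int Q^3$. On the other side, $\|\nabla W_1\|^2=\frac12\|\nabla Q\|^2$ and $\frac14\|\nabla W_2\|^2=\frac14\|\nabla Q\|^2$, so the kinetic part equals $\frac34\|\nabla Q\|_2^2=\frac34\int Q^3$, using $\|\nabla Q\|_2^2=\int Q^3$ from $-\Delta Q=Q^2$. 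Hence $H(\mathbf W)=\frac34\int Q^3$. Finally,
\[
E(\mathbf W)=\tfrac34\int Q^3-\tfrac12\int Q^3=\tfrac14\int Q^3,
\]
so $3E(\mathbf W)=\frac34\int Q^3=H(\mathbf W)$, which is the stated identity.

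The main obstacle is bookkeeping the constants: the $\frac12$ on $\Delta$ in the second component, the weight $\frac14$ in $H$, and the normalization $1/\sqrt2$ must all match. Verifying $-\Delta Q=Q^2$ with the constant $24=d(d-2)$ is routine. As a final cross-check, $C_6^{-6}=4E(\mathbf W)=\int Q^3$ is consistent with the sharp Sobolev constant $\|Q\|_3^2=C_6^2\|\nabla Q\|_2^2$.
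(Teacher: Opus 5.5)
Your proposal is correct, and all the constants check out: $-\Delta Q=Q^2$ (the Aubin--Talenti bubble with $24=d(d-2)$), the component equations $-\Delta W_1=W_1W_2$ and $-\tfrac12\Delta W_2=W_1^2$, and then $H(\mathbf W)=\tfrac34\int Q^3$ and $E(\mathbf W)=\tfrac14\int Q^3$. The paper gives no argument of its own here; it simply imports these identities from Gao--Meng--Xu--Zheng, and your direct verification is the standard computation behind that citation.

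Your version also makes one point explicit that the paper's notation hides. The identity $-\Delta\mathbf W=F(\mathbf W)$ only holds with the vector operator $\operatorname{diag}(\Delta,\tfrac12\Delta)$, matching the paper's vector propagator $(e^{it\Delta},e^{\frac i2 t\Delta})$. Read literally, componentwise, it would require $-\Delta Q=\tfrac12 Q^2$, which is false. Finally, because $\mathbf W$ is real, the paper's inconsistent ordering $u_1\overline{u_2}$ versus $u_2\overline{u_1}$ in $F_1$ does not affect the computation.
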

By a simple energy trapping, we have
\begin{proposition}\label{trap}
Mass and energy are conserved by strong solutions. If
\[
E(\mathbf{u}_0)+c_0 M(\mathbf{u}_0)\le(1-\delta_0)E(\mathbf{W}),\quad
\|\nabla u_{0,1}\|_2^2+\frac14\|\nabla u_{0,2}\|_2^2+c_0 M(\mathbf{u}_0)<3E(\mathbf{W})
\]
for $0<\delta_0<1$ and $c_0$ in \eqref{eq:torus-sharp-sobolev}, then throughout their lifespan,
\begin{align}
\|\nabla u_1(t)\|_2^2+\frac14\|\nabla u_2(t)\|_2^2+c_0 M(\mathbf{u}(t))
&\le(1-\sqrt{\delta_0/3})^2\,3E(\mathbf{W}),\label{trapping}\\
(\|\nabla u_1(t)\|_2^2+\frac14\|\nabla u_2(t)\|_2^2+c_0 M(\mathbf{u}(t)))&\le 3 (E(\mathbf{u}(t))+c_0M(\mathbf{u}(t))) .\label{coercivity}
\end{align}
\end{proposition}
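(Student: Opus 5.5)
Our plan is the standard energy-trapping continuity argument, run with the modified kinetic energy
\[
 y(t):=\|\nabla u_1(t)\|_2^2+\tfrac14\|\nabla u_2(t)\|_2^2+c_0M(\mathbf u(t)),
\]
which is the quantity the torus Sobolev inequality \eqref{eq:torus-sharp-sobolev} controls.

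\emph{Conservation.} For smooth solutions, conservation of $M$ and $E$ follows by pairing the equations with $\bar u_1$, $\bar u_2$, $\partial_t\bar u_1$, $\partial_t\bar u_2$. The factor $\tfrac14$ and the mass-resonance choice $\kappa=\tfrac12$ are exactly what make the nonlinear contributions cancel. For strong solutions, approximate $\mathbf u_0$ in $H^1$ by smooth data. \Cref{prop:stability-h1} then gives convergence of the corresponding solutions in $X^1\hookrightarrow L^\infty_tH^1$ on compact subintervals. Since $M$ and $E$ are continuous on $H^1(\mathbb T^6)$ (the cubic term is continuous because $H^1\hookrightarrow L^3$), conservation passes to the limit.

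\emph{Key inequality.} By Hölder, $|\int\bar u_2u_1^2|\le\|u_2\|_3\|u_1\|_3^2$. Applying \eqref{eq:torus-sharp-sobolev} componentwise, we want
\[
 \Bigl|\operatorname{Re}\int\bar u_2u_1^2\Bigr|\le C_*\,y^{3/2},
\]
with $C_*$ chosen so that equality in the Euclidean analogue is attained at $\mathbf W$. The Euclidean identity comes from \eqref{ground-identities}: $\int\overline{W_2}W_1^2=\tfrac12\int Q^3=\tfrac23\cdot 3E(\mathbf W)$, with kinetic part $3E(\mathbf W)$. Hence $C_*=\tfrac23(3E(\mathbf W))^{-1/2}$.

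The main obstacle is proving the vector Sobolev inequality with this sharp constant from the scalar one. The weights $(1,\tfrac14)$ on the two components must be handled by optimizing $ab^2$ subject to $b^2+\tfrac14a^2$-type constraints. Concretely, we use
\[
 \|u_2\|_3\|u_1\|_3^2\le C_6^3\,(\|\nabla u_2\|^2+c_0\|u_2\|^2)^{1/2}(\|\nabla u_1\|^2+c_0\|u_1\|^2),
\]
together with the elementary maximization of $\beta\alpha^2$ under $\alpha^2+\tfrac14\beta^2=y$, whose maximum is $\tfrac{4}{3\sqrt3}y^{3/2}$. We then check via $C_6^{-6}=\tfrac43H(\mathbf W)$ that the constants match. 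If the constants do not line up exactly, we would adjust $c_0$ (it is only asserted to exist). Note that the mass term enters $y$ with weights $c_0$ on both components rather than $c_0/4$ on the second, which only helps.

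\emph{Continuity argument.} Define $f(y)=y-C_*y^{3/2}$. Then $E+c_0M\ge f(y)$, and $f$ attains its maximum $E(\mathbf W)$ at $y_*=3E(\mathbf W)$. Since $f(y(t))\le E(\mathbf u(t))+c_0M(\mathbf u(t))\le(1-\delta_0)E(\mathbf W)$, continuity of $t\mapsto y(t)$ (from $\mathbf u\in C(I;H^1)$) together with $y(0)<y_*$ keeps $y(t)$ on the left branch $\{y<y_*\}$ for all $t$.

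On that branch, writing $y=s^2y_*$, the inequality $3s^2-2s^3\le1-\delta_0$ forces $s\le1-\sqrt{\delta_0/3}$. This is an elementary computation: $1-3s^2+2s^3=(1-s)^2(1+2s)\ge\delta_0$ gives $(1-s)^2\ge\delta_0/3$. This yields \eqref{trapping}.

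Finally, \eqref{coercivity} follows on the same branch. There $C_*y^{1/2}\le\tfrac23$, so $E+c_0M\ge y-C_*y^{3/2}\ge\tfrac13y$.
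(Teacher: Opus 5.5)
Your proposal is correct and is the energy-trapping argument the paper invokes ("by a simple energy trapping") without writing it out: Hölder plus \eqref{eq:torus-sharp-sobolev} componentwise, the weighted maximization of $\beta\alpha^2$, and the continuity argument for $f(y)=y-C_*y^{3/2}$, where the factorization $1-3s^2+2s^3=(1-s)^2(1+2s)$ gives \eqref{trapping} and $C_*y^{1/2}<\tfrac23$ gives \eqref{coercivity}. The constants do match exactly: $C_6^3=(4E(\mathbf W))^{-1/2}$ gives $\tfrac{4}{3\sqrt3}C_6^3=\tfrac23(3E(\mathbf W))^{-1/2}=C_*$, so your fallback of adjusting $c_0$ is unnecessary, and it could not have fixed a mismatch in the leading constant anyway, since $c_0$ does not enter that constant.
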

For periodic data $\mathbf{v}$, we write
\[
L(\mathbf{v})=E(\mathbf{v})+c_0M(\mathbf{v}).
\]

\begin{definition}\label{Lambda}
For $0<L<E(\mathbf{W})$ and $\tau>0$, define
\begin{equation*}
\begin{split}
\Lambda(L,\tau)=\sup\big\{&\|\mathbf{u}\|_{Z(I)}:\ \mathbf{u}\text{ is a strong solution of \eqref{eq:system}},\ |I|\le\tau,\\
&E(\mathbf{u}(t))+c_0M(\mathbf{u}(t))<L,\\
&\|\nabla u_1(t)\|_2^2+\tfrac14\|\nabla u_2(t)\|_2^2+c_0M(\mathbf{u}(t))<3E(\mathbf{W}),\ t\in I\big\}.
\end{split}
\end{equation*}
Set
\begin{equation}\label{Emax}
\Lambda_0(L)=\lim_{\tau\downarrow0}\Lambda(L,\tau),\qquad
E_{\max}=\sup\{L\in(0,E(\mathbf{W})):\Lambda_0(L)<\infty\}.
\end{equation}
\end{definition}
By the definition,
\[
\Lambda(L,\tau+\sigma)\le\Lambda(L,\tau)+\Lambda(L,\sigma).
\]
Consequently,
\[
\Lambda_0(L)<\infty
\quad\Longleftrightarrow\quad
\Lambda(L,\tau)<\infty\quad\hbox{for every }\tau>0.
\]

\begin{theorem}\label{global-critical}
The number in \eqref{Emax} satisfies $E_{\max}=E(\mathbf{W})$.
\end{theorem}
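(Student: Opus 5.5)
We argue by contradiction, following the Ionescu--Pausader scheme. Small-data theory (\Cref{prop:local-theory-h1} together with \Cref{Yembedding} and the coercivity \eqref{coercivity}) shows $\Lambda_0(L)<\infty$ for $L$ small, so $E_{\max}>0$. Suppose $E_{\max}<E(\mathbf W)$. Then there are $L_k\downarrow E_{\max}$ (or $L_k=E_{\max}$ fixed), intervals $I_k=(-T_k,T^k)$ with $|I_k|\to0$, and strong solutions $\mathbf u_k$ on $I_k$ satisfying the constraints in \Cref{Lambda} with $\|\mathbf u_k\|_{Z(I_k)}\to\infty$. By \Cref{trap}, $\|\mathbf u_k(0)\|_{H^1}\lesssim 1$. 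We apply \Cref{decomp} to $\mathbf f_k=\mathbf u_k(0)$ to obtain $\mathbf g$, Euclidean profiles $\widetilde{\boldsymbol\psi}^\alpha_{\mathcal O^\alpha_k}$ and remainders $\mathbf R^J_k$. The Pythagorean identities \eqref{gradient-pythagoras} and \eqref{energy-decoupling}, combined with the coercivity \eqref{coercivity} and the sharp Sobolev inequality \eqref{eq:torus-sharp-sobolev}, imply that every piece has nonnegative modified energy bounded by $E_{\max}$ and lies below the kinetic threshold $3E(\mathbf W)$.

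The next step is to show that exactly one profile is nontrivial and carries all the energy. If some piece carried energy $\ge\eta>0$ while all others also remained strictly below $E_{\max}-\eta$, we would build an approximate solution
\[
\mathbf U^{app}_k=\mathbf G+\sum_{\alpha\le J}\mathbf U^\alpha_k+e^{it\Delta}\mathbf R^J_k .
\]
Here $\mathbf G$ is the scale-one nonlinear solution; by the definition of $E_{\max}$ it has finite $Z$ norm on short intervals, since $L(\mathbf g)<E_{\max}$. The profiles $\mathbf U^\alpha_k$ come from \Cref{prop:euclidean-profiles}, whose hypotheses follow from the decoupling: $E_{\mathbb R^6}(\boldsymbol\psi^\alpha)<E(\mathbf W)$, $H(\boldsymbol\psi^\alpha)<H(\mathbf W)$, and, in the time-translated case, $H(\boldsymbol\psi^\alpha)<E(\mathbf W)$, using \Cref{sharp}. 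We then verify the hypotheses of \Cref{prop:stability-h1}. The $X^1$ bounds on $\mathbf U^{app}_k$ follow from \eqref{nonlinear-gradient-orth}, \eqref{nonlinear-scale-one-orth} and the summability of $\sum_\alpha\|\nabla\boldsymbol\psi^\alpha\|^2$. The initial error vanishes by construction. The main task is to show $\|\mathbf e_k\|_{N^1(I_k)}\to0$ for
\[
\mathbf e_k=F(\mathbf U^{app}_k)-\mathbf G\text{-part}-\sum_\alpha F(\mathbf U^\alpha_k).
\]
The error consists of cross terms $\mathbf U^\alpha_k\mathbf U^\beta_k$ with $\alpha\ne\beta$, and terms involving $e^{it\Delta}\mathbf R^J_k$.

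Controlling these cross terms in $N^1$ is the main obstacle. For the remainder terms we use \Cref{lem:nonlinear-estimates} in the form $\lesssim\|\cdot\|_{X^1}\|e^{it\Delta}\mathbf R^J_k\|_{Z'}$, with $Z'\le Z^{1/2}X^{1/2}$, and conclude from \eqref{remainder-small}. For the profile--profile interactions we decompose each $\mathbf U^\alpha_k$ via \Cref{euclidean}. Terms containing $\boldsymbol\rho$ or $\boldsymbol\omega^{\pm\infty}$ are small in $X^1$ or in $Z'$, so \Cref{lem:nonlinear-estimates} disposes of them, with a $\theta$-loss. For the remaining products of localized cores $\boldsymbol\omega^{\alpha,\theta}_k\boldsymbol\omega^{\beta,\theta}_k$, orthogonality of the frames leaves two possibilities. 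Either the supports $S_k^{\alpha,\theta}$ and $S_k^{\beta,\theta}$ are disjoint for large $k$, or the scales separate. When the scales separate, we estimate the product in $L^1_tH^1_x$ using the pointwise derivative bounds in \eqref{omega-bounds}: one factor is of size $N_\beta^2$ on a set whose measure is a small fraction, in $N_\beta/N_\alpha$, of the other's support. The scale-one interaction with $\mathbf G$ is handled similarly, using the smoothness of $\mathbf G$ on $I_0$. Stability then bounds $\|\mathbf u_k\|_{Z(I_k)}$ uniformly, a contradiction.

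Consequently $\mathbf g=0$, $J^*=1$ and $\mathbf R^1_k\to0$ in $H^1$. That is, $\mathbf u_k(0)=\widetilde{\boldsymbol\psi}_{\mathcal O_k}+o_{H^1}(1)$ for a single Euclidean profile with $E(\boldsymbol\psi)\le E_{\max}<E(\mathbf W)$ and subthreshold kinetic energy. The mass term disappears because $\|T_N\boldsymbol\psi\|_{L^2}\to0$. \Cref{prop:euclidean-profiles} then supplies a nonlinear solution $\mathbf U_k$ with $\|\mathbf U_k\|_{X^1([-\tau,\tau])}\le C_{\boldsymbol\psi}$, which relies on the Euclidean scattering result \Cref{thm:euclidean-scattering}. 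Applying \Cref{prop:stability-h1} with $\widetilde{\mathbf u}=\mathbf U_k$ gives $\|\mathbf u_k\|_{Z(I_k)}\lesssim\|\mathbf u_k\|_{X^1(I_k)}\le C$ for $k$ large, contradicting the blow-up of the $Z$ norm. Hence $E_{\max}=E(\mathbf W)$.
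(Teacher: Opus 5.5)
Your architecture matches the paper's. You use a critical sequence on intervals with $|I_k|\to0$, then \Cref{decomp}, then \Cref{prop:euclidean-profiles} for the Euclidean profiles, then the approximate solution $\mathbf G+\sum_\alpha\mathbf U^\alpha_k+e^{it\Delta}\mathbf R^J_k$, and finally \Cref{prop:stability-h1}. Your error sketch (remainder terms through $Z'$, then the splitting of \Cref{euclidean} into small pieces and localized cores) is the Ionescu--Pausader argument that the paper delegates to Yu--Yue.

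Two steps need repair. First, the uniform bound on the approximate solution. \Cref{prop:stability-h1} needs $\|\mathbf U^{app}_k\|_{Z(I_k)}$ bounded uniformly in $J$. Your remainder estimate likewise needs $\|\mathbf G+\sum_{\alpha\le J}\mathbf U^\alpha_k\|_{X^1(I_k)}$ bounded uniformly in $J$. The nonlinear orthogonality \eqref{nonlinear-gradient-orth}--\eqref{nonlinear-scale-one-orth} is a fixed-time statement. It controls $\sup_t\|\cdot\|_{H^1}$ of the sum, but not the $U^2$-based $X^1$ norm or the $Z$ norm. The triangle inequality only yields $\sum_\alpha\|\mathbf U^\alpha_k\|_{X^1}$, which need not converge when $J^*=\infty$. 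The missing ingredient is the paper's \Cref{claim-sum}. Pick $A$ so that the profiles with $\alpha>A$ are small. For these, small-data theory gives $\mathbf U^\alpha_k=e^{it\Delta}\widetilde{\boldsymbol\psi}^\alpha_k+O_{X^1}\bigl(\|\boldsymbol\psi^\alpha\|_{\dot H^1}^2\bigr)$, and the quadratic errors are summable. The linear part satisfies $\bigl\|e^{it\Delta}\sum_{\alpha>A}\widetilde{\boldsymbol\psi}^\alpha_k\bigr\|_{X^1(I_k)}\le\bigl\|\sum_{\alpha>A}\widetilde{\boldsymbol\psi}^\alpha_k\bigr\|_{H^1}$, which is controlled by the \emph{linear} orthogonality \eqref{linear-orth}. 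The finitely many profiles with $\alpha\le A$ are bounded one at a time.

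Second, your case analysis does not close. Your multi-profile step uses an energy gap in only one place: to control $\mathbf G$ through the definition of $E_{\max}$, which requires $L(\mathbf g)<E_{\max}$. The Euclidean profiles need only the Euclidean thresholds. So a single Euclidean profile carrying all of $E_{\max}$ is already handled by that same construction. Your concluding ``$\mathbf g=0$, $J^*=1$'' is therefore not justified. It silently discards the case of no profile at all. More importantly, it discards the case where $\mathbf g$ alone carries $E_{\max}$, which your multi-profile step cannot treat.

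Both cases are easy, and the paper handles them as Cases~1 and~3. With no profile, $\|e^{it\Delta}\mathbf R^0_k\|_{Z'(I_k)}\to0$ and \Cref{prop:local-theory-h1} bounds $\mathbf u_k$ in $X^1(I_k)$. For any fixed $\mathbf g\in H^1$, the local solution $\mathbf G$ exists with finite $X^1$ norm on a fixed neighbourhood of $0$, which contains $I_k$ for large $k$. This observation also shows the induction hypothesis is never needed: the paper's Case~4 treats any configuration of profiles using only local theory for $\mathbf G$ and Euclidean scattering for the $\mathbf U^\alpha_k$.

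Finally, $\mathbf G$ is only an $H^1$ solution. In the $\mathbf G$--core interactions you should approximate $\mathbf g$ by smooth data and use \Cref{prop:stability-h1}, rather than appeal to smoothness of $\mathbf G$.
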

\begin{proof}
\medskip\noindent\textit{Step 1.}
Suppose, to the contrary, that
\begin{equation}\label{critical-assume}
0<E_{\max}<E(\mathbf{W}).
\end{equation}
Choose $E_{\max}<L_k<E(\mathbf{W})$ with $L_k\downarrow E_{\max}$.
By definition, there exists a sequence of $\mathbf{u}_k$ on intervals $I_k$ such that
\begin{equation}\label{critical-sequence}
\begin{gathered}
I_k=[-T_k,T^k],\qquad T_k+T^k\le k^{-1},\\
L(\mathbf{u}_k(0))\longrightarrow E_{\max},\quad
\|\mathbf{u}_k\|_{Z([-T_k,0])}\longrightarrow\infty,\quad
\|\mathbf{u}_k\|_{Z([0,T^k])}\longrightarrow\infty.
\end{gathered}
\end{equation}

Choose $\delta_0\in(0,1)$ such that
$E_{\max}<(1-\delta_0)E(\mathbf{W})$. For all sufficiently large $k$,
$L(\mathbf{u}_k(0))\le(1-\delta_0)E(\mathbf{W})$. \Cref{trap}, with
$1-\delta=(1-\sqrt{\delta_0/3})^2<1$, yields
\begin{equation}\label{critical-kinetic}
\sup_{k,t\in I_k}
\bigl(\|\nabla u_{k,1}(t)\|_2^2
 +\tfrac14\|\nabla u_{k,2}(t)\|_2^2+c_0M(\mathbf{u}_k(t))\bigr)
\le(1-\delta)3E(\mathbf{W}).
\end{equation}
In particular, $\mathbf{u}_k(0)$
is bounded in $H^1$ space.

\medskip\noindent\textit{Step 2.}
Apply \Cref{decomp} to $\mathbf{u}_k(0)$ and $I_k$.
After passing to a subsequence, for each fixed finite $J$,
\begin{equation}\label{critical-decomp}
\mathbf{u}_k(0)=\mathbf{g}+\sum_{\alpha=1}^J\widetilde{\boldsymbol{\psi}}_k^\alpha+\mathbf{R}_k^J,
\quad
\widetilde{\boldsymbol{\psi}}_k^\alpha
 =\Pi_{t_k^\alpha,x_k^\alpha}T_{N_k^\alpha}\boldsymbol{\psi}^\alpha,\quad
\lim_{J\to J^*}\limsup_{k\to\infty}
\|e^{it\Delta}\mathbf{R}_k^J\|_{Z(I_k)}=0.
\end{equation}
By
\eqref{Zembedding}, 
\begin{equation}\label{mass-to-scale-one}
M(\widetilde{\boldsymbol{\psi}}_k^\alpha)
\lesssim(N_k^\alpha)^{-1}\|\boldsymbol{\psi}^\alpha\|_{\dot H^1}^2\longrightarrow0,
\end{equation}
thus through a simple computation, we get
\begin{align}
\|\nabla u_{k,1}(0)\|_2^2+&\frac14\|\nabla u_{k,2}(0)\|_2^2+c_0M(\mathbf{u}_k(0))
=\|\nabla g_1\|_2^2+\frac14\|\nabla g_2\|_2^2+c_0M(\mathbf{g})\notag \\
& +\sum_{\alpha=1}^J\bigl(\|\nabla\psi_1^\alpha\|_2^2
                  +\frac14\|\nabla\psi_2^\alpha\|_2^2\bigr)
 +\|\nabla R_{k,1}^J\|_2^2
 +\frac14\|\nabla R_{k,2}^J\|_2^2+c_0M(\mathbf{R}_k^J)+o_k(1).
\label{corrected-kinetic-splitting}
\end{align}
Therefore, by \eqref{critical-kinetic}, for large $k$,
\[
\begin{gathered}
H(\mathbf g)+c_0M(\mathbf g)<3E(\mathbf W),\\
H(\boldsymbol\psi^\alpha)<3E(\mathbf W),\\
H(\mathbf R_k^J)+c_0M(\mathbf R_k^J)<3E(\mathbf W)
\end{gathered}
\]
Thus the sharp inequality \eqref{eq:torus-sharp-sobolev} gives
\begin{equation}\label{profile-corrected-coercivity}
L(\mathbf{v})\ge\frac13\bigl(\|\nabla v_1\|_2^2
 +\frac14\|\nabla v_2\|_2^2+c_0M(\mathbf{v})\bigr)\ge0
\end{equation}
for $\mathbf{v}=\mathbf{g}$ and, for fixed $J$ and large $k$, $\mathbf{v}=\mathbf{R}_k^J$.
Consequently, \eqref{energy-decoupling} yields
\begin{equation}\label{energy-sum}
L(\mathbf{g})+\sum_{\alpha=1}^JL(\alpha)+\lim_{k\to\infty}L(\mathbf{R}_k^J)=E_{\max},
\end{equation}
where all contributions are nonnegative and
\begin{equation}\label{Lalpha}
L(\alpha):=\lim_{k\to \infty} L(\widetilde{\boldsymbol{\psi}}_k^\alpha)
=\begin{cases}
E_{\mathbb R^6}(\boldsymbol{\psi}^\alpha),&t_k^\alpha=0,\\
\|\nabla\psi_1^\alpha\|_2^2+\tfrac14\|\nabla\psi_2^\alpha\|_2^2,
&|(N_k^\alpha)^2t_k^\alpha|\to\infty.
\end{cases}
\end{equation}
By \eqref{corrected-kinetic-splitting},\eqref{profile-corrected-coercivity},\eqref{energy-sum} and \eqref{Lalpha}, we notice that for $t^{\alpha}_k=0$
\begin{equation}\label{euclidean-inherit}
E_{\mathbb R^6}(\boldsymbol{\psi}^\alpha)\le E_{\max}<E(\mathbf{W}),\qquad
\|\nabla\psi_1^\alpha\|_2^2+\tfrac14\|\nabla\psi_2^\alpha\|_2^2<3E(\mathbf{W}),
\end{equation}
and for $|(N^{\alpha}_k)^2t^{\alpha}_k|\to \infty$
\begin{equation}\label{wave-inherit}
\|\nabla\psi_1^\alpha\|_2^2+\tfrac14\|\nabla\psi_2^\alpha\|_2^2
=L(\alpha)\le E_{\max}<E(\mathbf{W}).
\end{equation}
Hence apply \Cref{prop:euclidean-profiles} to every Euclidean
profile. Let $\mathbf{G}(0)=\mathbf{g}$ and $\mathbf{U}_k^\alpha(0)=\widetilde{\boldsymbol{\psi}}_k^\alpha$
be the corresponding periodic solutions.

\medskip\noindent\textit{Step 3.} We consider following four cases.

\medskip\noindent\textit{Case 1: No profiles.}
Here $\mathbf{g}=0$, $J^*=0$, and $\mathbf{u}_k(0)=\mathbf{R}_k^0$. Since
\[
\|e^{it\Delta}\mathbf{R}_k^0\|_{Z'(I_k)}
=\bigl(\|e^{it\Delta}\mathbf{R}_k^0\|_{Z(I_k)}
       \|e^{it\Delta}\mathbf{R}_k^0\|_{X^1(I_k)}\bigr)^{1/2}\longrightarrow0,
\]
So \[
\|\mathbf{u}_k\|_{Z(I_k)}\lesssim \|\mathbf{u}_k\|_{X^1(I_k)}\lesssim 1
\]by the local theory, this contradicts \eqref{critical-sequence}.

\medskip\noindent\textit{Case 2: Exactly one Euclidean profile.}
Suppose that $\mathbf{g}=0$ and $J^*=1$. By
\Cref{decomp}, we have
\[
\mathbf{u}_k(0)=\widetilde{\boldsymbol{\psi}}_k^1+\mathbf{R}_k^1,
\qquad
\sup_k\|\mathbf{R}_k^1\|_{H^1}<\infty,
\qquad
\|e^{it\Delta}\mathbf{R}_k^1\|_{Z(I_k)}\longrightarrow0.
\]
The linear estimate in \Cref{prop-pre-Xs-Hs}
and \eqref{eq:z-prime-norm} imply
\[
\|e^{it\Delta}\mathbf{R}_k^1\|_{X^1(I_k)}
\lesssim\|\mathbf{R}_k^1\|_{H^1}\le C,
\qquad
\|e^{it\Delta}\mathbf{R}_k^1\|_{Z'(I_k)}\longrightarrow0.
\]

Let $\mathbf{U}_k^1$ be the periodic nonlinear solution with initial
data $\mathbf{U}_k^1(0)=\widetilde{\boldsymbol{\psi}}_k^1$. The threshold conditions
\eqref{euclidean-inherit} and \eqref{wave-inherit}
allow us to apply \Cref{prop:euclidean-profiles}.
Thus there exist $\tau>0$ and $C<\infty$, independent of $k$,
such that, for all sufficiently large $k$, $
\|\mathbf{U}_k^1\|_{X^1((-\tau,\tau))}\le C$.
Since $0\in I_k$ and $|I_k|\to0$, we have
$I_k\subset(-\tau,\tau)$ for large $k$, and consequently
$\|\mathbf{U}_k^1\|_{X^1(I_k)}\le C$.

Set $
\mathbf{U}_{\mathrm{app},k}:=\mathbf{U}_k^1+e^{it\Delta}\mathbf{R}_k^1$, 
then $
\mathbf{U}_{\mathrm{app},k}(0)=\mathbf{u}_k(0)$ and 
$\|\mathbf{U}_{\mathrm{app},k}\|_{X^1(I_k)}\le C.$
Let $\mathbf{U}_{\mathrm{app},k}$ be the solution of \eqref{error}, so the error is
\begin{align*}
\mathbf{e}_k
:=(i\partial_t+\Delta)\mathbf{U}_{\mathrm{app},k}
       +F(\mathbf{U}_{\mathrm{app},k}) =F(\mathbf{U}_k^1+e^{it\Delta}\mathbf{R}_k^1)-F(\mathbf{U}_k^1).
\end{align*}
By \Cref{lem:nonlinear-estimates},
\begin{align*}
\|\mathbf{e}_k\|_{N(I_k)}
\lesssim
\bigl(\|\mathbf{U}_k^1\|_{Z'(I_k)}
      +\|e^{it\Delta}\mathbf{R}_k^1\|_{Z'(I_k)}\bigr)
       \|e^{it\Delta}\mathbf{R}_k^1\|_{Z'(I_k)} \longrightarrow0,
\end{align*}
where
$\|\mathbf{U}_k^1\|_{Z'(I_k)}
 \lesssim\|\mathbf{U}_k^1\|_{X^1(I_k)}\le C$
by \Cref{cor-strichartz}, \eqref{pre:Zformula} and \eqref{eq:z-prime-norm}.

Moreover, 
\[
\|\mathbf{U}_{\mathrm{app},k}\|_{Z(I_k)}
+\|\mathbf{U}_{\mathrm{app},k}\|_{L_t^\infty H_x^1(I_k)}
\le M.
\]
So by 
\Cref{prop:local-theory-h1},
\[
\|\mathbf{u}_k-\mathbf{U}_{\mathrm{app},k}\|_{X^1(I_k)}
\longrightarrow0.
\]
It follows that
\[
\|\mathbf{u}_k\|_{Z(I_k)}
\lesssim\|\mathbf{u}_k\|_{X^1(I_k)}
\le\|\mathbf{U}_{\mathrm{app},k}\|_{X^1(I_k)}+o_k(1)
\le C+o_k(1),
\]
contradicting \eqref{critical-sequence}.

\medskip\noindent\textit{Case 3: Exactly one scale-one profile.}
Here $\mathbf{g}\ne0$ and $J^*=0$. The local solution $\mathbf{G}$ exists on a
fixed neighborhood containing $I_k$ for large $k$. Applying the similar
argument of Case~2 to $\mathbf{G}+e^{it\Delta}\mathbf{R}_k^0$ gives the same
contradiction.

\medskip\noindent\textit{Case 4: Multiple profiles.}
Write $\mathbf{U}_k^0=\mathbf{G}$ and
$\mathbf{U}_{\mathrm{prof},k}^J=\sum_{\alpha=0}^J\mathbf{U}_k^\alpha$.

\begin{lemma}\label{claim-sum}
There exists $Q<\infty$, independent of $J$, such that
\begin{equation}\label{claim-bound}
\sup_{J<\infty} \limsup_{k\to\infty}
\left(
\|\mathbf{U}_{\mathrm{prof},k}^J\|_{X^1(I_k)}^2
+\sum_{\alpha=0}^J\|\mathbf{U}_k^\alpha\|_{X^1(I_k)}^2
\right)\le Q^2.
\end{equation}

\end{lemma}

\begin{proof}
By \eqref{gradient-pythagoras},
$\sum_{\alpha=1}^{J^*}\|\boldsymbol{\psi}^\alpha\|_{\dot H^1}^2<\infty$.
Choose $A$ so that every profile with $\alpha>A$ satisfied $\|\boldsymbol{\psi}^\alpha\|_{\dot H^1}^2 \le \varepsilon$ for small $\varepsilon$.
So for $\alpha>A$, \Cref{prop:local-theory-h1} gives
\begin{align}
\limsup_k\|\mathbf{U}_k^\alpha\|_{X^1(I_k)}
&\lesssim\|\boldsymbol{\psi}^\alpha\|_{\dot H^1},
\label{smallprofile}\\
\limsup_k
\|\mathbf{U}_k^\alpha-e^{it\Delta}\widetilde{\boldsymbol{\psi}}_k^\alpha\|_{X^1(I_k)}
&\lesssim\|\boldsymbol{\psi}^\alpha\|_{\dot H^1}^2.
\label{smallcorrection}
\end{align}
And for $\alpha < J$, by \Cref{prop:euclidean-profiles}, $\sum_{\alpha=0}^{A}
\limsup_k\|\mathbf U_k^\alpha\|_{X^1(I_k)}^2<\infty.$
By triangle inequality,
\begin{align*}
&\quad \limsup_k\|\mathbf{U}_{\mathrm{prof},k}^J\|_{X^1(I_k)}\\
&\le\sum_{\alpha=0}^A\limsup_k\|\mathbf{U}_k^\alpha\|_{X^1(I_k)} +\sum_{\alpha=A+1}^J\limsup_k
\|\mathbf{U}_k^\alpha-e^{it\Delta}\widetilde{\boldsymbol{\psi}}_k^\alpha\|_{X^1(I_k)}
+\limsup_k\left\|e^{it\Delta}
\sum_{\alpha=A+1}^J\widetilde{\boldsymbol{\psi}}_k^\alpha\right\|_{X^1(I_k)}\\
&\lesssim\sum_{\alpha=0}^A\limsup_k\|\mathbf{U}_k^\alpha\|_{X^1(I_k)}
+\sum_{A<\alpha\le J^*}\|\boldsymbol{\psi}^\alpha\|_{\dot H^1}^2 +\left(\sum_{A<\alpha\le J^*}
\|\boldsymbol{\psi}^\alpha\|_{\dot H^1}^2\right)^{1/2}\\
&< \infty.
\end{align*}
Similarly,
\[
\limsup_k\sum_{\alpha=0}^J\|\mathbf{U}_k^\alpha\|_{X^1(I_k)}^2
\lesssim
\sum_{\alpha=0}^A\limsup_k\|\mathbf{U}_k^\alpha\|_{X^1(I_k)}^2
+\sum_{A<\alpha\le J^*}\|\boldsymbol{\psi}^\alpha\|_{\dot H^1}^2< \infty.
\]
Choosing $Q$ sufficiently large proves the lemma.
\end{proof}
Define
\begin{equation}\label{nonlinear-app}
\mathbf{U}_{\mathrm{app},k}^J=\mathbf{U}_{\mathrm{prof},k}^J+e^{it\Delta}\mathbf{R}_k^J,
\qquad
\mathbf{e}_k^J=(i\partial_t+\Delta)\mathbf{U}_{\mathrm{app},k}^J+F(\mathbf{U}_{\mathrm{app},k}^J).
\end{equation}
Then
\begin{equation}\label{exact-initial}
\mathbf{U}_{\mathrm{app},k}^J(0)=\mathbf{u}_k(0),\qquad
\mathbf{e}_k^J=F(\mathbf{U}_{\mathrm{prof},k}^J+e^{it\Delta}\mathbf{R}_k^J)
 -\sum_{\alpha=0}^JF(\mathbf{U}_k^\alpha).
\end{equation}
By \Cref{claim-sum} and embedding, we have 
\begin{equation}\label{uniform-approximation-bound}
\sup_J\limsup_k\left(
\|\mathbf{U}_{\mathrm{app},k}^J\|_{Z(I_k)}
+\|\mathbf{U}_{\mathrm{app},k}^J\|_{L_t^\infty H_x^1(I_k)}\right)\lesssim  Q.
\end{equation}

\begin{lemma}\label{residual}
The approximate solutions satisfy
\begin{equation}\label{residual-limit}
\lim_{J\to J^*}\limsup_k\|\mathbf{e}_k^J\|_{N(I_k)}=0.
\end{equation}
\end{lemma}
\begin{proof}
      This is similar to proof of Lemma 6.4 in \cite{YY} 
\end{proof}

We now apply \Cref{prop:stability-h1} with $\varepsilon(Q)$.
By \eqref{residual-limit}, we can fix a finite $J$ such that
\[
\limsup_{k\to\infty}\|\mathbf{e}_k^J\|_{N(I_k)}
\lesssim \varepsilon(Q).
\]
If $J^*<\infty$, take $J=J^*$. With this $J$ fixed,
\eqref{uniform-approximation-bound} and \eqref{residual} imply that, for all sufficiently large $k$,
\[
\|\mathbf{U}_{\mathrm{app},k}^J\|_{Z(I_k)}
+\|\mathbf{U}_{\mathrm{app},k}^J\|_{L_t^\infty H_x^1(I_k)}
\lesssim Q,
\qquad
\|\mathbf{e}_k^J\|_{N(I_k)}\lesssim \varepsilon(Q).
\]
Moreover, \eqref{exact-initial} gives
\[
\|\mathbf{U}_{\mathrm{app},k}^J(0)-\mathbf{u}_k(0)\|_{H^1}=0.
\]
By \Cref{prop:stability-h1}, we have 
\[
\|\mathbf{u}_k-\mathbf{U}_{\mathrm{app},k}^J\|_{X^1(I_k)}
\lesssim \varepsilon Q,
\qquad
\|\mathbf{u}_k\|_{X^1(I_k)}\lesssim  Q.
\]
In particular,  \Cref{Yembedding} yield $
\|\mathbf{u}_k\|_{Z(I_k)}\lesssim Q,$
this contradicts
$\|\mathbf{u}_k\|_{Z(I_k)}\to\infty$ in
\eqref{critical-sequence}. Hence Case~4 is impossible,
and therefore $E_{\max}=E(\mathbf{W})$.
\end{proof}

\begin{proof}[Proof of \Cref{thm:main}]
It is easy to see that \Cref{global-critical} implies \Cref{thm:main}.
\end{proof}

\vspace{5mm}
\end{document}